\documentclass{amsart}

\usepackage{amsmath, amssymb, amsbsy}
\usepackage{amsthm}
\usepackage{thmtools}
\usepackage{array}
\usepackage{graphpap, color, paralist,xcolor}
\usepackage[mathscr]{eucal}
\usepackage[pdftex]{graphicx}
\usepackage[pdftex,colorlinks,backref=page,citecolor=blue]{hyperref}
\usepackage{pifont}
\usepackage{tikz, tcolorbox}
\tikzset{sdot/.style = {fill, circle, inner sep = 1.25pt}}
\usepackage{mathtools}
\usepackage{cleveref}
\usepackage{float}
\usepackage{crossreftools}

\newcounter{bullet}

\usepackage{setspace}
\setdisplayskipstretch{1}
\newenvironment{subproof}[1][\proofname]{
  
  \begin{proof}[#1]
}{
  \end{proof}
}
\usepackage{geometry}
\usepackage{comment}
\newtheorem{thm}{Theorem}[section]

\newtheorem*{prop*}{Proposition}
\newtheorem{cor}[thm]{Corollary}
\newtheorem{lem}[thm]{Lemma}
\newtheorem*{lem*}{Lemma}

\theoremstyle{definition}
\newtheorem{mydef}[thm]{Definition}
\newtheorem{claim}[thm]{Claim}

\newtheorem{remark}[thm]{Remark}
\newtheorem{obs}[thm]{Observation}

\newcommand{\PP}{\mathbb{P}}
\newcommand{\RR}{\mathbb{R}}
\newcommand{\ZZ}{\mathbb{Z}}
\newcommand{\EE}{\mathbb{E}}
\newcommand{\Z}{\mathbb{Z}}

\newcommand{\cO}{\mathcal{O} }
\newcommand{\cA}{\mathcal{A} }
\newcommand{\cB}{\mathcal{B} }

\newcommand{\cE}{\mathcal{E} }
\newcommand{\cF}{\mathcal{F} }
\newcommand{\cG}{\mathcal{G} }
\newcommand{\cH}{\mathcal{H} }

\newcommand{\cU}{\mathcal{U} }
\newcommand{\cV}{\mathcal{V} }
\newcommand{\cW}{\mathcal{W} }

\newcommand{\qd}{\mathrm{quad}}

\newcommand{\md}{\mathrm{mid}}

\newcommand{\beq}[1]{\begin{equation}\label{#1}}
\newcommand{\enq}[0]{\end{equation}}

\newcommand{\gd}[0]{\delta }

\newcommand{\sub}[0]{\subseteq}

\newcommand{\Hom}[0]{\mbox{\rm{Hom}}}

\newcommand{\dist}[0]{\mbox{\rm{dist}}}
\newcommand{\diam}[0]{\mbox{\rm{diam}}}

\newenvironment{proof*}[1][\proofname]{
  
  \begin{proof}[#1]}{\end{proof}}

\begin{document}

\title{Range of random $\mathbb Z$-homomorphisms on weak expanders}

\author[D. Dong]{Dingding Dong}
\address{Department of Mathematics, California Institute of Technology}
\email{ddong124@caltech.edu}

\author[J. Park]{Jinyoung Park}
\address{Department of Mathematics, Courant Institute of Mathematical Sciences, New York University}
\email{jinyoungpark@nyu.edu}

\begin{abstract}
We prove that random $\mathbb{Z}$-homomorphisms on weakly expanding bipartite graphs exhibit a strong ``flatness" phenomenon. Extending prior work of Peled, Samotij, and Yehudayoff for expanders, we first show that on any bipartite $(n, d, \lambda)$-graph with $\lambda \le (1-\delta)d$, a uniformly chosen $\mathbb{Z}$-homomorphism has a range at most $O(\log \log n)$ with high probability, which is tight up to a constant factor. This provides an affirmative answer to their question in the spectral setting. As a concrete application, we prove that a random $\mathbb{Z}$-homomorphism on the middle layers of the Hamming cube takes at most $5$ values with high probability. This shows that the $O(1)$-flatness for the full Hamming cube, proved by Kahn and Galvin, persists even when the rigid structural properties are relaxed.
\end{abstract}

\maketitle

\section{Introduction}

Discrete height functions, particularly integer-valued Lipschitz functions and $\mathbb Z$-homomor\-phisms on graphs, are widely studied at the intersection of combinatorics, probability, and statistical mechanics. The primary focus of this paper is the ``flatness'' phenomenon of random $\mathbb{Z}$-homomorphisms on weak expanders. Given a graph $\Gamma$, a \textit{$\mathbb{Z}$-homomorphism} is a function $h:V(\Gamma) \rightarrow \mathbb Z$ such that adjacent vertices receive adjacent integer values (i.e., $|h(u)-h(v)|=1$ for all edges $\{u,v\}$). Clearly, such a function can only exist if the underlying graph $\Gamma$ is bipartite. This framework provides a natural generalization of the simple random walk on $\Z$; indeed, an $n$-step simple random walk corresponds exactly to a uniformly random $\Z$-homomorphism from a path of length $n$ to $\Z$. Extending this probabilistic perspective, a uniformly random $\mathbb Z$-homomorphism from a general bipartite graph $\Gamma$, rooted by mapping a fixed vertex $v_0$ to $0$, is also called a \emph{$\Gamma$-indexed random walk} (see, e.g.,~\cite{benjamini2000random,benjamini1994tree}).

Formally, let $\Gamma=(V, E)$ be a connected bipartite graph with vertex bipartition $V=\cE\cup\cO$. For a distinguished vertex $v_0\in V$, we define
\[
\Hom_{v_0}(\Gamma) := \{h:V \rightarrow \mathbb Z \mid h(v_0)=0 \text{ and } \{u, v\} \in E \implies |h(u)-h(v)|=1\}.
\]
We will call an element of $\Hom_{v_0}(\Gamma)$ a \textit{rooted $\mathbb Z$-homomorphism} from $\Gamma$ (with respect to $v_0$). Define the \textit{range} of $h \in \Hom_{v_0}(\Gamma)$ as
\[R(h)=|\{h(x):x \in V\}|.\]

\subsection{Main results}

Our first main result, \Cref{thm:Z-hom-spectral-expanders}, concerns the typical range of $\mathbb{Z}$-homomorphisms on bipartite spectral expanders, and is motivated by a question raised by Peled, Samotij, and Yehudayoff \cite{peled2013lipschitz}. As a slight variant of standard terminology, we define a \textit{bipartite $(n,d,\lambda)$-graph} to be a $d$-regular bipartite graph with $n$ vertices in each part, such that its second largest eigenvalue is at most $\lambda$.

In \cite{peled2013lipschitz}, Peled, Samotij, and Yehudayoff investigated the typical range of integer-valued Lipschitz functions and $\mathbb Z$-homomorphisms on expanders. Focusing on their results for $\mathbb{Z}$-homomorphisms, we note that they proved that if the underlying graph is a sufficiently strong expander---meaning the parameter $\lambda$ is much smaller than the degree $d$, specifically $\lambda \le \frac{d}{300\log d}$---then a typical $\mathbb{Z}$-homomorphism takes only $O \left( \frac{\log \log n}{\log (d/\lambda)} \right)$ values with high probability.\footnote{To be precise, the notion of expanders used in \cite{peled2013lipschitz} was defined via the Expander Mixing Lemma (EML) \cite{alon1988explicit}: they defined a $d$-regular bipartite graph $\Gamma$ with vertex bipartition $\cE \cup \cO$ such that $|\cE|=|\cO|=n$ to be a \textit{$\lambda$-expander} if $\left|e(S,T)-\frac{d}{n}|S||T|\right|\le\lambda \sqrt{|S||T|} \quad \text{for all $S \subseteq \cE, T \subseteq \cO$}$. While the EML condition is sufficient in their strong expansion regime ($\lambda \ll d$), our focus on the much weaker expansion regime (where $\lambda$ is close to $d$) necessitates the standard algebraic definition of (bipartite) $(n,d,\lambda)$-graphs. See \Cref{rmk:expander} for more explanation.} 

To understand this bound, it is helpful to compare it with the trivial upper bound. It is a standard fact that the diameter of a bipartite $(n,d,\lambda)$-graph is $O\big( \frac{\log n}{\log (d/\lambda)} \big)$. Since the values of a $\mathbb{Z}$-homomorphism change by exactly $1$ along any edge, the range of \textit{any} $\mathbb{Z}$-homomorphism is deterministically bounded by the diameter of the graph (plus one). Thus, the result of \cite{peled2013lipschitz} shows a strong ``flatness'' phenomenon: the typical range is double-logarithmic in $n$, which is much smaller than the logarithmic diameter. Furthermore, this upper bound is tight: an earlier result by Benjamini, Yadin, and Yehudayoff \cite{benjamini2007random} established a matching lower bound, proving that the typical range is indeed $\Omega\big( \frac{\log \log n}{\log (d/\lambda)} \big)$.

Following their result, Peled, Samotij, and Yehudayoff naturally asked whether a similar flatness phenomenon continues to hold for graphs with a much smaller spectral gap (expansion), such as when $\lambda = (1-\delta)d$ for small $\delta > 0$. Our first main theorem answers this question affirmatively:

\begin{thm}\label{thm:Z-hom-spectral-expanders}
For every $\delta>0$, there exist constants $d_0=d_0(\delta)>0$ and $C=C(\delta)>0$ such that the following holds. Let $\Gamma$ be a bipartite $(n,d,\lambda)$-graph with vertex bipartition $V(\Gamma)=\cE\cup \cO$ such that $d \ge d_0$ and $\lambda\le (1-\delta)d$. Fix any $v_0\in \cE$. Let $\mathbf h$ be a random $\mathbb Z$-homomorphism chosen uniformly from $\Hom_{v_0}(\Gamma)$. Then, with probability $1-o_n(1)$,
\[
R(\mathbf h)\leq C\log\log n.
\]
\end{thm}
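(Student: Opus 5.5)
We argue by induction on levels, following the strategy of Peled--Samotij--Yehudayoff, with their strong-expansion input replaced by an isoperimetric estimate that holds for $\lambda\le(1-\delta)d$. For $k\ge 1$, let $A_k=\{v: \mathbf h(v)\ge k\}$, and let $L_k$ be the set of vertices in the parity class of $k$ with $\mathbf h(v)=k$. By the symmetry $h\mapsto -h$ it suffices to show that, with high probability, $\mathbf h\le C\log\log n$ everywhere. The key quantity is the density $p_k=\EE|A_k|/(2n)$. We want a recursion of the form
\[
p_{k+2}\le p_k^{1+c} \quad\text{or}\quad p_{k+2}\le \exp(-c\,d)\,p_k ,
\]
where $c=c(\delta)>0$. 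Iterating this doubly exponential decay from the level where $p_k\le 1-\delta'$ reaches $p_k<n^{-2}$ after $O(\log\log n)$ steps. A union bound then finishes the argument.

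\textbf{Step 1 (initial flatness).} First we show that some level $k_0=O(1)$ already has $p_{k_0}\le 1/2$ in a strong sense. We compare $h$ with $-h+$const. For a random homomorphism the level sets around the root are balanced, so the probability that the median value lies outside $[-O(1),O(1)]$ is small. One way to get this is via the shift-type transformation $T$, which takes a ``high region'' $A$ (a union of components of $\{h\ge k\}$) and replaces $h$ on $A$ by $2k-h$ or $h-2$.

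\textbf{Step 2 (the transformation and entropy count).} This is the main step. Condition on a level $k$ and an odd-even set $A$ with closed boundary, namely a component $A$ of $\{h\ge k\}$ whose external vertex boundary lies entirely in $L_{k-1}$. Shifting $h$ down by $2$ on $A$ produces homomorphisms in which the entire inner boundary of $A$ becomes free: each inner-boundary vertex can then take either of the two values $k-2$ or $k$. This gives at least $2^{|\partial A|}$ preimages-to-one, up to the cost of encoding $A$. The spectral condition $\lambda\le(1-\delta)d$ gives, via expander mixing for sets of density at most $1/2$, the bound $|N(S)|\ge (1+c\delta)|S|$. More importantly, it gives edge expansion $e(S,S^c)\ge c\delta d|S|$, so $|\partial A|$ is at least of order $\delta|A|$ in the appropriate weighted sense. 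To encode $A$ we use Sapozhenko-style graph containers for $2$-linked sets with small closure. In weak expanders the number of such sets of size $a$ with boundary $b$ is at most $2^{b-\Omega(b)}\cdot e^{O(a\log d/d)}$. This is the main obstacle, since the container lemma has to be rederived using only the spectral gap, not the cube's structure. We expect the argument of Galvin/Kahn, and of Jenssen--Keevash--Perkins for expanders, to adapt once $d\ge d_0(\delta)$ absorbs the polylog$(d)/d$ losses.

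\textbf{Step 3 (iteration).} Combining Steps 1--2 gives $\PP(v\in A_{k+2})\le \PP(v\in A_k)\cdot$(expected boundary penalty). Applying expander mixing to $A_k$ with density $p$ shows that most neighbors of $A_{k+2}$ lie in $A_k\setminus A_{k+2}$, and that $|A_{k+2}|\lesssim |A_k|^2/n$ up to constants. This yields $p_{k+2}\le Cp_k^{1+c}$. Starting from $p_{k_0}\le 1/2$, after $O(\log\log n)$ iterations we get $p_k\le n^{-3}$, and Markov's inequality plus the symmetric bound for low values gives $R(\mathbf h)\le C\log\log n$ with probability $1-o(1)$.
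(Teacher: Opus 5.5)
There is a genuine gap. Step 3, the mechanism that is supposed to produce doubly exponential decay, does not work. Step 1 assumes exactly the global ground state that is unavailable once $\lambda>d/2$.

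\textbf{Step 3.} The bound $|A_{k+2}|\lesssim |A_k|^2/n$ cannot come from expander mixing, because it is false deterministically. Put a single tent $h(v)=R-\mathrm{dist}(v,x)$ on a flat background, with $|A_k|=o(\sqrt n)$. Then $|A_k|^2/n=o(1)$, yet $A_{k+2}\neq\emptyset$. Separately, for sets of density at most $1/2$ the mixing error $\lambda\sqrt{|S||T|}$ is at least the main term $\frac dn|S||T|$ when $\lambda\ge d/2$, so mixing yields nothing quadratic in this regime.

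Your one-shift inequality $\mathbb P(v\in A_{k+2})\le\mathbb P(v\in A_k)\cdot(\text{penalty})$ is also too weak on its own. The penalty is governed by the component $K$ through $v$, and you only know $|N(K)|\ge d$. Iterating therefore gives at best $p_k\le 2^{-\Omega(kd/\log d)}$, which is a range bound of order $\log n\cdot\log d/d$, still logarithmic in $n$.

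What is missing is the reason the penalties compound. The paper fixes a vertex $x$ attaining the maximum and follows the \emph{nested} chain $K_0\subseteq K_1\subseteq\cdots$ of $2$-linked maximum components through $x$ in successive superlevel sets.
\begin{itemize}
\item Because $K_i$ is a maximum component, after merging it the next component satisfies $K_{i+1}\supseteq N^2(K_i)$.
\item The Tanner bound $|N(S)|\ge(1+\delta/4)|S|$ then gives $|K_i|\ge(1+\delta/4)^{2i}$.
\item Each merge multiplies the weight by $2^{|N(K_i)|-|B(K_i)|}$. The containers allow at most $2^{|N(K_i)|-|B(K_i)|-\Omega(t_1/\log d)}$ candidates, with $t_1=\Omega(\delta|N(K_i)|)$.
\item The whole chain therefore costs $2^{-\Omega((1+\delta/4)^{2\ell}/\log d)}$, which beats a union bound over the $n$ choices of $x$ once $\ell=C\log\log n$.
\end{itemize}
So the double exponential comes from geometric growth of the nested components combined with a cost exponential in their size, not from a density recursion.

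\textbf{Step 1.} The claims ``the median of $\mathbf h$ is $O(1)$'' and ``$p_{k_0}\le 1/2$ in a strong sense'' are precisely the Peled--Samotij--Yehudayoff ground-state input. That input does not follow from the spectral data when $\lambda>d/2$, and you give no argument for it. The symmetry $h\mapsto -h$ does give $p_1\le 1/2$, but only in expectation. That does not let you apply isoperimetry to the random components of $\{\mathbf h\ge k\}$, whose closures must actually have size at most about $3n/4$.

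The paper avoids any reference level with a deterministic pigeonhole. Let $f=\frac12 h|_{\mathcal E}$. If $R(h)>2t+3$, then either the top band $f^{-1}(\{m,\dots,m-t/3\})$ or the corresponding bottom band has at most $n/2$ vertices, and the nested-chain argument is run inside that band.

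Your Step 2, shifting a closed component and counting with Galvin-type containers, is the right local ingredient and matches the paper's merge together with Corollary~\ref{cor:H-spectral-expander}. Without the nested-chain growth, and without the pigeonhole in place of Step 1, the argument does not reach $O(\log\log n)$.
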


In \Cref{thm:Z-hom-spectral-expanders}, we state the result for a fixed constant $\delta>0$ for clarity of presentation. However, our argument still holds even if $\delta$ decays as a function of $d$ (i.e., $\delta = o_d(1)$). As a concrete application of our framework in the $\delta = o_d(1)$ regime, our second main result, \Cref{thm:Z-hom-middle-layers}, shifts the focus to a specific combinatorial structure: the middle layers of the Hamming cube. 

Our investigation in this setting is naturally motivated by two related lines of research. The first motivation is given by the study of the ``flatness'' of typical $\mathbb{Z}$-homomorphisms on the Hamming cube, $Q_d$. While the maximum possible range of a $\mathbb{Z}$-homomorphism on $Q_d$ is trivially $d+1$ (its diameter plus one), it was initially conjectured in \cite{benjamini2000random} that the typical range is sublinear, $o(d)$. Kahn \cite{kahn2001range} resolved this in a strong form, proving that the typical range is actually $O(1)$. Galvin \cite{galvin2003homomorphisms} later improved this constant, demonstrating that a random $\mathbb{Z}$-homomorphism on $Q_d$ takes at most $5$ values with high probability. These results---later vastly generalized to high-dimensional tori by Peled \cite{peled2017high}---show an extreme rigidity in the typical behavior of $\mathbb{Z}$-homomorphisms. Kahn's proof makes crucial use of the sophisticated structure of the Hamming cube, and Galvin's subsequent refinement builds on Kahn's earlier work.

Given the phenomenon explored by Peled, Samotij, and Yehudayoff \cite{peled2013lipschitz}, a natural question arises: is this extreme strong flatness (taking only at most 5 values) a consequence of the rigid structure of the Hamming cube, or does it still hold in more general settings? To answer this, we turn to a natural testbed: the middle layers of the Hamming cube. While this specific subgraph has attracted independent interest (e.g. \cite{balogh2025maximal,balogh2021independent,  duffus2013maximal, ilinca2013counting, li2023number}), for our present purposes, it provides a concrete setting to explore how much we can weaken the structural requirements.

Before formally stating our result, we introduce some relevant definitions. Let $Q_{2d-1}$ denote the $(2d-1)$-dimensional Hamming cube. For a vertex $v \in V(Q_{2d-1}) = \{0,1\}^{2d-1}$, let $\|v\|$ denote its Hamming weight (the number of 1's in the coordinates of $v$), and let
\[
    L_{k} := \{v\in V(Q_{2d-1}):\|v\|=k\}
\]
denote the \textit{$k$-th layer} of $Q_{2d-1}$. Let $Q_{2d-1}^{\md} := Q_{2d-1}[L_{d-1}\cup L_d]$ denote the induced subgraph of $Q_{2d-1}$ on the two middle layers. Observe that $Q_{2d-1}^{\md}$ is a balanced bipartite $d$-regular graph with parts $L_{d-1}$ and $L_d$. 
It is a known fact (see, e.g., \cite[Chapter 2]{stanley2013algebraic}) that the second largest eigenvalue of $Q_{2d-1}^{\md}$ is $\lambda = d - \Theta(1)$. This places $Q_{2d-1}^{\md}$ in the $\delta = \Theta(1/d) ~(= o_d(1))$ regime of \Cref{thm:Z-hom-spectral-expanders}.

Our second main theorem shows that the strong $O(1)$-flatness for the full Hamming cube still holds in this less rigid structure.

\begin{thm}\label{thm:Z-hom-middle-layers}
Fix an arbitrary vertex $v_0\in L_{d-1}$. Let $\mathbf{h}$ be a random $\mathbb Z$-homomorphism chosen uniformly from $\Hom_{v_0}(Q_{2d-1}^{\md})$. Then, with probability $1-o_d(1)$,
\[
    R(\mathbf{h}) \le 5.
\]
\end{thm}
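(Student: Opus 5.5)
Throughout, write $\Gamma=Q^{\md}_{2d-1}$, $N=\binom{2d-1}{d}$ for the size of each part, and $n$, $\Gamma$ as in \Cref{thm:Z-hom-spectral-expanders}. We follow Kahn and Galvin for the cube and replace the product structure of $Q_d$ with the isoperimetric and spectral properties of the middle layers.

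The plan has three stages. First, we reduce to a statement about the two parity classes. Call $h$ \emph{flat} if there are integers $a$ (even) and $b$ (odd) such that all but few even vertices take the value $a$ and all but few odd vertices take one of $b\pm1$ (or the reverse roles). Galvin's proof that the range is at most $5$ has exactly this shape: first show that $\mathbf h$ is flat, with all but $o(1)$-fraction exceptions; then show that the exceptional set consists of small, isolated defects. A defect is a vertex whose value deviates by $2$ from the typical values, and which therefore forces all of its neighbours to a single value. The final range comes from one dominant value on one side, two values on the other side, plus one defect level on each side, giving $5$.

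Second, we prove approximate flatness. Here \Cref{thm:Z-hom-spectral-expanders}, applied with $\delta=\Theta(1/d)$ (which the paper states is allowed), already gives range $O_d(\log\log N)$. This is not enough on its own. Instead we use its engine, which is an entropy/Shearer-type count of homomorphisms via "odd cutsets." A level set boundary of $h$ is a bipartite-closed cutset $S\subseteq V$ with $N(S\cap\mathcal E)\subseteq$ boundary. For the middle layers we have vertex expansion: for $A\subseteq L_{d-1}$ with $|A|\le N/2$, we have $|N(A)|\ge(1+c/\sqrt d)|A|$ in the worst case, by Kruskal--Katona / the known vertex isoperimetry of the middle layers. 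We also have $|N(A)|\ge (d/2)|A|$ roughly for small $A$, since $A$ spans few 2-step neighbours when $|A|$ is tiny; this is the "$d$-regular, codegree at most 2" property, since two vertices in the same middle layer share at most two neighbours.

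Using Galvin--Kahn container arguments (approximations of 2-linked odd cutsets), we bound the number of cutsets with $|N(A)|-|A|=t$ by roughly $2^{t(1-\Omega(1))}$ for small and medium $A$. The weight of each such cutset in the homomorphism measure is $2^{-t}$. This is the main obstacle: the container counting in Galvin's work uses the cube's structure (codegree $\le 2$, and expansion $|N(A)|\ge|A|(1+\Omega(1/\sqrt d))$ for large sets). We will verify that the same codegree bound and an isoperimetric inequality of that strength hold for the middle layers, and redo the Sapozhenko-type graph container lemma with these inputs. This would give a weighted union bound showing that, with probability $1-o(1)$, no "level change" occurs on a set of size between $d^{O(1)}$ and $N/2$.

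Third, we bound the defects. The remaining irregularities are sets $A$ of size at most $d^{O(1)}$, and these have $|N(A)|\ge \Omega(d)|A|$. A direct count shows that the expected number of 2-linked defect clusters of size $k\ge2$ is $N\cdot d^{O(k)}2^{-\Omega(dk)}$. This is $o(1)$ for $k\ge2$ but not for $k=1$, since $N 2^{-d}$ is large. So single-vertex defects exist, but any defect is an isolated vertex. It remains to rule out a defect of depth $2$ (a vertex at distance $3$ from the main values): such a vertex needs its entire $2$-neighbourhood, of size about $d^2$, to be shifted. Its probability is at most $N\cdot2^{-\Omega(d^2)}=o(1)$, because $N\le 4^d$. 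Combining the three stages, the values are $\{a-2,a,a+2\}\cup\{b\pm1\}$ restricted appropriately, i.e.\ at most $5$ values. The rooting at $v_0$ only fixes the global shift and so does not affect the range.
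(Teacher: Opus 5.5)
The gap is in Stage 2, which carries essentially all of the difficulty. It fails for three concrete reasons.

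First, the isoperimetric inequality you plan to verify is false for the middle layers. Let $A\subseteq L_{d-1}$ be the family of all $(d-1)$-sets containing a fixed element. Then $|A|=\tfrac{d-1}{2d-1}N<N/2$, while $N(A)$ is the family of $d$-sets containing that element, so $|N(A)|=\tfrac{d}{d-1}|A|$. Only the $1+\Theta(1/d)$ expansion of \Cref{lem:isoperimetry}(a) is available, as $\lambda=d-\Theta(1)$ predicts (note also that two vertices of $L_{d-1}$ share at most one neighbour). So $t_1=|N(A)|-|[A]|$ can be as small as $\Theta(g/d)$. What must actually be shown is that the container and reconstruction costs are $2^{o(t_1/\log d)}$ in that regime, which is the content of \Cref{lem:phi-approx}, \Cref{lem:psi-approx} and \Cref{lem:reconstruction}.

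Second, the Peierls bookkeeping is wrong. No operation removing a level region is specified. With deficit $t=|N(A)|-|A|$ the count is not $2^{t(1-\Omega(1))}$: the roughly $2^{a'}$ half-size 2-linked subsets of one closed set with $|[A]|=a'$ and $|N([A])|=g$ all have deficit at most $g-a'/2$, which is less than $a'$ once $g<\tfrac32 a'$. Level regions need not be closed, so such sets do arise. The paper's mechanism is the merge of a maximum component $K$, which for fixed $K$ is injective and multiplies the weight by exactly $2^{|N(K)|-|B(K)|}$ (\Cref{obs:merge-preserve-labeling}). So the correct deficit is $|N(K)|-|B(K)|$. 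The matching count $|\mathcal H(a',g,b,h)|\le 2^{g-b-\Omega(t_1/\log d)}$ (\Cref{cor:H-middle-layers}) requires Galvin's quadruples, which approximate $B(A)$ and $N(B(A))$ as well as $A$.

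Third, your target is false if a level change means a 2-linked level region on one side, which is the only kind of object a container count sees. In the reverse-roles ground state you allow in Stage 1, $\tfrac12\mathbf h|_{L_{d-1}}$ takes two values on a roughly balanced random partition. The set where it takes the larger value is then, up to negligible pieces, a maximum component of size about $N/2$ whose closure is almost all of $L_{d-1}$. For such sets $t_1/g\to 0$ and their total weight does not decay, so no union bound over them can succeed. Size must therefore be measured by $|[A]|$, and the argument only controls components with $|[A]|\le(1-c)N$. This is exactly why the paper proves max-/min-goodness (\Cref{lem:almost-all-good}) rather than the existence of a dominant value.

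Consequently your endgame also needs replacing, because goodness does not single out a dominant layer. The paper applies goodness to both layers to get at most three values per layer (\Cref{lem:three}, \Cref{lem:three-2}), hence at most six in total. It then excludes six values by a separate entropy count against $|\Hom_{v_0}(Q_{2d-1}^{\md})|\ge 2^N$ (\Cref{lem:six-to-five}). That count handles configurations in which $L_{d-1}$ is mostly $0$ and $L_d$ is mostly $-1$ simultaneously; goodness alone does not rule these out, and nothing in your outline does either.

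Stage 3 transplants the cube's numerology. Here $N=\Theta(4^d/\sqrt d)$, so your own heuristic gives $\Theta(Nd^2 2^{-2d})=\Theta(d^{3/2})$ 2-linked pairs of defects rather than $o(1)$, and defects are not isolated. This is harmless for the number of values, since clusters of fewer than $d$ defects at level $a\pm2$ keep everything inside $\{a-2,\dots,a+2\}$, but the claim as stated is wrong.

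Likewise, the correct accounting in Stage 1 is $a$, $a\pm1$, $a\pm2$, with both defect levels on the dominant side. A defect level on the other side would sit at $a\pm3$, which is the depth-two event you exclude later.

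Finally, the argument behind \Cref{thm:Z-hom-spectral-expanders} does not reach $\delta=\Theta(1/d)$. Its container step needs $t_1\ge\delta g/8$ to absorb a $2^{O(g\log d/\sqrt d)}$ cost.
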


\noindent We remark that our proof of \Cref{thm:Z-hom-middle-layers} primarily relies on the isoperimetric properties of the middle layers---following the approach in the proof of \Cref{thm:Z-hom-spectral-expanders}---rather than their exact combinatorial structure. Because the expansion of the middle layers is strictly weaker than that of the full Hamming cube, our method naturally recovers the results of Kahn \cite{kahn2001range} and Galvin \cite{galvin2003homomorphisms} for the full cube, again without relying on its specific structure.\footnote{Galvin \cite{galvin2003homomorphisms} actually provided a much more detailed description of the distribution of the range. Furthermore, one could potentially obtain even more precise probabilistic information by employing the cluster expansion method of Jenssen and Perkins \cite{jenssen2020independent}. While our current approach can be extended to yield similar distributional results, we do not pursue this direction here for the sake of simplicity and to avoid excessive repetition.}

The second motivation for studying $\mathbb{Z}$-homomorphisms on the middle layers stems from the study of graph homomorphisms on these layers. In this direction, Balogh, Garcia and Li \cite{balogh2021independent} determined the asymptotic number and typical structure of independent sets on the middle layers. Following their work, Li, McKinley, and the second author \cite{li2023number} determined the typical structure of $q$-colorings; interestingly, their method only applies when $q$ is even, leaving the problem completely open even for $3$-colorings, the smallest odd case.

This gap in the literature makes the study of $\mathbb{Z}$-homomorphisms on the middle layers particularly interesting. On the full Hamming cube, there is a natural mapping---reducing values modulo $3$---between $\mathbb{Z}$-homomorphisms and proper $3$-colorings; this key equivalence, attributed to Dana Randall in \cite{galvin2003homomorphisms}, allowed Galvin to immediately deduce the typical structure of $3$-colorings from his result on $\mathbb Z$-homomorphisms. However, on the middle layers, this connection simply fails. Consequently, without this bijection and with $3$-colorings remaining out of reach, determining the typical behavior of $\mathbb{Z}$-homomorphisms on the middle layers remains an interesting problem on its own.

\subsection{Proof overview and key ideas}

Historically, a well-established framework for proving typicality results for graph homomorphisms and related discrete models (such as ordinary/maximal independent sets, proper colorings and Lipschitz functions) on bipartite (weak) expanders relies heavily on a two-step ``ground state'' paradigm (e.g., \cite{balogh2025maximal,balogh2021independent,galvin2003homomorphisms, jenssen2023homomorphisms,JMP1,jenssen2020independent,   kahn2020number, kahn2022number,    krueger2024lipschitz,   li2023number,peled2013lipschitz,sapoposet,sapo}). This framework first establishes that a typical configuration globally aligns with a rigid ground state (e.g., for $\mathbb{Z}$-homomorphisms, even vertices take $0$ and odd vertices take $\pm 1$). The second step then employs a Peierls-type contour argument (see, e.g., \cite{peierls1936ising, korshunov1983number}) to bound the connected components of ``defects'' deviating from this ground state, showing that large deviations result in a large entropic penalty.

Peled, Samotij, and Yehudayoff \cite{peled2013lipschitz} also used this framework. Their starting point was a key structural lemma that establishes a global ground state. They proved that for any bipartite $(n,d,\lambda)$-graph on $(V_0, V_1)$, every $\mathbb{Z}$-homomorphism $f$ mostly takes a single value on one side. That is, there exist $i \in \{0,1\}$ and $k \in \mathbb{Z}$ such that
\begin{equation}\label{eq:psy-ground-state}
    |\{v \in V_i: f(v) \ne k\}| \le \frac{2\lambda n}{d}.
\end{equation}
Recently, Krueger, Li, and the second author \cite{krueger2024lipschitz} pushed the expansion requirement for the flatness of typical \textit{$M$-Lipschitz functions} to $\lambda < d/5$, improving upon the bound $O(d/(M\log(Md^2)))$ in \cite{peled2013lipschitz}. While they noted that their approach could be adapted to yield similar results for $\mathbb{Z}$-homomorphisms, \eqref{eq:psy-ground-state} remains a fundamental bottleneck. For the inequality in \eqref{eq:psy-ground-state} to provide a meaningful restriction---specifically, to ensure that a majority of vertices in $V_i$ actually take the value $k$---we must have $\frac{2\lambda n}{d} < n$, which requires $\lambda < d/2$. Consequently, this key lemma does not provide a useful bound for weak expanders, where $\lambda$ can be as large as $(1-\delta)d$. To bypass this limitation, our proof does not rely on establishing a global ground state.

Instead, we introduce a local smoothing procedure called the \textit{merge} operation. Rather than comparing a configuration to a single rigid ground state, we analyze how the number of valid configurations changes as we ``flatten'' local extrema. Specifically, we demonstrate that applying the merge operation to a set of $\mathbb{Z}$-homomorphisms with large peaks maps them into a space of flatter configurations that is significantly larger. To carefully bound the size of the preimage of this operation, we employ the graph container method, a powerful tool in this area. While our container construction closely follows  \cite{galvin2003homomorphisms}, the key to our proof is the combination of these containers with the newly introduced merge operation. As a consequence, configurations with tightly bounded fluctuations constitute almost all of the set of valid configurations, allowing us to deduce the typical structure directly without establishing a global ground state. Crucially, by avoiding this step, our argument no longer requires any special structural properties or strong expansion assumptions on the underlying graph.

\begin{remark}\label{rmk:expander}
As noted in an earlier footnote, our definition of an expander differs from the one used in  \cite{peled2013lipschitz}. They defined a bipartite expander based on the conclusion of the Expander Mixing Lemma \cite{alon1988explicit}. This specific formulation yields the following vertex expansion (see Proposition 2.8 in \cite{peled2013lipschitz}): for every subset $A \subseteq V(G)$,
\[
    |N(A)| \ge \min \left\{ \frac{n}{2}, \frac{d^2}{4\lambda^2}|A| \right\}.
\]
Crucially, this bound does not provide an expansion guarantee when $\lambda>d/2$. To avoid this technical limitation, we adopt the standard spectral definition of a bipartite $(n,d,\lambda)$-graph. Since every $(n,d,\lambda)$-graph satisfies the conclusion of the Expander Mixing Lemma, the assumption in \cite{peled2013lipschitz} is strictly weaker than ours. However, we emphasize that adopting this stronger definition is not merely to bypass the bottleneck encountered in \cite{krueger2024lipschitz, peled2013lipschitz}. The fundamental obstacle regarding \eqref{eq:psy-ground-state} applies equally under both definitions; our resolution relies entirely on the introduction of the merge operation.
\end{remark}

\subsection*{Definitions and notation} 

Given a graph $\Gamma$, let $V(\Gamma)$ and $E(\Gamma)$ denote its vertex and edge sets, respectively. For disjoint sets of vertices $A$ and $B$, we denote by $E(A, B)$ the set of edges with one endpoint in $A$ and the other in $B$. For a vertex $v \in V(\Gamma)$ and subsets $A, F \subseteq V(\Gamma)$, we define
\[
    N_F(v) := \{w \in F : \{v,w\} \in E(\Gamma)\}, \qquad N_F(A) := \bigcup_{v \in A} N_F(v),
\]
and let $d_F(v) := |N_F(v)|$ and $d_F(A) := |N_F(A)|$. When the underlying graph is clear from the context, we drop the subscript and simply write $N(\cdot)$ for $N_{V(\Gamma)}(\cdot)$ and $d(\cdot)$ for $d_{V(\Gamma)}(\cdot)$.
We define the second neighborhood as $N^2(v) := N(N(v))$, and similarly, $N^2(A) := \bigcup_{v \in A} N^2(v)$. (Note that under this definition, $N^2(v)$ may include $v$ itself, and similarly, $N^2(A)$ may intersect $A$.)

The distance $\dist_\Gamma(u,v)$ (or $\dist(u,v)$ when $\Gamma$ is clear) is the length of a shortest path between $u$ and $v$ in $\Gamma$. The diameter of $\Gamma$ is defined as
\[
    \diam(\Gamma) := \max_{u,v \in V(\Gamma)} \dist(u,v).
\]
Throughout this paper, all logarithms are base $2$, i.e., $\log x := \log_2 x$. For a positive integer $k$, we denote the set $\{1, 2, \dots, k\}$ by $[k]$. We omit floor and ceiling signs and assume that all sufficiently large quantities are integers where appropriate, as is common practice.

We always assume that the parameter $d$ is sufficiently large. Unless stated otherwise, all asymptotic notation ($O$, $\Omega$, $\Theta$, $o$, $\omega$) is with respect to $d \rightarrow \infty$. We write $f \ll g$ to denote $f=o(g)$, and $f \gg g$ to denote $f=\omega(g)$. Finally, we write $f|_S$ for the restriction of a function $f$ to a subset $S$.

\subsection*{Organization of the paper}
The remainder of this paper is organized as follows. In \Cref{sec:prelim}, we collect basic preliminaries. In \Cref{sec:containers}, we introduce the graph container lemmas. In Sections \ref{sec:reduction} and \ref{sec:merge}, we present the merge operation. Finally, in Sections \ref{sec:proof-spectral} and \ref{sec:proof-middle layers}, we prove Theorems \ref{thm:Z-hom-spectral-expanders} and \ref{thm:Z-hom-middle-layers}, respectively.

\section{Preliminaries}\label{sec:prelim}

\subsection{Graph theory basics}

\begin{mydef}[$k$-linked set and component]
We say a subset $A\subseteq V(\Gamma)$ is \emph{$k$-linked} in a graph $\Gamma$ if for every pair of vertices $u,v\in A$, there exists a sequence of vertices $u=v_0,v_1,\dots,v_m=v$ in $A$ such that $\dist_\Gamma(v_i,v_{i+1})\le k$ for all $0\le i<m$. 
Furthermore, for a given subset $S \subseteq V(\Gamma)$, a \emph{$k$-linked component} of $S$ is defined as a maximal $k$-linked subset of $S$.
\end{mydef}

The following lemma follows from the standard fact (see, e.g., \cite[p.~396, Ex.~11]{knuth1968art}) that the infinite $\Delta$-branching rooted tree contains at most $(e\Delta)^{t-1}$ rooted subtrees with $t$ vertices.

\begin{lem}\label{lem:linked-set-count}
    Let $\Gamma$ be a graph with maximum degree $\Delta$. For every fixed integer $k\geq 1$, the number of $k$-linked subsets of $V(\Gamma)$ of size $t$ that contain a given vertex is at most $(e\Delta^k)^{t-1}$.
\end{lem}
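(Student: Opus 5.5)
The plan is to reduce to the tree-counting fact quoted above. I will first define an auxiliary graph $\Gamma^{k}$ on $V(\Gamma)$, in which $u\neq v$ are adjacent whenever $\dist_\Gamma(u,v)\le k$. By definition, a set $A$ is $k$-linked in $\Gamma$ exactly when the induced subgraph $\Gamma^{k}[A]$ is connected.

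Next I will bound the maximum degree of $\Gamma^{k}$. The number of vertices at distance between $1$ and $k$ from a fixed vertex is at most $\Delta+\Delta(\Delta-1)+\dots+\Delta(\Delta-1)^{k-1}$. This sum is at most $\Delta^k$ when $\Delta\ge 2$; the cases $\Delta\le 1$ are trivial, since the $k$-linked sets are then tiny, and can be checked directly. So $\Gamma^{k}$ has maximum degree at most $\Delta^k$.

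The main step is to count connected $t$-vertex subsets containing a fixed vertex $v$ in a graph $G$ of maximum degree $D$, and show there are at most $(eD)^{t-1}$ of them. For each such set $A$, fix a spanning tree of $G[A]$ and root it at $v$. To make the map injective, I will take this to be the BFS tree with respect to a fixed ordering of the vertices; then $A$ is recovered as the vertex set of its tree.

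I then embed each rooted tree into the infinite $D$-branching rooted tree. To do this, I order the at most $D$ neighbours of each vertex of $G$ and map a child to the corresponding branch. This gives an injective map from these rooted trees, viewed as subtrees of $G$ containing $v$, to rooted subtrees with $t$ vertices of the infinite $D$-branching tree. The map is injective because the labelled branches determine the actual vertices of $G$ step by step from the root.

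By the cited fact, there are at most $(eD)^{t-1}$ such rooted subtrees. Applying this with $G=\Gamma^{k}$ and $D=\Delta^k$ gives the bound $(e\Delta^k)^{t-1}$.

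The only delicate point is this injectivity. It holds because distinct subsets $A$ give distinct vertex sets, and the trees are determined by their images.
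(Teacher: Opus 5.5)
Your proof is correct and is essentially the reduction the paper leaves implicit when it says the lemma follows from the rooted-subtree count for the infinite branching tree. The $k$-linked sets are exactly the connected sets of the distance-$\le k$ graph on $V(\Gamma)$, which has maximum degree at most $\Delta^k$, and your spanning-tree-plus-neighbour-label map injects them into rooted $t$-vertex subtrees of the infinite $\Delta^k$-branching tree. One small simplification: the bound $\sum_{j=0}^{k-1}\Delta(\Delta-1)^j\le\Delta^k$ holds for every $\Delta\ge 1$, so the small-$\Delta$ cases need no separate check. To see this, note that the sum for $k+1$ equals $\Delta+(\Delta-1)$ times the sum for $k$, and $\Delta\le\Delta^k$.
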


\begin{mydef}\label{def:bipartite-cover}
Let $\Gamma$ be a bipartite graph with vertex bipartition $V(\Gamma)=\cE\cup \cO$. For $P\subseteq \cE$ and $Q\subseteq \cO$, we say that $Q$ \emph{covers} $P$ if $P\subseteq N(Q)$.
\end{mydef}

\begin{lem}[Lov\'asz \cite{Lov75}, Stein \cite{Ste74}] \label{lem:cover}
Let $\Gamma$ be a bipartite graph with vertex bipartition $V(\Gamma)=\cE\cup \cO$. Assume that
\[
|N(x)|\ge a \quad\text{for every }x\in \cE, \qquad \text{and} \qquad |N(y)|\le b \quad\text{for every }y\in \cO.
\]
Then, $\cE$ is covered by some subset $Q\subseteq \cO$ satisfying $|Q|\le \frac{|\cO|}{a}(1+\ln b)$.
\end{lem}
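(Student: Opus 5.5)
The plan is the standard greedy (Lovász–Stein) set-cover argument, with a random-sampling-plus-repair proof kept as a fallback.

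\emph{Setup.} If $\cE=\emptyset$ there is nothing to prove. Otherwise double counting the edges gives $a|\cE|\le e(\cE,\cO)\le b|\cO|$.

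\emph{Greedy selection.} We pick vertices of $\cO$ one at a time, always taking a vertex that covers the largest number of still-uncovered vertices of $\cE$. Let $U_i$ be the set of uncovered vertices after $i$ steps, so $U_0=\cE$.

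\emph{Per-step progress.} Each $x\in U_i$ has at least $a$ neighbours, so there are at least $a|U_i|$ edges between $U_i$ and $\cO$. Averaging over $\cO$, some $y\in\cO$ has at least $a|U_i|/|\cO|$ neighbours in $U_i$. Hence
\[
|U_{i+1}|\le |U_i|\Bigl(1-\frac{a}{|\cO|}\Bigr), \qquad |U_i|\le |\cE|e^{-ai/|\cO|}.
\]

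\emph{Phase one.} We run this until $|U_i|\le |\cO|/a$. Since $|\cE|\le b|\cO|/a$, the inequality above shows this takes at most
\[
t=\frac{|\cO|}{a}\ln\frac{|\cE|a}{|\cO|}\le \frac{|\cO|}{a}\ln b
\]
steps. After that we cover each remaining uncovered vertex with one arbitrary neighbour, which costs at most $|\cO|/a$ additional vertices. In total $|Q|\le \frac{|\cO|}{a}(1+\ln b)$, as required.

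\emph{Fallback.} To avoid integrality fuss in the phase-one count, one can instead include each $y\in\cO$ independently with probability $p=\ln b/a$, capped at $1$. Then cover each vertex $x$ missed by the sample with one neighbour. The expected size is at most
\[
p|\cO|+|\cE|(1-p)^a\le \frac{|\cO|\ln b}{a}+\frac{b|\cO|}{a}e^{-\ln b}=\frac{|\cO|}{a}(1+\ln b),
\]
so some choice achieves the bound. If $p>1$, then $\ln b>a$ and taking $Q=\cO$ already works, since $|\cO|\le |\cO|\ln b/a$.

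\emph{Main obstacle.} The only delicate step is the phase-one step count: one must make sure that stopping at threshold $|\cO|/a$ gives exactly the $\ln b$ factor. This is handled by the double-counting bound $|\cE|\le b|\cO|/a$, and the random-sampling version sidesteps it cleanly, so I would present that version.
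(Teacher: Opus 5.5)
Your proof is correct, and the version you say you would present is complete. That version takes a random sample with $p=\ln b/a$ and then adds one neighbour for each missed vertex. Indeed $\mathbb{E}|Y|=p|\cO|$, and each $x\in\cE$ is missed with probability $(1-p)^{|N(x)|}\le e^{-pa}=1/b$. The double count $a|\cE|\le b|\cO|$ then bounds the expected repair cost by $|\cO|/a$, and your treatment of the case $p>1$ is right.

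The paper gives no proof of this lemma; it is quoted from Lov\'asz and Stein. The argument there is the greedy algorithm analysed by a charging scheme, not by your two-phase count. Each time greedy picks a vertex covering $k$ new vertices of $\cE$, charge $c(x)=1/k$ to each of them. If just before some step $r\ge 1$ vertices of $N(y)$ are uncovered, then $y$ itself would cover $r$ new vertices, so greedy covers at least $r$. Hence each vertex of $N(y)$ covered at that step is charged at most $1/r$, and the total charge on $N(y)$ is at most $\sum_{k\le b}1/k\le 1+\ln b$. Since $|N(x)|\ge a$ for every $x$,
\[
|Q|=\sum_{x\in\cE}c(x)\le \frac{1}{a}\sum_{y\in\cO}\sum_{x\in N(y)}c(x)\le \frac{|\cO|}{a}(1+\ln b).
\]
This is the clean way to make your greedy version exact. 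As written, phase one needs $\lceil t\rceil$ rather than $t$ steps, so that count only gives the bound up to an additive $1$.

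The two routes buy different things. The charging proof is deterministic and gives the slightly sharper harmonic factor $\sum_{k\le b}1/k$. Your probabilistic proof is shorter and uses the maximum-degree hypothesis on $\cO$ only through the double count. So, taking $p=\ln(a|\cE|/|\cO|)/a$ when $a|\cE|\ge|\cO|$, it even gives the bound with $b$ replaced by $a|\cE|/|\cO|\le b$. Either is more than enough for the paper's only use of the lemma, in the appendix with $a=b=d$.
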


\subsection{Isoperimetry in bipartite spectral expanders}

We begin by stating a vertex expansion bound for bipartite graphs based on their second largest eigenvalue. The following lemma is a bipartite variant of the classical Tanner bound \cite{tanner1984explicit} (for a broader overview of such spectral techniques, we refer the reader to \cite{hoory2006expander}).

\begin{lem}\label{prop:tanner}
Let $\Gamma=(V,E)$ be a bipartite $(n,d,\lambda)$-graph with vertex bipartition $V=\cE\cup \cO$, such that $\lambda=\alpha d$.
If $S\subseteq \cE$ (or $S\subseteq \cO$) and $|S|=\rho n$, then we have
\[
\frac{|N(S)|}{|S|}
\ge
\frac{1}{\rho(1-\alpha^2)+\alpha^2}.
\]
\end{lem}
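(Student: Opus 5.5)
We plan to prove the bipartite Tanner bound spectrally. Write $\Gamma$'s adjacency matrix in block form with biadjacency matrix $B$ (an $n\times n$ matrix, rows indexed by $\cE$, columns by $\cO$). Since $\Gamma$ is $d$-regular and bipartite, the eigenvalues of the adjacency matrix are $\pm\sigma_i$, where $\sigma_1=d\ge\sigma_2\ge\dots$ are the singular values of $B$. The hypothesis that the second largest eigenvalue is at most $\lambda$ therefore gives $\sigma_2\le\lambda$. The top singular vectors of $B$ are the normalized all-ones vectors on each side. By symmetry we may assume $S\subseteq\cE$.

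The key step is to apply $B^T$ to the indicator $\mathbf 1_S$. Decompose $\mathbf 1_S=\rho\mathbf 1+w$, where $w\perp\mathbf 1$ and $\|w\|^2=|S|-\rho^2 n=\rho n(1-\rho)$. Then $B^T\mathbf 1_S=\rho d\mathbf 1+B^Tw$. Here $B^Tw\perp\mathbf 1$ and $\|B^Tw\|\le\lambda\|w\|$, because $w$ lies in the orthogonal complement of the top right singular vector. Hence
\[
\|B^T\mathbf 1_S\|^2\le \rho^2 d^2 n+\alpha^2d^2\rho n(1-\rho).
\]
On the other hand, $B^T\mathbf 1_S$ is supported on $N(S)$ and its entries sum to $d|S|=d\rho n$. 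By Cauchy--Schwarz,
\[
\|B^T\mathbf 1_S\|^2\ge \frac{(d\rho n)^2}{|N(S)|}.
\]

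Combining the two bounds gives
\[
|N(S)|\ge\frac{\rho^2n^2}{\rho^2 n+\alpha^2\rho n(1-\rho)}=\frac{\rho n}{\rho+\alpha^2(1-\rho)}.
\]
Dividing by $|S|=\rho n$ yields $\frac{1}{\rho(1-\alpha^2)+\alpha^2}$, as claimed.

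The only point requiring care, and the main potential obstacle, is justifying $\|B^Tw\|\le\lambda\|w\|$ for $w\perp\mathbf 1$ from the eigenvalue assumption. For this, note that $B^TB$ restricted to $\mathbf 1^\perp$ has top eigenvalue $\sigma_2^2$. Also, the vector $(u,\pm v)$ built from a singular pair $(u,v)$ of $B$ is an eigenvector of the adjacency matrix with eigenvalue $\pm\sigma$. So $\sigma_2$ is itself an adjacency eigenvalue, and it is the second largest one, hence $\sigma_2\le\lambda$. If $\Gamma$ is disconnected, then $\sigma_2=d$; this forces $\alpha=1$, and the bound reduces to the trivial $|N(S)|\ge|S|$, which holds by regularity. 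So the argument covers that case as well.
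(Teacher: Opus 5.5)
Your argument is correct and is essentially the paper's proof. The paper expands $\mathbf 1_S$ in an orthonormal eigenbasis of the full $2n\times 2n$ adjacency matrix, where the components along $v_1$ and $v_{2n}$ sum to exactly your $\rho\mathbf 1_{\cE}$; it bounds $\|A\mathbf 1_S\|_2^2\le d^2\rho^2 n+\alpha^2d^2(\rho n-\rho^2 n)$ just as you bound $\|B^T\mathbf 1_S\|^2$, and finishes with the same Cauchy--Schwarz step over $N(S)$. Your biadjacency/singular-value formulation is only a repackaging (strictly, the operator to restrict to $\mathbf 1^{\perp}\subseteq\mathbb R^{\cE}$ is $BB^T$ rather than $B^TB$, though the two have the same spectrum), and the disconnected case needs no separate treatment, since your main estimate already covers $\sigma_2=d\le\lambda$.
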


\begin{proof}
Let $A$ be the $2n \times 2n$ adjacency matrix of $\Gamma$. Since $\Gamma$ is a bipartite $d$-regular graph, its eigenvalues occur in pairs $(\mu,-\mu)$, with top eigenvalues $d$ and $-d$, and all remaining eigenvalues of absolute value at most $\lambda=\alpha d$.

Choose an orthonormal eigenbasis $v_1,\dots,v_{2n} \in \mathbb R^{\cE\cup \cO}$ so that
\[
v_1=\frac{1}{\sqrt{2n}}\mathbf 1_{\cE\cup \cO},
\qquad
v_{2n}=\frac{1}{\sqrt{2n}}(\mathbf 1_{\cE}-\mathbf 1_{\cO}),
\]
with eigenvalues $d$ and $-d$, respectively. Write $\mathbf 1_S:=\sum_{i=1}^{2n} a_i v_i$. 
Since $S\subseteq \cE$ and $|S|=\rho n$, we have
\[
a_1=\mathbf 1_S\cdot v_1=\rho\sqrt{\frac{n}{2}},
\qquad
a_{2n}=\mathbf 1_S\cdot v_{2n}=\rho\sqrt{\frac{n}{2}}.
\]
Therefore,
\[
\|A\mathbf 1_S\|_2^2
=
\sum_{i=1}^{2n}\lambda_i^2 a_i^2
=
d^2\rho^2 n+\sum_{i=2}^{2n-1}\lambda_i^2 a_i^2
\le
d^2\rho^2 n+\alpha^2 d^2\sum_{i=2}^{2n-1} a_i^2.
\]
Now,
\[
\sum_{i=2}^{2n-1}a_i^2
=
\|\mathbf 1_S\|_2^2-a_1^2-a_{2n}^2
=
|S|-\rho^2 n
=
\rho n-\rho^2 n,
\]
so
\begin{align}\label{eq:tanner-1}
    \|A\mathbf 1_S\|_2^2
\le
d^2\rho^2 n+\alpha^2 d^2(\rho n-\rho^2 n)
=
d^2 |S|(\rho+\alpha^2(1-\rho)).
\end{align}

On the other hand, since $\Gamma$ is bipartite and $S\subseteq \cE$, the vector $A\mathbf 1_S$ has entries equal to zero on $\cE$. For every $y\in \cO$, we have $(A\mathbf 1_S)_y=d_S(y)$,
so $A\mathbf 1_S$ is exactly supported on $N(S)$, with
\[
\sum_{y\in N(S)} (A\mathbf 1_S)_y=\sum_{y\in N(S)}d_S(y)=d|S|.
\]
By Cauchy--Schwarz, we have
\begin{align}\label{eq:tanner-2}
    d^2|S|^2
=
\Bigl(\sum_{y\in N(S)} (A\mathbf 1_S)_y\Bigr)^2
\le
|N(S)|\cdot \|A\mathbf 1_S\|_2^2.
\end{align}
Combining \eqref{eq:tanner-1} and \eqref{eq:tanner-2} gives
\[
d^2|S|^2
\le
|N(S)|\cdot d^2|S|(\rho+\alpha^2(1-\rho)),
\]
and hence
\[
\frac{|N(S)|}{|S|}
\ge
\frac{1}{\rho+\alpha^2(1-\rho)}
=
\frac{1}{\rho(1-\alpha^2)+\alpha^2}.
\qedhere
\]
\end{proof}

\begin{cor}\label{prop:expand-3/4}
Fix $\delta>0$. Let $\Gamma=(V,E)$ be a bipartite $(n,d,\lambda)$-graph with vertex bipartition $V=\cE\cup \cO$, such that $\lambda\le (1-\delta)d$. Then every set $S\subseteq \cE$ (or $S\subseteq \cO$) with $|S|\le \frac{3n}{4}$
satisfies
\[
|N(S)|\ge \Bigl(1+\frac{\delta}{4}\Bigr)|S|.
\]
\end{cor}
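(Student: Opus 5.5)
We apply \Cref{prop:tanner} with $\alpha=\lambda/d\le 1-\delta$ and $\rho=|S|/n\le 3/4$. The lower bound $\frac{1}{\rho(1-\alpha^2)+\alpha^2}$ is decreasing in $\rho$, because the coefficient $1-\alpha^2$ is nonnegative. So it suffices to handle the worst case $\rho=3/4$, where the denominator is $\frac34(1-\alpha^2)+\alpha^2=\frac34+\frac{\alpha^2}{4}$. This expression is increasing in $\alpha$, so we may also take the worst case $\alpha=1-\delta$. (If $S=\emptyset$ the claim is trivial, so assume $\rho>0$.)

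We then need
\[
\frac{1}{\frac34+\frac{(1-\delta)^2}{4}}\ge 1+\frac{\delta}{4}.
\]
Write $\frac34+\frac{(1-\delta)^2}{4}=1-\frac{2\delta-\delta^2}{4}=1-\frac{\delta(2-\delta)}{4}$. Since we may assume $0<\delta\le 1$ (for $\delta>1$ the hypothesis forces $\lambda<0$, which is vacuous or can be replaced by $\delta=1$), we have $2-\delta\ge 1$. Hence the denominator is at most $1-\frac{\delta}{4}$.

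Finally, $\frac{1}{1-x}\ge 1+x$ for $x\in[0,1)$, and with $x=\delta/4$ this gives $|N(S)|/|S|\ge 1+\frac{\delta}{4}$. The case $S\subseteq\cO$ is identical by symmetry of \Cref{prop:tanner}.

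There is no real obstacle. The only points needing care are the monotonicity in $\rho$ and $\alpha$, and the restriction $\delta\le 1$ used to get $\delta(2-\delta)\ge\delta$.
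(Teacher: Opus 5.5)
Your proof is correct and is essentially the paper's argument: both apply \Cref{prop:tanner} and finish with $\frac{1}{1-x}\ge 1+x$. The paper skips the worst-case substitution by writing the denominator as $1-(1-\alpha^2)(1-\rho)$ and using $1-\rho\ge\frac14$ and $1-\alpha^2\ge 1-\alpha\ge\delta$. One small correction: for $\delta>1$ the valid justification is that the hypothesis is vacuous (since $\lambda\ge 0$). Replacing $\delta$ by $1$ is not a valid reduction, because it would only give the weaker bound $1+\frac14$.
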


\begin{proof}
Write $\alpha=\lambda/d$, so $\alpha\le 1-\delta$. If $|S|=\rho n\le 3n/4$, then \Cref{prop:tanner} gives
\[
\frac{|N(S)|}{|S|}
\ge
\frac{1}{\rho(1-\alpha^2)+\alpha^2}
=
\frac{1}{1-(1-\alpha^2)(1-\rho)}.
\]
Using the inequality $\frac{1}{1-x} \ge 1+x$ for $x \in (0,1)$, and noting that $1-\rho\ge 1/4$ and $1-\alpha^2=(1-\alpha)(1+\alpha)\ge 1-\alpha\ge \delta$, we have
\[
\frac{|N(S)|}{|S|}
\ge
1+(1-\alpha^2)(1-\rho)
\ge
1+\frac{\delta}{4}. \qedhere
\]
\end{proof}

\subsection{Isoperimetry in $Q_{2d-1}^{\mathrm{mid}}$}

We will utilize the symmetry between $L_{d-1}$ and $L_d$ in $Q_{2d-1}^{\mathrm{mid}}$.

\begin{obs}\label{obs:symmetry}
    The graph $Q_{2d-1}^{\mathrm{mid}}$ has an automorphism that swaps $L_{d-1}$ and $L_d$.
\end{obs}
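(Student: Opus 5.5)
The natural candidate is the complementation map $\phi(v)=\mathbf 1-v$, which sends $v\in\{0,1\}^{2d-1}$ to the vector with every coordinate flipped. I will show that $\phi$ restricts to an automorphism of $Q_{2d-1}^{\mathrm{mid}}$ exchanging $L_{d-1}$ and $L_d$.

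First, the layers. For $v\in\{0,1\}^{2d-1}$ we have $\|\phi(v)\|=(2d-1)-\|v\|$. Hence $\|v\|=d-1$ exactly when $\|\phi(v)\|=d$, so $\phi(L_{d-1})=L_d$ and $\phi(L_d)=L_{d-1}$. In particular $\phi$ maps $L_{d-1}\cup L_d$ to itself. Since $\phi$ is an involution, it is a bijection of this vertex set.

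Next, adjacency. In $Q_{2d-1}$, two vertices $u,v$ are adjacent exactly when they differ in one coordinate. Complementing every coordinate does not change the set of coordinates where two vectors differ, so $u\sim v$ if and only if $\phi(u)\sim\phi(v)$. Thus $\phi$ is an automorphism of $Q_{2d-1}$. Because it preserves $L_{d-1}\cup L_d$, it also preserves the induced subgraph $Q_{2d-1}^{\mathrm{mid}}$, and by the first step it swaps the two parts.

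There is no real obstacle here. The only point that needs checking is that the dimension $2d-1$ is odd with the middle layers symmetric about $(2d-1)/2$, which is exactly what makes complementation send weight $d-1$ to weight $d$. As an alternative, one could compose $\phi$ with a coordinate permutation, but that is unnecessary.
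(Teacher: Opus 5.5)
Your proof is correct and matches the paper's: the paper also uses the complementation map $\tau(x)=\mathbf 1-x$, observing that it preserves adjacency and exchanges $L_{d-1}$ and $L_d$. You simply spell out the details the paper leaves implicit, namely the weight identity $\|\mathbf 1-v\|=(2d-1)-\|v\|$ and that flipping all coordinates preserves the set of coordinates where two vectors differ.
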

\begin{proof}
Define $\tau:L_{d-1}\cup L_d\to L_{d-1}\cup L_d$ by $\tau(x)=\mathbf 1-x$, i.e., by flipping every coordinate. Then $\tau$ preserves adjacency in $Q_{2d-1}^{\mathrm{mid}}$, with $\tau(L_{d-1})=L_d$ and $\tau(L_d)=L_{d-1}$.
\end{proof}

For $x \in \RR$, write $\binom{x}{i}:=\frac{x(x-1)\ldots(x-i+1)}{i!}$. We recall the Lov\'asz version of the Kruskal-Katona theorem \cite{kruskal1963number, katona1968theorem}. 

\begin{thm}[Lov\'asz \cite{lovasz2007combinatorial}] \label{thm:lovasz}
Let $\cA$ be a family of $m$-element subsets of a fixed set $U$, and let $\cB$ be the family of all $(m-q)$-element subsets of the sets in $\cA$. If $|\cA|=\binom{x}{m}$ for some real number $x \geq m$, then $|\cB|\geq\binom{x}{m-q}$.    
\end{thm}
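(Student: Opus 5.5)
This is the Lovász form of Kruskal–Katona. I will prove it by induction on $m$, and within fixed $m$ by induction on $|\cA|$ (equivalently on the size of the ground set), using the standard shifting/compression technique. It suffices to treat $q=1$: if $|\cA|=\binom{x}{m}$ with $x\ge m$ and the shadow satisfies $|\partial\cA|\ge\binom{x}{m-1}$, then $\partial\cA$ has size $\binom{x'}{m-1}$ for some $x'\ge x$. Since $\partial(\partial\cA)\subseteq \partial^2\cA$ and $\binom{\cdot}{m-2}$ is increasing on $[m-2,\infty)$, iterating $q$ times gives $|\cB|\ge\binom{x}{m-q}$. Monotonicity of $y\mapsto\binom{y}{k}$ for $y\ge k-1$ is used throughout to pass between real parameters.

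For $q=1$, I will first reduce to shifted families. For $i<j$ in $U$, the shift $S_{ij}$ replaces $A$ by $(A\setminus\{j\})\cup\{i\}$ whenever $j\in A$, $i\notin A$, and that set is not already in $\cA$. This shift preserves $|\cA|$ and does not increase $|\partial\cA|$. This is the routine check that $\partial S_{ij}(\cA)\subseteq S_{ij}(\partial\cA)$. After finitely many shifts, $\cA$ is stable under all $S_{1j}$ with $1$ a fixed element.

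Now split $\cA=\cA_0\cup\cA_1$, where $\cA_1=\{A\setminus\{1\}:1\in A\in\cA\}$ consists of $(m-1)$-sets and $\cA_0$ consists of the sets avoiding $1$. Stability gives $\partial\cA_0\subseteq\cA_1$. Also $\partial\cA\supseteq \cA_1\cup\{B\cup\{1\}:B\in\partial\cA_1\}$, so $|\partial\cA|\ge|\cA_1|+|\partial\cA_1|$.

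The key claim is $|\cA_1|\ge\binom{x-1}{m-1}$. Suppose instead that $|\cA_1|<\binom{x-1}{m-1}$. Then $|\cA_0|>\binom{x}{m}-\binom{x-1}{m-1}=\binom{x-1}{m}$. Applying the induction hypothesis for $m$ to $\cA_0$ on the smaller ground set $U\setminus\{1\}$ gives $|\cA_1|\ge|\partial\cA_0|\ge\binom{x-1}{m-1}$, a contradiction. This step requires $x-1\ge m$; when $x<m+1$, the bound $\binom{x-1}{m}\le 0$ is trivial, and that edge case needs separate handling. With the claim in hand, the induction on $m$ applied to $\cA_1$ gives $|\partial\cA_1|\ge\binom{x-1}{m-2}$. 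Adding the two bounds gives $|\partial\cA|\ge\binom{x-1}{m-1}+\binom{x-1}{m-2}=\binom{x}{m-1}$ by Pascal's identity, which holds for real $x$.

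The main obstacle is the real-parameter bookkeeping in the key claim and the base cases. This means verifying that the induction is well-founded, making the parameters $x'$ defined by $|\cA_0|=\binom{x'}{m}$ legitimate, and handling the edge case $m\le x<m+1$, where $\cA_1$ could be small. I would handle that edge case directly: there $|\cA|\le m+1$-ish small families, and a nonempty family's shadow has at least $m$ sets.
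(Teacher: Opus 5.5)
The paper gives no proof of this statement. It is quoted from Lov\'asz's book and used as a black box, only in Lemma~\ref{lem:isoperimetry}. So there is nothing in the paper to compare against. Your proposal is the standard shifting proof of the Lov\'asz form of Kruskal--Katona (as in Frankl's short proof), and its main line is correct:
\begin{itemize}
\item the reduction to $q=1$ (the iterated shadow is the $q$-shadow), together with monotonicity of $y\mapsto\binom{y}{k}$;
\item the shift lemma $\partial S_{1j}(\cA)\subseteq S_{1j}(\partial\cA)$;
\item the inclusion $\partial\cA_0\subseteq\cA_1$ obtained from stability;
\item the bound $|\partial\cA|\ge|\cA_1|+|\partial\cA_1|$ and Pascal's identity for real $x$.
\end{itemize}

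What you should fix is your treatment of the supposed edge case $m\le x<m+1$. It is not an edge case, and your diagnosis of it is wrong.
\begin{itemize}
\item For $x\ge m$, every factor of $\binom{x-1}{m}$ is at least $x-m\ge 0$. So $\binom{x-1}{m}\ge 0$, not $\le 0$, and the key claim needs only $x-1\ge m-1$.
\item Concretely, suppose $|\cA_1|<\binom{x-1}{m-1}$. Then $|\cA_0|>\binom{x-1}{m}\ge 0$, so $\cA_0\neq\emptyset$ and $|\cA_0|=\binom{y}{m}$ with $y\ge m$.
\item Since $\binom{\cdot}{m}$ is strictly increasing on $[m-1,\infty)$ and $x-1\ge m-1$, this forces $y>x-1$.
\item The induction hypothesis then gives $|\cA_1|\ge|\partial\cA_0|\ge\binom{y}{m-1}\ge\binom{x-1}{m-1}$, a contradiction. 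This works uniformly for all $x\ge m$.
\end{itemize}
This matters, because the patch you propose would fail. For $x\in(m,m+1)$ one has $\binom{x}{m-1}=\frac{m}{x-m+1}|\cA|>m$ as soon as $|\cA|\ge 2$. So ``a nonempty family's shadow has at least $m$ sets'' falls short of the target. (Also, in that range $|\cA|\le m$, not ``$m+1$-ish''.)

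Two smaller points. First, the inductive application to $\cA_1$ is legitimate: $|\cA_1|\ge\binom{x-1}{m-1}\ge 1$, so $|\cA_1|=\binom{z}{m-1}$ with $z\ge\max(x-1,m-1)$. Second, induction on $|\cA|$ and induction on $|U|$ are not literally equivalent, but both work. Under induction on $|U|$, $\cA_0$ lives on $U\setminus\{1\}$. Under induction on $|\cA|$, stability gives $\cA_1\ne\emptyset$ (otherwise $\partial\cA_0=\emptyset$ forces $\cA=\emptyset$), hence $|\cA_0|<|\cA|$.
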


\begin{lem}\label{lem:isoperimetry}
    Let $Y\subseteq L_{d}$ (or $Y\subseteq L_{d-1}$). Let $N(Y)$ be the neighborhood of $Y$ in $Q_{2d-1}^{\mathrm{mid}}$.
\begin{enumerate}[(a)]
    \item If $|Y|\leq \binom{2d-1-c}{d}$ for some $c\geq 0$, then $|N(Y)|\geq \left(1+\frac{c}{d}\right)|Y|$.
    \item If $|Y|\leq d^6$, then $|N(Y)|\geq d|Y|/8$.
\end{enumerate}
\end{lem}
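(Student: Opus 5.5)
By the automorphism $\tau$ of \Cref{obs:symmetry}, it suffices to treat $Y\subseteq L_d$. Identify $L_d$ with the family of $d$-subsets of $[2d-1]$. Then $N(Y)\subseteq L_{d-1}$ is exactly the lower shadow $\partial Y$: the family of all $(d-1)$-subsets of members of $Y$. Both parts then reduce to lower bounds on the shadow, which we get from \Cref{thm:lovasz} with $m=d$ and $q=1$.

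\emph{Part (a).} Write $|Y|=\binom{x}{d}$ with $x\ge d$ real; if $|Y|$ is too small for this, use part (b) or the trivial bound $|N(Y)|\ge d$. Since $|Y|\le\binom{2d-1-c}{d}$ and $\binom{x}{d}$ is increasing in $x$ for $x\ge d-1$, we get $x\le 2d-1-c$. \Cref{thm:lovasz} then gives
\[
|N(Y)|\ge\binom{x}{d-1}=\frac{d}{x-d+1}\binom{x}{d}.
\]
Since $x-d+1\le d-c$, this yields $|N(Y)|\ge \frac{d}{d-c}|Y|$. Finally, $\frac{d}{d-c}\ge 1+\frac{c}{d}$ for $0\le c<d$, which proves (a). (For $c\ge d$ the hypothesis forces $Y=\emptyset$, which is trivial.)

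\emph{Part (b).} Use the same identity, $|N(Y)|\ge\frac{d}{x-d+1}|Y|$, which holds whenever $x\ge d$. The small-$|Y|$ case is handled separately. If $\binom{x}{d}\le d^6$, then $x-d$ must be small. Indeed, $\binom{d+t}{d}=\binom{d+t}{t}\ge (d/t)^t$ already exceeds $d^6$ once $t=7$ (for large $d$), so $x< d+7$. Hence $x-d+1<8$ and $|N(Y)|\ge d|Y|/8$.

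The remaining case is $|Y|<\binom{d}{d}=1$, i.e. $Y=\emptyset$, which is trivial. Some care is needed because Lov\'asz's theorem requires real $x\ge d$; every $|Y|\ge1$ is representable with $x\in[d,\infty)$, since $\binom{x}{d}$ increases from $1$ to $\infty$ there.

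The main obstacle is bookkeeping rather than substance: matching neighborhoods to shadows and handling monotonicity of $\binom{x}{d}$ in real $x$. For (b) the one small computation is checking that $\binom{d+7}{7}>d^6$ for large $d$. This holds since $\binom{d+7}{7}\ge d^7/7!$.
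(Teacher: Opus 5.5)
Your proof is correct and follows the paper's argument. You identify $N(Y)$ with the lower shadow, apply \Cref{thm:lovasz} with $m=d$, $q=1$ after writing $|Y|=\binom{x}{d}$ (the paper's $x=2d-1-c_1$), use $\binom{x}{d-1}=\frac{d}{x-d+1}\binom{x}{d}$, and for (b) use $\binom{d+7}{7}\gg d^6$ to force $x<d+7$. One harmless inaccuracy is your parenthetical that $c\ge d$ forces $Y=\emptyset$. This holds only for $c$ slightly above $d$: for even $d$, $\binom{-1}{d}=1$ at $c=2d$, and at $c=3d-1$ we get $\binom{-d}{d}=\binom{2d-1}{d}$, which admits $Y=L_d$ and makes the literal statement false. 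So (a) should simply be read with $0\le c<d$, as the paper implicitly does.
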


\begin{proof} 
By \Cref{obs:symmetry}, we can assume without loss of generality that $Y\subseteq L_d$. By naturally identifying the binary vectors in $L_d$ with $d$-element subsets of $[2d-1]$, we can view $Y$ as a family of subsets.

We first verify (a). Choose a real number $c_1\geq c$ so that $|Y|=\binom{2d-1-c_1}{d}$. By applying \Cref{thm:lovasz} with $U=[2d-1]$, $m=d$, $q=1$, and $\mathcal A=Y$, we obtain that
\begin{align*}
    |N(Y)| \geq \binom{2d-1-c_1}{d-1} = \frac{d}{d-c_1}\cdot |Y| \geq \left(1+\frac{c_1}{d}\right)|Y| \geq \left(1+\frac{c}{d}\right)|Y|.
\end{align*}   

To see (b), observe that when $|Y|\leq d^6$, we have $|Y|=\binom{2d-1-c_1}{d}$ for some real number $c_1\geq d-8$ (assuming $d$ is sufficiently large, since $\binom{2d-1-(d-8)}{d} = \binom{d+7}{7} \gg d^6$). Thus, we have 
\[
    |N(Y)| \geq \frac{d}{d-c_1}\cdot |Y| \geq \frac{d}{8}|Y|. \qedhere
\]
\end{proof}

\section{Graph containers}\label{sec:containers}

In this section, we introduce graph container lemmas, which we will use in subsequent sections to bound the number of specific vertex subsets in our bipartite graphs. While this technique was initially pioneered by Sapozhenko \cite{sapozhenko1987number} to enumerate independent sets in a certain class of bipartite graphs, the specific formulation we employ here is due to Galvin \cite{galvin2003homomorphisms} -- particularly, the introduction of the associated sets $B$ and $H$, as well as the use of approximating quadruples.

\subsection{The $\psi$-approximating quadruples}\label{sec:covering-approx}

Given a subset $A$, we define the following sets associated with $A$.

\begin{mydef}\label{def:agbh}
Let $\Gamma=(V, E)$ be a bipartite graph with vertex bipartition $V=\cE\cup\cO$.
For every $A\subseteq \cE$, define the following four sets of vertices associated with $A$:
\begin{align*}
    G=G(A)&:=N(A),\\
    [A]&:=\{v\in\cE:N(v)\subseteq N(A)\},\\
    B=B(A)&:=\{v\in \cO:N(v)\subseteq A\},\\
    H=H(A)&:=N(B).
\end{align*}
Let $g=|G|$, $a=|A|$, $a'=|[A]|$, $b=|B|$, and $h=|H|$. Let $t_1=g-a'$ and $t_2=h-b$.
\end{mydef}

Note that from the above definitions, we have $G=N([A])$ and $B=[B]$. Also, when $\Gamma$ is a $d$-regular graph, we always have $b\leq h\leq a\leq a'\leq g$ (since $|N(S)|\geq|S|$ for all $S\subseteq V(\Gamma)$). 

\begin{mydef}\label{def:families}
Let $\Gamma=(V, E)$ be a bipartite graph with vertex bipartition $V=\cE\cup\cO$. For integers $b\le h\le a'\le g$ and $v\in \cO$, define
\begin{align*}
\mathcal H(a',g,b,h,v)
&:=
\bigl\{
A\subseteq \cE:\ 
A\text{ is $2$-linked},\ |[A]|=a',\ |G|=g,\ |B|=b,\ |H|=h,\ v\in G
\bigr\},\\
\mathcal H(a',g,b,h)
&:=
\bigl\{
A\subseteq \cE:\ 
A\text{ is $2$-linked},\ |[A]|=a',\ |G|=g,\ |B|=b,\ |H|=h
\bigr\}.
\end{align*}
\end{mydef}

The following definition introduces a tuple of sets that closely approximates $A$ along with its associated sets from \Cref{def:agbh}.

\begin{mydef}[$\psi$-approximating quadruple]\label{def:psi-approx-quad}
Let $\Gamma=(V, E)$ be a $d$-regular bipartite graph with vertex bipartition $V=\cE \cup \cO$. For $0\leq\psi\leq d$, a \emph{$\psi$-approximating quadruple} for a set $A\subseteq \cE$ is a quadruple $(F,S,P,Q)\in 2^{\cO}\times 2^{\cE}\times 2^{\cE}\times 2^{\cO}$ such that:
\begin{enumerate}
\item $F\subseteq G$ and $S\supseteq [A]$,
\item $d_F(x)\ge d-\psi$ for every $x\in S$,
\item $d_{\cE\setminus S}(y)\ge d-\psi$ for every $y\in \cO\setminus F$,
\item $P\subseteq H$ and $Q\supseteq B$,
\item $d_P(y)\ge d-\psi$ for every $y\in Q$,
\item $d_{\cO\setminus Q}(x)\ge d-\psi$ for every $x\in \cE\setminus P$.
\end{enumerate}
\end{mydef}

\subsection{Container lemmas for approximating quadruples}

In this subsection, we state two results (\Cref{cor:H-spectral-expander} and \Cref{cor:H-middle-layers}) based on container-type arguments: for bipartite $(n,d,\lambda)$-graphs and for the middle layers of the Hamming cube, 
there exists a small family of tuples of sets that can serve as $\psi$-approximating quadruples 
for all sets $A\subseteq \cE$ with fixed parameters $a',g,b,h$ (and $v$). 
While similar container lemmas were established by Galvin \cite{galvin2003homomorphisms}, our host graphs differ slightly from those considered in his work. For completeness and ease of reference, we provide the full proofs adapted to our setting in the appendix. The ideas in the appendix closely follow \cite{galvin2003homomorphisms, galvin2019independent}.

Looking ahead, we remark that we will apply the results in this section with the choice of $\psi = \sqrt{d}$.

\begin{lem}\label{prop:approx-quad-spectral-expander}
Let $\Gamma$ be a bipartite $d$-regular graph with vertex bipartition $V(\Gamma)=\cE\cup \cO$.
Fix $b\le h\le a'\le g$ and $v\in \cO$. Then there exists a family
\[
\mathcal U_{\mathrm{quad}}(a',g,b,h,v)\subseteq 2^{\cO}\times 2^{\cE}\times 2^{\cE}\times 2^{\cO}
\]
with
\[
|\mathcal U_{\mathrm{quad}}(a',g,b,h,v)|
\le
2^{O(g\log d/\psi)},
\]
such that every $A\in \mathcal H(a',g,b,h,v)$ has a $\psi$-approximating quadruple in $\mathcal U_{\mathrm{quad}}(a',g,b,h,v)$.
\end{lem}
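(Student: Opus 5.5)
We follow Galvin's two-stage container construction (Sapozhenko's method), applied twice: once to the pair $([A],G)$, which yields $(F,S)$, and once to the pair $(B,H)$, which yields $(P,Q)$. Throughout we use only $d$-regularity and the fact that $A$ is $2$-linked with $v\in G$. The number of choices for $(F,S)$ and for $(P,Q)$ will each be bounded by $2^{O(g\log d/\psi)}$; since $h\le g$, multiplying the two counts gives the claim.

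\emph{Stage 1 ($\varphi$-approximation of $G$).} Set $\varphi=d/2$. We first find $F'\subseteq G$ with $N(F')\supseteq [A]$ and $|N_{[A]}(x)|$ large for all $x\in F'$, cheaply. Take a random subset $T_0\subseteq G$ including each vertex with probability $p=\Theta(\log d/d)$, and let $T_1$ be a cover, from \Cref{lem:cover}, of the vertices of $[A]$ having few neighbours in $N(T_0)$. Then $T=T_0\cup T_1$ has size $O(g\log d/d)$ and satisfies $G'\!:=\!\{y\in\cO: d_{N(T)}(y)>d-\varphi\}\supseteq G$ approximately. Because $A$ is $2$-linked and $G=N(A)$, the set $T$ can be taken to be $O(1)$-linked (distance at most $4$) and to contain a vertex near $v$. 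By \Cref{lem:linked-set-count}, $T$ is then specified by $2^{O(|T|\log d)}=2^{O(g\log^2 d/d)}$ choices. From $T$ we read off $F'=N(T)\cap\ldots$ together with an auxiliary set $S'$. The edge counting $e([A],G)$ then controls $|G\setminus F'|$ and $|S'\setminus [A]|$ by $O(g/d)$-type quantities.

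\emph{Stage 2 (refine to $\psi$).} Given $(F',S')$, we build $(F,S)$ satisfying conditions (1)–(3) by the standard refinement. Vertices of $S'$ with $d_{F'}<d-\psi$ have many neighbours in $G\setminus F'$. We therefore cover the deficit via \Cref{lem:cover}, or else specify the at most $O(g/\psi)$-many \emph{bad} vertices directly. Each bad vertex lies in $N^2$ of a set of size $O(g)$, costing $\log(d^2)$ bits per vertex. This step has total cost $2^{O(g\log d/\psi)}$, which dominates the cost of Stage 1 when $\psi\le d/\log d$; otherwise the bound is absorbed, since $2^{O(g\log^2 d/d)}\le 2^{O(g\log d/\psi)}$ whenever $\psi\le d/\log d$. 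For larger $\psi$, it suffices to use Stage 1 with $\varphi=\psi$.

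We repeat the same argument for $B\subseteq\cO$ and $H=N(B)$, with the roles of $\cE$ and $\cO$ swapped, noting $h\le g$. The only subtlety here is that $B$ need not be $2$-linked. We handle this by encoding $P,Q$ relative to the already chosen $(F,S)$: we have $B\subseteq\{y:N(y)\subseteq A\}$, so $B$ and $H$ lie inside $N^2$-neighbourhoods of $S$, and the random-subset/cover sets can be specified as subsets of a ground set of size $O(d^2 g)$. Their sizes are $O(h\log d/\psi)$, so the cost is $\binom{O(d^2g)}{O(g\log d/\psi)}=2^{O(g\log^2 d/\psi)}$. This is one extra log factor too many, so in the formal proof we instead force linkedness by including in the specifying set the $2$-linked skeleton coming from $A$.

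I expect the main obstacle to be the Stage 2 bookkeeping: showing that the number of vertices needing individual specification is $O(g/\psi)$, using only the degree conditions, without expansion.
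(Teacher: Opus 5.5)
Your skeleton is the paper's. A random $\Theta(\log d/d)$-subset of $G$ plus a fix-up gives a $4$-linked $T\subseteq G$ of size $O(g\log d/d)$ with $[A]\subseteq N(T)$; this is the paper's mutual cover, \Cref{lem:phi-approx-spectral}, counted via \Cref{lem:linked-set-count}. A greedy refinement turns it into $(F,S)$ (\Cref{lem:psi-approx-spectral}), and the same is done for $(B,H)$. There is one real gap, in the $(P,Q)$ part, and several repairable slips in the $(F,S)$ part.

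\textbf{Slips in the $(F,S)$ part.} Stage 1 is not well defined as written. Since $T_0\subseteq\mathcal{O}$, we have $N(T_0)\subseteq\mathcal{E}$, so vertices of $[A]$ have no neighbours in it, and ``$F'=N(T)\cap\ldots$'' would lie in $\mathcal{E}$. But no $\varphi$-approximation is needed: you can start the greedy from $F'=T$. Your claim that $|G\setminus F'|$ and $|S'\setminus[A]|$ are of order $g/d$ is false without expansion: edge counting gives only $O(t_1)$, and $t_1=g-a'$ can be of order $g$. Fortunately it is not needed. What you call the main obstacle is a one-line count. While some $u\in[A]$ has $d_{G\setminus F'}(u)>\psi$, add $N(u)\subseteq G$ to $F'$. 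Each step adds more than $\psi$ new vertices of $G$, so there are at most $g/\psi$ steps, each $u$ taken from $N(T)$, a set of size $O(g\log d)$. Condition (3), which your Stage 2 never addresses, needs a second pass. Set $S'=\{x: d_{F'}(x)\ge d-\psi\}$, so $|S'|\le\frac{d}{d-\psi}g\le 2g$. While some $w\in\mathcal{O}\setminus G$ has $d_{S'}(w)>\psi$, delete $N(w)$ from $S'$. Each deletion removes more than $\psi$ vertices, none in $[A]$, so at most $2g/\psi$ vertices $w\in N(S')$ are used. Finally set $F=F'\cup\{y: d_S(y)>\psi\}$. Both greedy sets cost $2^{O(g\log d/\psi)}$ to specify, and together with $T$ they determine $(F,S)$.

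\textbf{The gap in the $(P,Q)$ part.} Your final plan there (``include the $2$-linked skeleton coming from $A$'') is never specified. Meanwhile the encoding you discarded actually works; the ``extra log factor'' is a miscount. You are right that $B$ need not be $2$-linked. The paper's proof glosses over exactly this: it asserts $B\in\mathcal G(b,h,w)$, a family defined to consist of $2$-linked sets. But $H\subseteq A\subseteq S$ with $|S|\le 2g$, so the cover $T_B\subseteq H$ of $B$ can be chosen inside $S$. Its size is $O(h\log d/d)$, not $O(h\log d/\psi)$. The two greedy sets for $B$ (analogues of the $u$'s and $w$'s) have sizes at most $h/\psi$ and $2h/\psi$. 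They lie in $N(T_B)$ and in $N(S'_B)$ respectively, where $S'_B$ is the analogue of $S'$; these ground sets have sizes $O(g\log d)$ and $O(dg)$. Since $\sum_{i\le k}\binom{N}{i}\le(eN/k)^k$ and $(eN/k)^k$ is increasing in $k\le N$, you may replace $h$ by $g$ in all these sizes. So each fixed $(F,S)$ admits $2^{O(g\log^2 d/d)+O(g\log d/\psi)}$ choices of $(P,Q)$, just as for $(F,S)$, and no linkedness of $B$ is needed.

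Finally, drop the suggestion to rerun Stage 1 with $\varphi=\psi$ for large $\psi$: changing $\varphi$ does not lower the $2^{O(g\log^2 d/d)}$ cost of specifying $T$. Your argument and the paper's both give $2^{O(g\log^2 d/d)+O(g\log d/\psi)}$. This is the stated bound for $\psi\le d/\log d$, in particular for $\psi=\sqrt d$.
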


We prove \Cref{prop:approx-quad-spectral-expander} in \Cref{app:1}.

\begin{lem}\label{prop:approx-quad-middle-layers}
Let $\Gamma=Q_{2d-1}^{\mathrm{mid}}$ with $\cE=L_{d-1}$ and $\cO=L_{d}$.
Fix $b\le h\le a'\le g$, and assume $g\ge d^2/100$ and $a'\leq (1-\frac{1}{10})\binom{2d-1}{d}$.
Then there exists a family
\[
\mathcal U_{\mathrm{quad}}(a',g,b,h)\subseteq 2^{\cO}\times 2^{\cE}\times 2^{\cE}\times 2^{\cO}
\]
with
\[
|\mathcal U_{\mathrm{quad}}(a',g,b,h)|
\le
2^{O(t_1\log d/\psi)}\cdot 2^{O(t_2\log d/\psi)},
\]
such that every $A\in \mathcal H(a',g,b,h)$ has a $\psi$-approximating quadruple in $\mathcal U_{\mathrm{quad}}(a',g,b,h)$.
\end{lem}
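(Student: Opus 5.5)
This is Galvin's container construction, run in two stages: first build $(F,S)$ from $A$, $G$, $[A]$, then build $(P,Q)$ from $B$, $H$ by the same argument with the roles of $\cE$ and $\cO$ reversed. The new feature compared with \Cref{prop:approx-quad-spectral-expander} is that the cost must be $t_1\log d/\psi$ rather than $g\log d/\psi$. To get this we use the isoperimetry of \Cref{lem:isoperimetry}, which turns $t_1=g-a'$ into a lower bound of order $g/d$ (up to logarithms) under the hypotheses $g\ge d^2/100$ and $a'\le \frac{9}{10}\binom{2d-1}{d}$.

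\textbf{Step 1: an isoperimetric lower bound on $t_1$.} Apply \Cref{lem:isoperimetry} to $[A]\subseteq L_{d-1}$, using $G=N([A])$. If $a'\le d^6$, part (b) gives $g\ge da'/8$, so $t_1\ge g/2$. Otherwise $a'\le\frac{9}{10}\binom{2d-1}{d}$, and part (a) with some $c=\Omega(1)$ gives $t_1\ge \frac{c}{d}a'=\Omega(g/d)$, since $g\le d\,a'$ trivially. In fact we expect the stronger bound $g\le(1+O(1))a'$ in this range, which gives $t_1=\Omega(g/d)$ directly. The same reasoning applies to $B$: $H=N(B)$ and $b\le a'$, so either $t_2=\Omega(h/d)$, or $b$ is tiny and $t_2\ge h/2$.

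\textbf{Step 2: the $\varphi$-approximation (Sapozhenko/Galvin).} Take $\varphi=d/2$. Let $G'=\{y\in G: d_A(y)>\varphi\}$. Choose a random subset of $A$ with density about $\log d/d$ and take its neighbourhood; this gives a set $T'\subseteq G$ with $|T'|=O(g\log d/d)$ whose neighbourhood nearly covers $G'$. By 2-linkedness of $A$ and $v\in G$ (or by fixing a root, at the price of a polynomial factor, which is absorbed), $T'$ is $O(1)$-linked, so by \Cref{lem:linked-set-count} it can be specified in $2^{O(g\log^2 d/d)}$ ways. Then use \Cref{lem:cover} to cover the remainder $G\setminus N(N(T'))$ cheaply. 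Using the edge count $d\,a'\le$ (edges into $G$) together with $g-a'=t_1$, we get $|G\setminus G'|=O(t_1)$, so listing these vertices costs $2^{O(t_1\log d)}$ naïvely. This is too much, so instead we only need an $F^*$ with $G'\subseteq F^*\subseteq G$, obtained in $2^{O(t_1\log^2 d/d)}$ ways. By Step 1 this cost is within $2^{O(t_1\log d/\psi)}$ when $\psi=\sqrt d$ (since $\log d/d\ll 1/\sqrt d$).

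\textbf{Step 3: refinement to $\psi$.} Given $F^*$, let $S^*=\{x\in\cE: d_{F^*}(x)\ge d-\varphi\}$. Refine by Galvin's two-sided procedure. Vertices of $[A]$ with many neighbours outside $F^*$, and vertices $y\notin F^*$ with many neighbours in $S^*$, are specified via small subsets of size $O(t_1/\psi)$ drawn from a ground set of polynomial size in $g$. The edge-counting inequality $|E([A],G\setminus F^*)|\le d\,t_1$ bounds the number of such choices by $2^{O(t_1\log d/\psi)}$. This yields $(F,S)$ satisfying conditions (1)–(3). Running Steps 2 and 3 for $(B,H)$ gives $(P,Q)$ satisfying (4)–(6) at cost $2^{O(t_2\log d/\psi)}$. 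The family $\mathcal U_{\qd}$ is the product of the two families.

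\textbf{Main obstacle.} The delicate part is Step 2 for $(P,Q)$. Unlike $A$, the set $B$ need not be 2-linked, and $t_2$ may be small relative to $h$. We plan to first decompose $B$ into 2-linked components and count each component separately. For components with $h_i\le d^6$, we use the bound $t_{2,i}\ge h_i/2$ from part (b). Summing the costs then gives $2^{O(t_2\log d/\psi)}$. Locating the components without extra cost is the point we expect to need the most care: we intend to root each component inside $H\subseteq A$'s structure, which is already determined up to the approximation $S$.
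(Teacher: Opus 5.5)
\textbf{Gap in Step 2.} The gap is in Step 2, which carries the real content of this lemma. You say $T'$ can be specified in $2^{O(g\log^2 d/d)}$ ways, and then assert that $F^*$ comes in $2^{O(t_1\log^2 d/d)}$ ways. The first factor is never reconciled with the budget $2^{O(t_1\log d/\psi)}$.

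When $a'>d^6$, \Cref{lem:isoperimetry}(a) only gives $t_1=\Omega(g/d)$, and this is sharp. Take $A=\{x\in L_{d-1}:x_1=1\}$: it is 2-linked, $[A]=A$, $a'\approx\frac12|L_{d-1}|$, and $t_1=g/d$ exactly. There $g\log^2 d/d=t_1\log^2 d$, which exceeds $t_1\log d/\sqrt d$ by a factor $\sqrt d\log d$.

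The step you pass over fails for the same reason. $N(N(T'))$ is not contained in $G$, so to get $F^*\subseteq G$ you need $N_{[A]}(T')$. Since $e(G,\cE\setminus[A])=t_1d$, a set of density $\log d/d$ in $G$ sends about $t_1\log d$ edges into $\cE\setminus[A]$, far too many to name within budget. Density $\log d/d$ is exactly what \Cref{prop:approx-quad-spectral-expander} uses; it is harmless there only because $t_1=\Omega(g)$.

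A smaller slip in Step 1: $g\le da'$ yields only $t_1=\Omega(g/d^2)$. The correct route is $g\ge(1+c/d)a'$.

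\textbf{The missing idea.} You need the middle-layer codegree property, which allows sampling density $\Theta(\log d/d^2)$. Two vertices of $L_{d-1}$ have at most one common neighbour. So for $y\in G^\varphi$ (with $\varphi=d/2$, and degrees measured into $[A]$, not $A$), the sets $N(x)\setminus\{y\}$ for $x\in N_{[A]}(y)$ are pairwise disjoint, and $|N(N_{[A]}(y))|=1+d_{[A]}(y)(d-1)>d^2/4$.

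Hence a $p$-random $T_0\subseteq G$ with $p=20\ln d/(\varphi d)$ has three properties:
it misses only $O(g/d^{10})$ vertices of $G^\varphi$;
it has $|T_0|=O(g\log d/d^2)=O(t_1\log d/d)$;
it sends only $O(t_1dp)=O(t_1\log d/d)$ edges into $\cE\setminus[A]$.
Naming those edges inside $N(T_0)$ recovers $N_{[A]}(T_0)$.

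Set $L=N(N_{[A]}(T_0))\cup T_0'$, where $T_0'$ is the missed part of $G^\varphi$. Take a Lov\'asz--Stein cover $T_1\subseteq G\setminus L$ of $[A]\setminus N(L)$ of size $O(t_1\log d/d)$, which is possible since $|G\setminus G^\varphi|\le 2t_1$. Then $T_0\cup T_0'\cup T_1$ is 8-linked, and $F'=L\cup T_1$ costs $2^{O(t_1\log^2 d/d)}$ in total.

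Every logarithm here, and in your Step 3, is $O(\log d)$ only because $g/t_1=O(d)$ and the ground sets have size $g\cdot\mathrm{poly}(d)$ (the paper uses $|S''|\le d^3g$). ``Polynomial in $g$'' is not enough, since $\log g$ can be $\Theta(d)$.

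\textbf{The $B$ side.} Here your plan is only announced, not carried out. Rooting costs $|\cO||\cE|=2^{O(d)}$, not a polynomial factor. It is absorbed only because $t_1,t_2=\Omega(d^2)$ when $g,h\ge d^2/100$.

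For $h<d^2/100$ the paper builds no container for $B$ at all. It takes $(P,Q)=(H,B)$ and chooses $H\subseteq S$, with $|S|\le d^3g$, at cost $2^{O(h\log g)}$. It then charges this to $t_1$ by splitting into the cases $g\le d^6$ and $g>d^6$.

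Your observation that $B$ need not be 2-linked is fair. The paper's \Cref{cor:psi-approx-quad} treats $B$ as a member of $\cG(b,h,v')$ without comment. Still, your component-wise rooting would have to be shown to stay within budget, and you have not done that.
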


We prove \Cref{prop:approx-quad-middle-layers} in \Cref{app:2}.

We also state a ``reconstruction'' lemma, which shows that only few sets $A$ share the same $\psi$-approximating quadruple.

\begin{lem}\label{lem:reconstruction}
Let $\Gamma$ be a bipartite $d$-regular graph with vertex bipartition $V(\Gamma)=\cE\cup \cO$. Fix $b\le h\le a'\le g$, and assume that
\beq{eq:t}
t_1=\Omega(g/d),
\qquad
t_2=\Omega(h/d).
\enq
 Then for every quadruple $(F,S,P,Q)$ satisfying conditions (2), (3), (5), and (6) of \Cref{def:psi-approx-quad} with $\psi=\sqrt d$, the number of $2$-linked sets $A\subseteq \cE$ with
\[
|[A]|=a',
\qquad
|G|=g,
\qquad
|B|=b,
\qquad
|H|=h,
\]
for which $(F,S,P,Q)$ also satisfies conditions (1) and (4) of \Cref{def:psi-approx-quad} is at most
\[
2^{\,g-b- \Omega(t_1/\log d)- \Omega(t_2/\log d)}.
\]
\end{lem}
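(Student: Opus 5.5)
The plan is to reconstruct $A$ in three stages: first $G$ from $(F,S)$, then $[A]$ from $G$, then $A$ inside the interval $[H,[A]]$ using $(P,Q)$. Two observations make this work. Since $[A]=\{v\in\cE: N(v)\subseteq G\}$, the set $G$ determines $[A]$. Since $B=\{v\in\cO:N(v)\subseteq A\}$ and $H=N(B)$, we always have $H\subseteq A\subseteq [A]$. The target exponent splits as
\[
g-b=(g-a')+(a'-h)+(h-b)=t_1+(a'-h)+t_2,
\]
so it suffices to show two things. First, the pair $(F,S)$ determines $G$ (hence $[A]$) in at most $2^{t_1-\Omega(t_1/\log d)}$ ways, up to a factor absorbed into the $t_2$ saving. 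Second, given $[A]$, the pair $(P,Q)$ leaves at most $2^{(a'-h)+t_2-\Omega(t_2/\log d)}$ choices for $A$. These two stages are symmetric: $(F,S)$ plays for $(G,[A])$ the role that $(P,Q)$ plays for $(H,B)$, with the sides swapped. I will therefore write the argument once and apply it twice.

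For the first stage, conditions (1)--(3) give $F\subseteq G\subseteq N(S)$. Every $y\in N(S)\setminus F$ lies in $\cO\setminus F$, so by (3) it has at least $d-\psi$ neighbours outside $S$. On the other hand, every $x\in S$ sends at most $\psi$ edges out of $F$. Double counting $E(S,\cO\setminus F)$ gives
\[
|N(S)\setminus F|(1)\le \psi|S| \quad\text{and}\quad |G\setminus F|\cdot 1 \le \psi |S|,
\]
and, more usefully, it gives $d|S|\le d|F|+\psi|S|$ in the form $|S|\le |F|+\tfrac{\psi}{d-\psi}\,|N(S)\setminus F|$. Together with $[A]\subseteq S$ and $|G|=g$, this pins down $|S|-a'$ and $g-|F|$ to within $O(\psi g/d)=O(g/\sqrt d)$ of $t_1$.

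I will then split into two cases, following Galvin.
\begin{itemize}
\item[(i)] If $|F|$ is close to $g$, specify $G\setminus F$ as a small subset of $N(S)\setminus F$ (size $\le \psi|S|$). This costs $\binom{\psi |S|}{\le g-|F|}$, which is $2^{O((g-|F|)\log d)}$ and hence much less than $2^{t_1/2}$ when $g-|F|\ll t_1/\log d$.
\item[(ii)] Otherwise $F$ is a genuinely lossy approximation. Instead, specify $[A]$ as a subset of $S$ of prescribed size $a'$. This costs $\binom{|S|}{a'}$, and I will compare it with $2^{t_1}$ using $|S|-a'\le t_1-\Omega(t_1)$, where the $\Omega(t_1)$ comes from the edge count showing that many vertices of $G\setminus F$ cannot have a neighbour in $S\setminus[A]$.
\end{itemize}
The same two-case argument with $(P,Q)$ and $(H,B)$ handles the second stage. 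The $2^{a'-h}$ factor simply counts $A$ in the interval $[H,[A]]$.

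The main obstacle is making the saving $\Omega(t/\log d)$ beat the approximation losses. Naive losses are of order $\psi g\log d/d=g\log d/\sqrt d$, while the hypothesis \eqref{eq:t} only guarantees $t_1/\log d\gtrsim g/(d\log d)$. So I cannot pay any entropy proportional to $g$. All costs must be charged to the symmetric differences $G\setminus F$ and $S\setminus[A]$, which are themselves $O(t_1)$ by the degree counting above, and one must use $\binom{m}{k}\le 2^{k\log(em/k)}$ with $m/k=O(d)$. This is precisely where the factor $\log d$ in the denominator of the saving arises. I would first try to prove the refined bound $|G\setminus F|+|S\setminus[A]|\le O(t_1)$ with an explicit constant below $1$ relative to the trivial count. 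If that fails, I would fall back on Galvin's original weighting argument, choosing the threshold between the cases at $t_1/\log d$.
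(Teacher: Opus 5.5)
Your case (i) is the paper's ``$S$ slack'' branch, and counting $A$ between $H$ and a known $[A]$ by $2^{a'-h}$ is fine. The gap is case (ii), and behind it the product structure ``(number of $G$) $\times$ (number of $A$ with $H\subseteq A\subseteq[A]$)''. Specifying $[A]$ as an $a'$-subset of $S$ costs $\binom{|S|}{k}$ with $k=|S|-a'$.

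In case (ii) you only know $g-|F|=\Omega(t_1/\log d)$. Via $|S|\le|F|+O(\psi t_1/d)$ this gives $k\le t_1-\Omega(t_1/\log d)$, not the $t_1-\Omega(t_1)$ you assert. Even $k=t_1/2$ would be useless. The hypothesis $t_1=\Omega(g/d)$ allows $|S|/t_1$ of order $d$, and then $\binom{|S|}{t_1/2}\ge(2|S|/t_1)^{t_1/2}=2^{\Theta(t_1\log d)}$, far above $2^{t_1}$.

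Sharper estimates will not fix this. Summing $2^{a'-h}$ over all candidates for $[A]$ counts each $A$ once for every candidate containing it, ignoring that $N(A)$ must equal $G$ exactly. Compared with the plain count $2^{|S|-h}$ of sets between $H$ and $S$, it overcounts by $\binom{|S|}{k}2^{-k}$. Mirroring the argument for $(P,Q)$, i.e.\ choosing $B$ as a $b$-subset of $Q$ when $P$ is far from $H$, fails the same way, since $\binom{|Q|}{|Q|-b}$ can be $2^{\Theta(t_2\log d)}$.

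The missing idea is that in the lossy cases you never determine $G$ (or $H$). Instead, let the approximating set itself bound $A$, count with a full power set, and handle the two pairs jointly rather than as independent factors. Concretely:
\begin{enumerate}
\item Call $S$ tight if $|S|<g-t_1/(10\log d)$, and $Q$ tight if $|Q|<b+t_2/(10\log d)$.
\item First fix a known $D\subseteq A$. If $Q$ is tight, choose $B\subseteq Q$ in at most $\binom{O(h)}{\le t_2/(10\log d)}\le 2^{t_2/5}$ ways (using $t_2=\Omega(h/d)$), and put $D=H$, so $|D|=b+t_2$. If $Q$ is slack, put $D=P$; then $|Q|\le|P|+O(\psi t_2/d)$ gives $|D|\ge b+t_2/(20\log d)$.
\item If $S$ is tight, choose $A\setminus D$ as an arbitrary subset of $S\setminus D$. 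This gives at most $2^{|S|-|D|}\le 2^{g-t_1/(10\log d)-|D|}$ choices, and no $G$ is ever specified.
\item If $S$ is slack, then $|F|\ge g-t_1/(5\log d)$. Specify $G\setminus F$ as in your case (i), which costs at most $2^{(2/5+o(1))t_1}$ and determines $[A]$. Then choose $A\setminus D\subseteq[A]\setminus D$.
\end{enumerate}
Each of the four tight/slack combinations gives $2^{g-b-\Omega(t_1/\log d)-\Omega(t_2/\log d)}$, using $g-b=t_1+(a'-h)+t_2$ as in your own bookkeeping.
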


We prove \Cref{lem:reconstruction} in \Cref{app:3}.

Combining \Cref{prop:approx-quad-spectral-expander} and \Cref{prop:approx-quad-middle-layers} with \Cref{lem:reconstruction}, we obtain the following results.

\begin{cor}\label{cor:H-spectral-expander}
Fix $\delta > 0$. Let $\Gamma$ be a bipartite $(n,d,\lambda)$-graph with vertex bipartition $V(\Gamma)=\cE\cup \cO$, such that $\lambda\le (1-\delta)d$.
Fix $b\le h\le a'\le g$ and $v\in \cO$, and assume $a'\le 3n/4$.
Then
\[
|\mathcal H(a',g,b,h,v)|
\le
2^{\,g-b-\Omega(t_1/\log d)}.
\]
\end{cor}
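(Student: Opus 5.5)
The plan is to combine the container count of \Cref{prop:approx-quad-spectral-expander} with the reconstruction bound of \Cref{lem:reconstruction}, taking $\psi=\sqrt d$. Every $A\in\mathcal H(a',g,b,h,v)$ has a $\sqrt d$-approximating quadruple in $\mathcal U_{\qd}(a',g,b,h,v)$, and that family has size $2^{O(g\log d/\sqrt d)}$. Each quadruple there satisfies (2), (3), (5), (6) of \Cref{def:psi-approx-quad}. So \Cref{lem:reconstruction} bounds the number of $A$ assigned to a given quadruple by $2^{g-b-\Omega(t_1/\log d)-\Omega(t_2/\log d)}$, and we may discard the $t_2$ term since it is nonnegative. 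This gives
\[
|\mathcal H(a',g,b,h,v)|\le 2^{O(g\log d/\sqrt d)}\cdot 2^{g-b-\Omega(t_1/\log d)}.
\]

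For this to work we must verify the hypotheses \eqref{eq:t} of \Cref{lem:reconstruction}, namely $t_1=\Omega(g/d)$ and $t_2=\Omega(h/d)$. Both come from \Cref{prop:expand-3/4}, which needs sets of size at most $3n/4$.

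\textbf{The bound on $t_1$.} We have $[A]\subseteq\cE$ and $|[A]|=a'\le 3n/4$, so $g=|N([A])|\ge(1+\delta/4)a'$. Hence
\[
t_1=g-a'\ge \tfrac{\delta}{4}a'\ge \tfrac{\delta/4}{1+\delta/4}\,g=\Omega_\delta(g),
\]
which is much stronger than $\Omega(g/d)$.

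\textbf{The bound on $t_2$.} We have $B\subseteq\cO$ and $b\le a'\le 3n/4$, so $h=|N(B)|\ge(1+\delta/4)b$, and therefore $t_2=h-b=\Omega_\delta(h)$. (If $b=0$ then $h=0$ and the condition is vacuous.)

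\textbf{Absorbing the container cost.} The only remaining step is to absorb the container factor $2^{O(g\log d/\sqrt d)}$ into the saving $2^{-\Omega(t_1/\log d)}$. This is the one place where the strong bound $t_1=\Omega_\delta(g)$ is needed, since $t_1=\Omega(g/d)$ alone would not suffice. Since $t_1\ge c_\delta g$, the saving is at least $2^{-c'_\delta g/\log d}$. The container cost is $2^{O(g\log d/\sqrt d)}$, and
\[
\frac{\log d}{\sqrt d}=o\!\left(\frac{1}{\log d}\right),
\]
so for $d\ge d_0(\delta)$ the cost is at most half the saving in the exponent. We conclude $|\mathcal H(a',g,b,h,v)|\le 2^{g-b-\Omega(t_1/\log d)}$, with the implied constants depending on $\delta$.
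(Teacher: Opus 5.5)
Your proposal is correct and follows the paper's proof essentially verbatim: you take the containers from \Cref{prop:approx-quad-spectral-expander} with $\psi=\sqrt d$, apply the reconstruction bound of \Cref{lem:reconstruction}, and absorb the $2^{O(g\log d/\sqrt d)}$ container cost using $t_1=\Omega_\delta(g)$ from \Cref{prop:expand-3/4}; you also verify the hypothesis $t_2=\Omega(h/d)$ by applying \Cref{prop:expand-3/4} to $B\subseteq\cO$, which the paper leaves implicit. One small slip: your intermediate step $\frac{\delta}{4}a'\ge\frac{\delta/4}{1+\delta/4}\,g$ goes the wrong way, since $a'\le g/(1+\delta/4)$, but the conclusion $t_1\ge\frac{\delta/4}{1+\delta/4}\,g$ still holds directly from $t_1=g-a'\ge g-\frac{g}{1+\delta/4}$.
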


\begin{proof}
    We know from \Cref{prop:expand-3/4} that if $a'\leq 3n/4$, then $g\geq (1+\delta/4)a'$, so $t_1=g-a'\geq (1-\frac{1}{1+\delta/4})g\geq   (\delta/8)g$. In particular, for any fixed constant $\delta>0$ we have $g\log d/\psi\ll t_1/\log d$. By \Cref{prop:approx-quad-spectral-expander} and \Cref{lem:reconstruction}, we obtain that
\begin{align*}
       |\mathcal H(a',g,b,h,v)|&\leq |\mathcal U_{\mathrm{quad}}(a',g,b,h,v)|\cdot 2^{\,g-b- \Omega(t_1/\log d)- \Omega(t_2/\log d)}\\
       &=2^{O(g\log d/\psi)}\cdot 2^{\,g-b- \Omega(t_1/\log d)- \Omega(t_2/\log d)}\\
       &=2^{\,g-b-\Omega(t_1/\log d)}. \qedhere
   \end{align*}
\end{proof}

\begin{cor}\label{cor:H-middle-layers}
Let $\Gamma=Q_{2d-1}^{\mathrm{mid}}$ with $\cE=L_{d-1}$ and $\cO=L_{d}$.
Fix $b\le h\le a'\le g$, and assume $g\ge d^2/100$ and $a'\leq (1-\frac{1}{10})\binom{2d-1}{d}$.
Then
\[
|\mathcal H(a',g,b,h)|
\le
2^{\,g-b-\Omega(t_1/\log d)}.
\]
\end{cor}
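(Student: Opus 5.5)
The plan is to follow the proof of \Cref{cor:H-spectral-expander}: combine the container bound of \Cref{prop:approx-quad-middle-layers} with the reconstruction bound of \Cref{lem:reconstruction}, taking $\psi=\sqrt d$. Every $A\in\mathcal H(a',g,b,h)$ has a $\sqrt d$-approximating quadruple in $\mathcal U_{\mathrm{quad}}(a',g,b,h)$. So $|\mathcal H(a',g,b,h)|$ is at most $|\mathcal U_{\mathrm{quad}}(a',g,b,h)|$ times the maximum number of sets $A$ that a single quadruple can approximate. This gives
\[
|\mathcal H(a',g,b,h)|\le 2^{O(t_1\log d/\sqrt d)+O(t_2\log d/\sqrt d)}\cdot 2^{\,g-b-\Omega(t_1/\log d)-\Omega(t_2/\log d)}.
\]
Since $\log d/\sqrt d\ll 1/\log d$, the container cost is absorbed term by term: the $t_1$-parts combine to $-\Omega(t_1/\log d)$ and the $t_2$-parts to $-\Omega(t_2/\log d)\le 0$. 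This yields the claimed bound $2^{g-b-\Omega(t_1/\log d)}$.

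Note that, unlike the spectral case, the container bound here is already in terms of $t_1,t_2$, so no comparison of $g$ with $t_1$ is needed. What remains is to check the hypothesis \eqref{eq:t} of \Cref{lem:reconstruction}, namely $t_1=\Omega(g/d)$ and $t_2=\Omega(h/d)$. This is where the isoperimetry of \Cref{lem:isoperimetry} is used, and it is the main obstacle.

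For $t_1$, apply \Cref{lem:isoperimetry}(a) to $[A]\subseteq L_{d-1}$. Since $a'\le \frac{9}{10}\binom{2d-1}{d}$, we can write $a'\le\binom{2d-1-c}{d-1}$ for some absolute constant $c>0$. This holds because $\binom{2d-1-c}{d-1}/\binom{2d-1}{d-1}\approx 2^{-c}$ for bounded $c$, so a small $c$ such as $c=0.1$ suffices. It then gives $g\ge(1+c/d)a'$, hence $t_1=g-a'\ge \frac{c/d}{1+c/d}\,g=\Omega(g/d)$.

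For $t_2$, note that $B\subseteq L_d$ with $b\le a'$, so the same bound gives $h\ge(1+c/d)b$ and $t_2=\Omega(h/d)$. The edge case $b=0$ (so $h=0$) is trivial.

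The hypothesis $g\ge d^2/100$ is needed only for \Cref{prop:approx-quad-middle-layers} and is passed through unchanged. I also need the form of \Cref{lem:reconstruction} in which the implied constants in \eqref{eq:t} may be uniform, with the $\Omega$'s in the conclusion depending only on them. If the lemma strictly requires $t_1=\Omega(g/d)$ with a fixed constant, the absolute constant $c$ above provides exactly that.
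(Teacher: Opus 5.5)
Your proposal is correct and follows the paper's proof. You combine \Cref{prop:approx-quad-middle-layers} with \Cref{lem:reconstruction} at $\psi=\sqrt d$, absorb the $2^{O(t_i\log d/\sqrt d)}$ container cost into the $-\Omega(t_i/\log d)$ savings, and check \eqref{eq:t} via \Cref{lem:isoperimetry}(a) applied to $[A]$ (using $G=N([A])$) and to $B$ (using $b\le a'$). That is exactly what the paper's one-line appeal to isoperimetry amounts to.

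One small slip: \Cref{lem:isoperimetry}(a) needs $a'\le\binom{2d-1-c}{d}$, not the larger $\binom{2d-1-c}{d-1}$. This is harmless, since the same product estimate gives $\binom{2d-1-c}{d}/\binom{2d-1}{d}\to 2^{-c}>9/10$ for $c=1/10$.
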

\begin{proof}
    Note that $Q_{2d-1}^{\mathrm{mid}}$ satisfies \eqref{eq:t} by \Cref{lem:isoperimetry} (a). Therefore, by  \Cref{prop:approx-quad-middle-layers} and \Cref{lem:reconstruction},
   \begin{align*}
       |\mathcal H(a',g,b,h)|&\leq |\mathcal U_{\mathrm{quad}}(a',g,b,h)|\cdot 2^{\,g-b- \Omega(t_1/\log d)- \Omega(t_2/\log d)}\\
       &=2^{O(t_1\log d/\psi)}\cdot 2^{O(t_2\log d/\psi)}\cdot 2^{\,g-b- \Omega(t_1/\log d)- \Omega(t_2/\log d)}\\
       &=2^{\,g-b-\Omega(t_1/\log d)}. \qedhere
   \end{align*}
\end{proof}

\section{Reduction: from homomorphisms to partitions}\label{sec:reduction}

Let $\Gamma=(V, E)$ be a bipartite graph with vertex bipartition $V=\cE\cup\cO$, and fix an arbitrary vertex $v_0\in\cE$. For any $h\in\Hom_{v_0}(\Gamma)$ and any two vertices $u, v \in \cE$ at distance 2, their values under $h$ differ by exactly 0 or 2. By rescaling these values, we obtain a structure we call a \textit{legal labeling} of $\cE$.

\begin{mydef}[Legal labeling]\label{def:legal-labeling}
Let $\Gamma=(V, E)$ be a bipartite graph with vertex bipartition $V=\cE\cup\cO$, and $v_0 \in \cE$.
A \textit{legal labeling} of $\cE$ is a map $f:\cE\to \mathbb{Z}$ such that for every pair $u,v\in \cE$ with $\dist(u,v)=2$, we have $|f(u)-f(v)|\leq 1$. We say that a legal labeling $f$ is \textit{centered} (with respect to $v_0$) if $f(v_0)=0$.
\end{mydef}

For every centered legal labeling $f$ of $\cE$, we define its \textit{image} as 
\[
\mathrm{im}(f):=\{x \in \mathbb{Z} : \exists v \in \cE \text{ such that } f(v)=x\},
\]
noting that $|\mathrm{im}(f)|\le \mathrm{diam}(\Gamma)/2+1$. For each $i\in\mathrm{im}(f)$, the \textit{preimage} of $i$ under $f$ is $f^{-1}(i):=\{v\in \cE:f(v)=i\}$. Note that the level sets $\{f^{-1}(i)\}_{i \in \mathbb{Z}}$ naturally partition the vertex set $\cE$.

Recall the set $B(A)$ from \Cref{def:agbh}:
\[ B(A) := \{v \in \cO : N(v) \subseteq A\}. \]
A key observation, inspired by the approaches in \cite{galvin2003homomorphisms, peled2013lipschitz}, is that there is a one-to-many correspondence between centered legal labelings and $\Hom_{v_0}(\Gamma)$. 

\begin{lem}\label{obs:centered}
For every centered $\mathbb{Z}$-homomorphism $h:\cE\cup \cO \to \mathbb{Z}$, the rescaled restriction $f := \frac{1}{2}h|_{\cE}$ is a centered legal labeling of $\cE$. Conversely, for any centered legal labeling $f:\cE\to\mathbb{Z}$, the number of centered $\mathbb{Z}$-homomorphisms $h:\cE\cup \cO\to \mathbb{Z}$ that satisfy $h|_{\cE}=2f$ is exactly
\beq{def:P(f)}
P(f):=\prod_{i\in\mathrm{im}(f)}2^{|B(f^{-1}(i))|}.
\enq
We call $P(f)$ the \emph{weight} of $f$.
\end{lem}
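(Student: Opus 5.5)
The argument has two directions, handled separately. First I check that $f=\frac12 h|_{\cE}$ is a centered legal labeling. Then, given a centered legal labeling $f$, I count the extensions $h$ to $\cO$ one odd vertex at a time, using that the constraints on different odd vertices are independent once $h|_{\cE}$ is fixed.

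\emph{Forward direction.} Since $v_0\in\cE$ and $h(v_0)=0$, every even vertex is at even distance from $v_0$. A $\mathbb Z$-homomorphism changes parity along each edge, so $h$ takes even values on $\cE$ and $f$ is integer-valued. Take $u,v\in\cE$ with $\dist(u,v)=2$ and a common neighbor $w$. Then $|h(u)-h(v)|\le |h(u)-h(w)|+|h(w)-h(v)|=2$, so $|f(u)-f(v)|\le 1$. Finally $f(v_0)=0$, so $f$ is a centered legal labeling.

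\emph{Converse.} Fix a centered legal labeling $f$ and set $h|_{\cE}=2f$. Each $y\in\cO$ is adjacent only to even vertices, so $h(y)$ can be chosen independently for each $y$. The admissible values are
\[
\{z\in\mathbb Z: |z-2f(x)|=1 \text{ for all } x\in N(y)\}.
\]
Any two neighbors of $y$ are at distance $2$ (they are distinct even vertices with a common neighbor), so the values $f(x)$ for $x\in N(y)$ pairwise differ by at most $1$. Hence $\{f(x):x\in N(y)\}$ is either a single value $\{i\}$ or a pair $\{i,i+1\}$; it is nonempty because $\Gamma$ is connected, so $d(y)\ge 1$.
\begin{itemize}
\item If it is $\{i\}$, the admissible values are $2i\pm1$, giving exactly two choices. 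This case is precisely $N(y)\subseteq f^{-1}(i)$, i.e.\ $y\in B(f^{-1}(i))$.
\item If it is $\{i,i+1\}$, the only admissible value is $2i+1$, giving exactly one choice.
\end{itemize}

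\emph{Counting.} The sets $B(f^{-1}(i))$ for distinct $i$ are pairwise disjoint, since each $y$ with $N(y)\neq\emptyset$ lies in at most one of them. The number of extensions is therefore $\prod_{y\in\cO}(\text{number of choices for } y)=\prod_{i\in\mathrm{im}(f)}2^{|B(f^{-1}(i))|}=P(f)$. Every extension has $h(v_0)=0$ and satisfies $|h(u)-h(v)|=1$ on all edges, so each is centered.

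This argument has no real obstacle. The only point needing care is the nonemptiness of $N(y)$, which connectivity of $\Gamma$ supplies.
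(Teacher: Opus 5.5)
Your proof is correct and follows essentially the same argument as the paper. You check legality through a common neighbor, then count the extensions one odd vertex at a time: two choices when $f$ is constant on $N(y)$ and exactly one otherwise. Beyond the paper, you also make explicit three points it leaves implicit: the parity argument showing $f$ is integer-valued, and the nonemptiness of $N(y)$ and disjointness of the sets $B(f^{-1}(i))$, both needed for the product formula.
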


\begin{proof}
    If $h(v_0)=0$, then $f(v_0)=0$, so $f$ is centered. For any $u, w \in \cE$ with $\dist(u, w)=2$, there exists a common neighbor $v \in \cO$. Since $|h(u)-h(v)|=1$ and $|h(w)-h(v)|=1$, we must have $|h(u)-h(w)| \in \{0, 2\}$. Dividing by 2 shows $|f(u)-f(w)| \le 1$, so $f$ is a centered legal labeling.

    For the converse, fix a centered legal labeling $f$. This determines $h|_\cE$, and consider the possible values for $h(v)$ for each $v \in \cO$. For $h$ to be a homomorphism with $h|_{\cE}=2f$, $h(v)$ must be adjacent to every value in $\{2f(u) : u \in N(v)\}$. 
    \begin{itemize}
        \item If $f$ is constant $i$ on $N(v)$ (i.e., $N(v) \subseteq f^{-1}(i)$; equivalently, $v \in B(f^{-1}(i))$), then $h(N(v)) = \{2i\}$. In this case, $h(v)$ can be either $2i-1$ or $2i+1$, giving $2$ choices.
        \item If $f$ is not constant on $N(v)$, then since $|f(u)-f(w)| \le 1$ for all $u, w \in N(v)$, the set $f(N(v))$ must be $\{i, i+1\}$ for some $i$. Then $h(N(v)) = \{2i, 2i+2\}$. The only integer adjacent to both values is $2i+1$, giving exactly $1$ choice.
    \end{itemize}
    Since the choices for each $v \in \cO$ are independent, the total number of homomorphisms is $2^{\sum_{i} |B(f^{-1}(i))|}$, which equals $P(f)$.
\end{proof}

Let $\mathcal{F}$ denote the set of all centered legal labelings of $\cE$. It follows from \Cref{obs:centered} that $|\Hom_{v_0}(\Gamma)|$ can be expressed as the total weight of these labelings:
\beq{eq:task} 
|\Hom_{v_0}(\Gamma)| = \sum_{f \in \mathcal{F}} P(f).
\enq

\section{Merge}\label{sec:merge}

The goal of this section is to introduce the \textit{merge} operation as a crucial tool to bound the sum in \eqref{eq:task}.

\subsection{The merge operation}

The purpose of the merge operation is to smooth out some of the ``peaks'' in a given legal labeling, which we call \textit{maximum components}.

\begin{mydef}[Maximum component]\label{def:max-component}
Let $\Gamma=(V, E)$ be a bipartite graph with vertex bipartition $V=\cE\cup\cO$.
Let $f:\cE\to\mathbb{Z}$ be a legal labeling of $\cE$. We say that $K\subseteq \cE$ is a \textit{maximum component} (of $f$) if:
\begin{enumerate}[(i)]
    \item $K$ is 2-linked; and
    \item for every $x\in K$ and $y\in N^2(K)\setminus K$, we have $f(y)<f(x)$.
\end{enumerate}
\end{mydef}

Observe that if $K$ is a maximum component of $f$, then for every $y \in N^2(K) \setminus K$, $f(y)=\min\{f(x):x \in K\}-1$.

\begin{mydef}[Merge] 
Let $f:\cE\to\mathbb{Z}$ be a centered legal labeling. Suppose there is a maximum component $K\subseteq \cE$ of $f$. Define the \textit{merge} (with respect to $f$ and $K$), denoted by $m_K(f):\cE\to\mathbb{Z}$, as follows:
\begin{itemize}
    \item If $v_0\notin K$, then let
    \[
    m_K(f)(x)=\begin{cases}f(x)-1 & \text{if } x \in K;\\ f(x) & \text{if } x \notin K.\end{cases}
    \]
    \item If $v_0\in K$, then let
    \[
    m_K(f)(x)=\begin{cases}f(x) & \text{if } x \in K;\\ f(x)+1 & \text{if } x \notin K.\end{cases}
    \]
\end{itemize}
\end{mydef}

\begin{lem}\label{obs:merge-preserve-labeling}
If $f$ is a centered legal labeling and $K$ is a maximum component of $f$, then $m_K(f)$ is also a centered legal labeling. Furthermore, we have $P(m_K(f))=2^{\,|N(K)|-|B(K)|}\,P(f)$.
\end{lem}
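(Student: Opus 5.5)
The plan has two parts: show that $m_K(f)$ is again a centered legal labeling, then compare the weights $P$ level set by level set. Write $M=\min\{f(x):x\in K\}$. By the observation after \Cref{def:max-component}, every $y\in N^2(K)\setminus K$ has $f(y)=M-1$.

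\textbf{Legality and centering.} The second case of the merge is just the first case (subtract $1$ on $K$) followed by adding $1$ to every value. Legality and the weight $P$ are invariant under global shifts, so it suffices to treat $g:=f-\mathbf 1_K$; centering is then checked directly in each case. If $v_0\notin K$, then $g(v_0)=f(v_0)=0$. If $v_0\in K$, then $m_K(f)(v_0)=f(v_0)=0$.

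For legality, take $u,w$ at distance $2$.
\begin{itemize}
\item If both lie in $K$ or both lie outside $K$, their difference is unchanged.
\item If $u\in K$ and $w\notin K$, then $w\in N^2(K)\setminus K$, so $f(w)=M-1$.
\end{itemize}
In the mixed case, legality of $f$ gives $f(u)\le f(w)+1=M$, and $f(u)\ge M$ by definition of $M$. Hence $f(u)=M$, and so $f$ is constant equal to $M$ on $K$. Then $g(u)=M-1=g(w)$, and the pair is legal.

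\textbf{Comparing weights.} The same argument shows that $K$ is a full $2$-linked piece of the level set $f^{-1}(M)$, and that every vertex outside $K$ within distance $2$ of $K$ has value $M-1$. Merging therefore moves $K$ from level $M$ into level $M-1$. So $g^{-1}(M-1)=f^{-1}(M-1)\cup K$, $g^{-1}(M)=f^{-1}(M)\setminus K$, and all other level sets are unchanged.

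Now compare $\sum_i|B(\cdot)|$. Let $L=f^{-1}(M)\setminus K$. Since $\dist(L,K)>2$, the sets $N(L)$ and $N(K)$ are disjoint, so $B(f^{-1}(M))=B(K)\sqcup B(L)$. Thus the level-$M$ term drops by exactly $|B(K)|$.

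For level $M-1$, put $J=f^{-1}(M-1)$; the claim is that $B(J\cup K)=B(J)\sqcup N(K)$. First, every $y\in N(K)$ has $N(y)\subseteq K\cup(N^2(K)\setminus K)\subseteq K\cup J$, so $N(K)\subseteq B(J\cup K)$. Second, $N(K)\cap B(J)=\emptyset$, because $B(J)$ only contains vertices with $N(y)\subseteq J$, while each $y\in N(K)$ has a neighbour in $K$. Third, if $y\notin N(K)$ and $N(y)\subseteq J\cup K$, then $N(y)\subseteq J$. Together these give the decomposition, so the level-$M-1$ term rises by exactly $|N(K)|$.

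Hence $P(m_K(f))=2^{|N(K)|-|B(K)|}P(f)$. The step needing the most care is the level-set bookkeeping, specifically the disjointness $N(L)\cap N(K)=\emptyset$, which is where maximality condition (ii) of \Cref{def:max-component} is used.
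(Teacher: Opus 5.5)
\textbf{There is a genuine gap in the weight computation.} Your centering and legality arguments are fine. The problem is the step after the mixed case.

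You correctly obtain $f(u)=M$ for each $u\in K$ that has a vertex of $\cE\setminus K$ at distance $2$. You then conclude that $f$ is constant equal to $M$ on all of $K$, and that does not follow. \Cref{def:max-component} only constrains $f$ on $N^2(K)\setminus K$, so vertices of $K$ away from its boundary can carry larger values.

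For example, take the path $e_1o_1e_2o_2e_3o_3e_4o_4e_5$ with $\cE=\{e_1,\dots,e_5\}$, $v_0=e_1$ and $f=(0,1,2,1,0)$. The set $K=\{e_2,e_3,e_4\}$ is a maximum component with $M=1$, but $f(e_3)=2$. In this example:
\begin{itemize}
\item $m_K(f)$ does not move $K$ into level $M-1$.
\item Your identities $g^{-1}(M-1)=f^{-1}(M-1)\cup K$ and $g^{-1}(M)=f^{-1}(M)\setminus K$ fail, and level $M+1$ also changes.
\item The claim that the level-$M$ term drops by $|B(K)|$ is false: $B(f^{-1}(1))=B(\{e_2,e_4\})=\emptyset$, while $B(K)=\{o_2,o_3\}$.
\end{itemize}
The lemma itself still holds here, since $P(f)=1$ and $P(m_K(f))=4=2^{4-2}$.

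This case matters for the paper. In the proof of \Cref{thm:Z-hom-spectral-expanders}, the $K_i$ are $2$-linked components of superlevel sets $\{f\ge m-i\}$, and the merged labelings $f_i$ need not be constant on them. As written, you prove the lemma only when $f|_K$ is constant.

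\textbf{How to fix it.} Stop tracking whole level sets. The paper writes $\sum_i|B(f^{-1}(i))|$ as the number of $y\in\cO$ with $f|_{N(y)}$ constant, and splits $\cO$ into three classes:
\begin{itemize}
\item $\cO\setminus N(K)$: the labels on $N(y)$ are unchanged.
\item $B(K)$: all labels on $N(y)$ drop by $1$, so constancy is preserved whatever the values on $K$ are.
\item $N(K)\setminus B(K)$: neighbours outside $K$ have value $M-1$, and neighbours inside $K$ are at distance $2$ from them and so have value $M$. After merging, all of them equal $M-1$, so $f|_{N(y)}$ goes from non-constant to constant.
\end{itemize}
This gives a gain of exactly $|N(K)|-|B(K)|$.

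Alternatively, you can repair your own bookkeeping by splitting each $f^{-1}(j)$ into its parts inside and outside $K$. No $y\in\cO$ has neighbours in both $K$ and $\cE\setminus K$ with equal $f$-values, so each $B(f^{-1}(j))$ splits accordingly. The $K$-parts then just shift down one level. The only newly created vertices with constant neighbourhood are those of $N(K)\setminus B(K)$, all at level $M-1$.
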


\begin{proof} 
It is clear that $m_K(f)(v_0)=f(v_0)=0$. To verify that $m_K(f)$ is a legal labeling, we must show that for every pair of vertices $u,w\in \cE$ with $\dist(u,w)=2$, $|m_K(f)(u)-m_K(f)(w)|\leq 1$. 

For simplicity, write $\widetilde{f}$ for $m_K(f)$. We consider three different cases up to symmetry. If $u,w\in K$ or $u,w\notin K$, it is clear from the definition that $|\widetilde{f}(u)-\widetilde{f}(w)|\leq 1$. If $u\in K$ and $w\notin K$, then since $w\in N^2(K)\setminus K$, we must have $f(w)=f(u)-1$ (by the definition of a legal labeling and the observation following \Cref{def:max-component}). Therefore, if $v_0\notin K$, we have $\widetilde{f}(w)=f(w)=f(u)-1=\widetilde{f}(u)$; if $v_0\in K$, we have $\widetilde{f}(w)=f(w)+1=f(u)=\widetilde{f}(u)$.

To prove that $P(\widetilde{f})=2^{\,|N(K)|-|B(K)|}\,P(f)$, we first express the weight $P(f)$ in a more combinatorial form. By \Cref{obs:centered},
\[
P(f) =\prod_{i\in\mathrm{im}(f)}2^{|B(f^{-1}(i))|}= 2^{\sum_{i \in \mathrm{im}(f)} |B(f^{-1}(i))|},
\]
where
\[
\sum_{i \in \mathrm{im}(f)} |B(f^{-1}(i))|= \sum_{i \in \mathrm{im}(f)}|\{v\in\cO: \text{$f|_{N(v)}$ is constant } i\}|= |\{v\in\cO: \text{$f|_{N(v)}$ is constant}\}|.
\]
We now count how this quantity changes between $f$ and $\widetilde{f}$. 

Assume first that $v_0 \notin K$, so $\widetilde{f} = f - 1$ on $K$ and $\widetilde{f} = f$ outside $K$. We partition $\cO$ into three sets:
\begin{itemize}
    \item $\cO\setminus N(K)$: For all $v\in \cO\setminus N(K)$, we have $N(v) \subseteq \cE\setminus K$, so $\widetilde{f}|_{N(v)} = f|_{N(v)}$. Thus, $\widetilde{f}$ is constant on $N(v)$ if and only if $f$ is.
    \item $B(K)$: For all $v\in B(K)$, we have $N(v) \subseteq K$, so $\widetilde{f}|_{N(v)} = f|_{N(v)} - 1$. Thus, $\widetilde{f}$ is constant on $N(v)$ if and only if $f$ is.
    \item $N(K) \setminus B(K)$: For all $v\in N(K) \setminus B(K)$, $N(v)$ intersects both $K$ and $\cE \setminus K$. Letting $k:=\min f|_K$, we find that $f|_{N(v)\setminus K}$ is constant $k-1$, and $f|_{N(v)\cap K}$ is constant $k$. Thus, $\widetilde{f}|_{N(v)\setminus K}$ and $\widetilde{f}|_{N(v)\cap K}$ are both equal to $k-1$. This implies that $\widetilde{f}$ is constant on $N(v)$, whereas $f$ is not.
\end{itemize}
Altogether, it follows that $|\{v\in\cO: \text{$\widetilde{f}|_{N(v)}$ is constant}\}| = |\{v\in\cO: \text{$f|_{N(v)}$ is constant}\}| + |N(K)\setminus B(K)|$, which yields $P(\widetilde{f})=2^{\,|N(K)|-|B(K)|}\,P(f)$. The other case ($v_0\in K$) follows similarly.
\end{proof}

\section{Proof of \Cref{thm:Z-hom-spectral-expanders}}\label{sec:proof-spectral}

Recall that $\mathcal{F}$ denote the family of centered legal labelings (with respect to $v_0$) of $\cE$. We further classify $\mathcal{F}$ using the notion of ``level'':

\begin{mydef}[Level with respect to a vertex]\label{def:level}
Let $\Gamma=(V, E)$, $V=\cE\cup \cO$, be a bipartite $(n,d,\lambda)$-graph. Let $\mathcal{F}$ denote the set of centered legal labelings of $\cE$ with respect to $v_0 \in \cE$. Fix $x \in \cE$. For $\ell\ge 1$, we say that $f\in \mathcal{F}$ is \emph{at level $\ell$ with respect to $x$}, written as $f\in \mathcal{F}_\ell(x)$, if there exist centered legal labelings $f_0, f_1, \dots, f_\ell$ and maximum components $K_i$ of $f_i$ for each $i=0, 1, \ldots, \ell-1$ such that
\[
f_0=f, \qquad
f_{i+1}=m_{K_i}(f_i)\ \ \text{for } 0\le i\le \ell-1,
\]
\[
x\in K_0\subseteq K_1\subseteq\cdots\subseteq K_{\ell-1},
\qquad \text{and} \qquad
|[K_{\ell-1}]|\le \frac{3n}{4}.
\]
In this case, we say that $f\in\mathcal{F}_\ell(x)$ is \textit{witnessed} by the sequence $K_0,\dots,K_{\ell-1}$.
\end{mydef}

\begin{lem}\label{lem:level-size-growth}
Let $\delta>0$, and let $\Gamma=(V, E)$ be a bipartite $(n,d,\lambda)$-graph with vertex bipartition $V=\cE\cup \cO$ and $\lambda\le (1-\delta)d$. For any $x \in \cE$, if $f\in \mathcal{F}_\ell(x)$ is witnessed by $K_0,\dots,K_{\ell-1}$, then 
\[
K_i\supseteq N^{2i}(x)
\]
for every $1\le i\leq \ell-1$. Consequently,  $|K_i|\ge \bigl(1+\frac{\delta}{4}\bigr)^{2i}$ for every $1\le i\leq \ell-1$.
\end{lem}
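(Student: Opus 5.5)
The plan is to prove the stronger inductive statement
\[
K_{i}\supseteq K_{i-1}\cup N^2(K_{i-1}) \qquad \text{for every } 1\le i\le \ell-1,
\]
and then iterate it starting from $x\in K_0$. Here $N^{2i}(x)$ is read as $N^2$ applied $i$ times to $\{x\}$. Since $\Gamma$ is $d$-regular with $d\ge 1$, we have $A\subseteq N^2(A)$ for every $A$. So if $N^{2(i-1)}(x)\subseteq K_{i-1}$, then $N^{2i}(x)\subseteq N^2(K_{i-1})\subseteq K_i$. The base case is $x\in K_0$, which gives $N^2(x)\subseteq N^2(K_0)\subseteq K_1$.

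\textbf{Key step.} Fix $1\le i\le \ell-1$ and $w\in N^2(K_{i-1})\setminus K_{i-1}$. Choose $u\in K_{i-1}$ with $\dist(u,w)=2$. Since $K_{i-1}$ is a maximum component of $f_{i-1}$, we have $f_{i-1}(w)<f_{i-1}(u)$. Legality gives $f_{i-1}(u)\le f_{i-1}(w)+1$, so $f_{i-1}(u)=f_{i-1}(w)+1$. Exactly as in the proof of \Cref{obs:merge-preserve-labeling}, the merge equalizes these values in both cases ($v_0\notin K_{i-1}$, where $K_{i-1}$ is lowered by one, and $v_0\in K_{i-1}$, where the complement is raised by one). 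Hence $f_i(w)=f_i(u)$.

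Now $u\in K_{i-1}\subseteq K_i$, and $w\in N^2(u)\subseteq N^2(K_i)$. If $w\notin K_i$, condition (ii) of \Cref{def:max-component} applied to the maximum component $K_i$ of $f_i$ would force $f_i(w)<f_i(u)$, which is a contradiction. Thus $w\in K_i$, and $K_i\supseteq K_{i-1}\cup N^2(K_{i-1})$. This finishes the containment.

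\textbf{Size bound.} Write $S_j:=N^{j}(x)$, so that $S_{j+1}=N(S_j)$. Since $[\,\cdot\,]$ contains its argument, for $2j\le 2i$ we have
\[
S_{2j}\subseteq K_i\subseteq K_{\ell-1}\subseteq [K_{\ell-1}],
\]
and hence $|S_{2j}|\le 3n/4$.

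We also need the odd layers $S_{2j+1}$ with $2j+1<2i$ to be small enough to apply \Cref{prop:expand-3/4}. In a $d$-regular bipartite graph every set satisfies $|N(S)|\ge |S|$, so
\[
|S_{2j+1}|\le |N(S_{2j+1})|=|S_{2j+2}|\le 3n/4 .
\]
This holds because $2j+2\le 2i$.

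Applying \Cref{prop:expand-3/4} to each of $S_0,S_1,\dots,S_{2i-1}$ then gives $|S_{k+1}|\ge (1+\delta/4)|S_k|$ for each $k<2i$. Chaining these from $|S_0|=1$ yields
\[
|K_i|\ge |S_{2i}|\ge (1+\delta/4)^{2i}.
\]

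\textbf{Expected obstacle.} The only delicate point is controlling the odd layers, which lie in $\cO$ and so are not directly constrained by the condition $|[K_{\ell-1}]|\le 3n/4$. The monotonicity $|N(S)|\ge|S|$ handles this, as shown above. The containment step itself is a direct check once one notes that the merge equalizes values across the boundary of $K_{i-1}$.
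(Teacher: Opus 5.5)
Your proof is correct and follows the paper's argument. The merge equalizes $f$ across the boundary of $K_{i-1}$, which forces $N^2(K_{i-1})\subseteq K_i$, and iterating \Cref{prop:expand-3/4} along the layers $N^{j}(x)$ then gives the size bound. Your explicit check that the odd layers $N^{2j+1}(x)\subseteq\cO$ also have size at most $3n/4$ (via $|N(S)|\ge|S|$) supplies a detail that the paper's one-line size bound leaves implicit.
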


\begin{proof}
We show that $K_{i+1}\supseteq N^2(K_i)$ for every $0\leq i\leq \ell-1$. Since $K_i$ is a maximum component of $f_i$, the values of $f_i$ on $N^2(K_i)\setminus K_i$ are identically equal to $\min f_i|_{K_i}-1$. Thus for $f_{i+1}=m_{K_i}(f_i)$ we have $f_{i+1}|_{N^2(K_i)\setminus K_i}=\min f_{i+1}|_{K_i}$. This implies that any maximum component of $f_{i+1}$ containing $K_i$ will necessarily contain $N^2(K_i)$ as well. Since $x\in K_0$, it immediately follows by induction that $K_i\supseteq N^{2i}(x)$ for every $1\leq i\leq \ell-1$.

Finally, for all $0 \le i \le \ell-1$, $|[K_{i}]| \le |[K_{\ell-1}]|\leq \frac{3n}{4}$, so \Cref{prop:expand-3/4} guarantees that $|N^{2i}(x)| \ge (1+\frac{\delta}{4})^{2i}$, which yields $|K_i|\ge \Bigl(1+\frac{\delta}{4}\Bigr)^{2i}$ for all $1 \le i \le \ell-1$.
\end{proof}

\begin{lem}\label{lem:almost-all-good-spectral}
For every $\ell\geq 1$ and $x\in \cE$, we have
\[
\frac{\sum_{f\in\mathcal{F}_{\ell}(x)}P(f)}{\sum_{f\in\mathcal{F}}P(f)}\leq 2^{-\Omega((1+\delta/4)^{2\ell}/\log d)}.
\]
\end{lem}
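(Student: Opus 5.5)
Throughout, \(\sum P(\cdot)\) over a set of labelings is called its \emph{weight}; \(\Omega(\cdot)\) constants may depend on \(\delta\). The plan is a weighted double-counting argument through the map \(f\mapsto f_\ell\), i.e.\ the composition of the \(\ell\) merges. For \(f\in\mathcal F_\ell(x)\) witnessed by \(K_0\subseteq\dots\subseteq K_{\ell-1}\), repeated application of Lemma~\ref{obs:merge-preserve-labeling} gives
\[
P(f_\ell)=2^{\sum_{i=0}^{\ell-1}(|N(K_i)|-|B(K_i)|)}P(f).
\]
It therefore suffices to bound, for each \(f_\ell\in\mathcal F\), the total weight of its preimages in \(\mathcal F_\ell(x)\) by \(2^{-\Omega((1+\delta/4)^{2\ell}/\log d)}P(f_\ell)\). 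Summing over \(f_\ell\) then bounds the weight of \(\mathcal F_\ell(x)\) by that factor times the total weight of \(\mathcal F\).

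\textbf{Encoding the preimages.} I would charge each \(f\) to the pair \((f_\ell,(K_0,\dots,K_{\ell-1}))\). The map \(f\mapsto f_\ell\) can be inverted given the \(K_i\). Indeed, each merge either subtracts \(1\) on \(K_i\) or adds \(1\) off \(K_i\). Which case applies is determined by whether \(v_0\in K_i\), and that is read off from \(K_i\) itself. Hence for fixed \(f_\ell\) the weight of its preimages is at most
\[
P(f_\ell)\sum_{(K_i)}2^{-\sum_i(|N(K_i)|-|B(K_i)|)},
\]
where the sum is over admissible chains with \(x\in K_0\), each \(K_i\) 2-linked, and \(|[K_i]|\le 3n/4\). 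Because the exponent splits across \(i\), the chain sum is at most
\[
\prod_{i=0}^{\ell-1}\Bigl(\sum_{K\ni x\text{ 2-linked},\,|[K]|\le 3n/4}2^{-(|N(K)|-|B(K)|)}\Bigr),
\]
which drops the nesting constraint. I would keep the lower bound \(|K_i|\ge(1+\delta/4)^{2i}\) from Lemma~\ref{lem:level-size-growth} as a restriction on the \(i\)-th factor.

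\textbf{Bounding each factor with containers.} For a fixed \(i\), I group the sets \(K\) by the parameters \((a',g,b,h)\) and by a vertex \(v\in N(x)\subseteq G\); there are at most \(d\) choices of \(v\). Corollary~\ref{cor:H-spectral-expander}, which applies because \(a'\le 3n/4\), gives \(|\mathcal H(a',g,b,h,v)|\le 2^{g-b-\Omega(t_1/\log d)}\). Each such \(K\) contributes \(2^{-(g-b)}\), so the group contributes at most \(2^{-\Omega(t_1/\log d)}\). As in the proof of that corollary, \(t_1\ge(\delta/8)g\ge(\delta/8)|K|\). The number of parameter tuples is \(\mathrm{poly}(g)\), and the choice of \(v\) costs a factor \(d\). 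Both are absorbed by \(2^{-\Omega(g/\log d)}\) once \(g\gg\log d\log g\).

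This leaves small \(g\), which is the main obstacle. I would handle it by dropping the trivial terms. A step with \(|N(K)|=|B(K)|\) is impossible, since \(|N(K)|\ge(1+\delta/4)|K|>|K|\ge|B(K)|\). For small \(K\) (say \(|K|\le d^{0.1}\)) I would argue directly, via Lemma~\ref{lem:linked-set-count}, that there are at most \((ed^2)^{|K|}\) such \(K\). Each has \(|N(K)|-|B(K)|\ge d|K|/2\): every vertex of \(K\) has \(d\) neighbours, and few of them can have all \(d\) neighbours inside the small set \(K\). So these contribute \(2^{-\Omega(d)}\).

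\textbf{Combining.} The \(i\)-th factor is then at most
\[
\sum_{g\ge(1+\delta/4)^{2i}}2^{-\Omega(g/\log d)}=2^{-\Omega((1+\delta/4)^{2i}/\log d)}.
\]
The product over \(i\) is dominated by the \(i=\ell-1\) term, giving \(2^{-\Omega((1+\delta/4)^{2\ell}/\log d)}\) after adjusting the constant. For small \(\ell\) the target bound is \(\Omega(1)\) in the exponent; there I would rely only on each factor being at most \(2^{-\Omega(d/\log d)}\le 1/2\).
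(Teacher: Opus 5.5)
Your proposal is correct and follows essentially the paper's route: invert the merges given $f_\ell$ and the witnesses, drop the nesting to factor the sum, and bound each factor by Corollary~\ref{cor:H-spectral-expander} using $t_1\ge(\delta/8)g$. The separate small-$K$ step is unnecessary, since every nonempty $K$ has $g=|N(K)|\ge d$, so $2^{-\Omega(g/\log d)}$ already absorbs the $\mathrm{poly}(g)$ and $d$ factors; moreover its stated justification $|N(K)|-|B(K)|\ge d|K|/2$ can fail in a general spectral expander (twin vertices may share all $d$ neighbours), although $|N(K)|\ge d$ together with $B(K)=\emptyset$ for $|K|<d$ still gives the $2^{-\Omega(d)}$ you need.
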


\begin{proof}
Suppose $f\in\mathcal{F}_\ell(x)$ is witnessed by $K_0,\dots,K_{\ell-1}$. 

Recall from \Cref{obs:merge-preserve-labeling} that $f_0,\dots,f_\ell \in\mathcal{F}$, and the weights satisfy $P(f_i) = 2^{-|N(K_i)|+|B(K_i)|} P(f_{i+1})$.
Iterating this relation gives
\[
P(f) = P(f_\ell) \prod_{i=0}^{\ell-1} 2^{-|N(K_i)|+|B(K_i)|}.
\]

Notice that each $f \in \mathcal{F}_\ell(x)$ is determined by a final labeling $f_\ell \in \mathcal{F}$ and witnesses $K_0, \dots, K_{\ell-1}$. Therefore, we can upper bound the sum over $f \in \mathcal{F}_\ell(x)$ by summing over all possible choices of $f_\ell \in \mathcal{F}$ and nesting sequences $K_i$ (relaxing the strict condition that $K_i$ must be a maximum component). This yields
\begin{align*}
    \sum_{f\in\mathcal{F}_{\ell}(x)}P(f)
    &\leq \sum_{f_\ell\in \mathcal{F}}P(f_\ell)\sum_{\substack{x \in K_0 \sub \cdots  \sub K_{\ell-1} \\ |K_i| \ge (1+\gd/4)^{2i}\\ |[K_{i}]|\le 3n/4}}\prod_{i=0}^{\ell-1} 2^{-|N(K_i)|+|B(K_i)|} \\
    &\leq \sum_{f_\ell\in \mathcal{F}}P(f_\ell) \prod_{i=0}^{\ell-1}\left(\sum_{m\geq (1+\delta/4)^{2i}}\sum_{\substack{K_i\subseteq\cE,\,|K_i|=m\\|[K_i]|\leq 3n/4 \\ x\in K_i}}2^{-|N(K_i)|+|B(K_i)|}\right).
\end{align*}
To bound the inner sum for each $i \geq 0$, we apply \Cref{cor:H-spectral-expander}. By picking an arbitrary $v \in N(x)$, we have
\begin{align*}
    \sum_{m\geq (1+\delta/4)^{2i}}\sum_{\substack{K_i\subseteq\cE,\,|K_i|=m\\|[K_i]|\leq 3n/4 \\ x\in K_i}}2^{-|N(K_i)|+|B(K_i)|}
    &\qquad \leq \sum_{(1+\delta/4)^{2(i+1)}\leq g}\sum_{\substack{a',b,h\leq g \\ a' \le 3n/4}}\mathcal{H}(a',g,b,h,v)2^{-g+b}\\
    &\qquad \leq \sum_{(1+\delta/4)^{2(i+1)}\leq g} g^3\cdot 2^{-\Omega(t_1/\log d)}. \end{align*}
Noting that $t_1=g-a'=\Omega(g)$ (since $g \ge (1+\gd/4)a'$ so $g-a' \ge g(1-1/(1+\gd/4))=\Omega(g)$ for any fixed $\delta$), the last expression is at most
\[2^{-\Omega((1+\delta/4)^{2(i+1)}/\log d)}.\]
Substituting this bound back into our product, we obtain
\begin{align*}
    \sum_{f\in\mathcal{F}_{\ell}(x)}P(f) 
    &\leq \sum_{f_\ell\in \mathcal{F}}P(f_\ell)\prod_{i=0}^{\ell-1}2^{-\Omega((1+\delta/4)^{2(i+1)}/\log d)}\\
    &\leq \sum_{f_\ell\in \mathcal{F}}P(f_\ell) 2^{-\Omega((1+\delta/4)^{2\ell}/\log d)}.
\end{align*}
Dividing both sides by $\sum_{f\in\mathcal{F}}P(f)$ completes the proof.
\end{proof}

We are now ready to prove the main theorem.

\begin{proof}[Proof of \Cref{thm:Z-hom-spectral-expanders}] 
    Recall the definition of the range $R(h)=|\{h(v):v \in V\}|$. Since the values of $h \in \Hom_{v_0}(\Gamma)$ on adjacent vertices differ by exactly 1, the image $h(V)$ is a set of consecutive integers. Thus, $\max_{v \in V} h(v) - \min_{v \in V} h(v) = R(h) - 1$. Since every vertex in $V$ is adjacent to (or equal to) a vertex in $\cE$, we have $\max_{v \in \cE} h(v) \ge \max_{v \in V} h(v) - 1$ and $\min_{v \in \cE} h(v) \le \min_{v \in V} h(v) + 1$. Consequently, 
\[
\max_{v \in \cE} h(v) - \min_{v \in \cE} h(v) \ge R(h) - 3.
\]

Recall that the legal labeling $f$ is obtained by rescaling $h$ on $\cE$ by a factor of $1/2$. Let $m:=\max_{v \in \cE}f(v)$ and $m':=\min_{v \in \cE}f(v)$. Since $h(v) = 2f(v)$ for all $v \in \cE$, the above inequality yields $2(m - m') \ge R(h) - 3$.

Suppose $R(h) > 2t + 3$, which implies $m-m'> t$. By replacing $t$ with $3\lfloor t/3 \rfloor$ if necessary, we may assume that $t$ is a multiple of 3. By the pigeonhole principle, we either have $|f^{-1}(\{m,m-1,\dots,m-t/3\})|\leq n/2$ or $|f^{-1}(\{m',m'+1,\dots,m'+t/3\})|\leq n/2$ (since $(m - t/3) - (m' + t/3) = m - m' - 2t/3 > t/3 \ge 1$, the top and bottom intervals are disjoint). Without loss of generality, suppose the former holds.

Take any $x\in \cE$ with $f(x)=m$. For every $0\leq i\leq t/3$, let $K_i$ denote the 2-linked component of the set $\{v \in \cE : f(v) \ge m-i\}$ containing $x$. (Note that by maximality, $f < m-i$ on $N^2(K_i) \setminus K_i$.) By construction, $K_i$ is a maximum component of the appropriately merged labeling $f_i$, and we have the nested sequence $x \in K_0 \subseteq K_1 \subseteq \dots \subseteq K_{t/3-1}$. Furthermore, for all $1\leq i\leq t/3-1$, the assumption implies $|[K_i]|\leq |K_{i+1}|\leq n/2\leq 3n/4$. Thus, $f\in\mathcal{F}_{t/3}(x)$ is witnessed by $K_0,K_1,\dots,K_{t/3-1}$.

Recall from \Cref{obs:centered} that picking $h$ uniformly at random corresponds to picking $f \in \mathcal{F}$ with probability exactly proportional to its weight $P(f)$. Applying \Cref{lem:almost-all-good-spectral} (with $\ell = t/3$), we can bound the probability:
\begin{align*}
\PP_{\bf h}(R({\bf h})>2t+3) &\leq \PP_{\bf f}(m - m' \ge t) \\
&\leq \sum_{x\in \cE}\PP_{\bf f}({\bf f}\in\mathcal{F}_{t/3}(x)) \\
&\leq n\cdot 2^{-\Omega((1+\delta/4)^{2(t/3)}/\log d)} \\
&\leq n\cdot 2^{-\Omega((1+\delta/4)^{2t/3}/\log n)}.
\end{align*}
Since $d \le n$, we have $\log d \le \log n$. Setting $t=C_0 \frac{\log\log n}{\delta}$ for a sufficiently large constant $C_0=C_0(\lambda/d)>0$, the exponent dominates $\log n$, and the above probability becomes $o_n(1)$. Rescaling the constant to $C = 2C_0$, this concludes the proof that
\[
\PP(R({\bf h})\leq C\log\log n)=1-o_n(1). \qedhere
\]
\end{proof}

\section{Proof of \Cref{thm:Z-hom-middle-layers}} \label{sec:proof-middle layers}

Let $\Gamma=Q_{2d-1}^{\md}$ be the middle layers of the $(2d-1)$-dimensional Hamming cube, with vertex bipartition $V=\cE\cup \cO$ where $\cE=L_{d-1}$, $\cO=L_d$. 
Fix any $v_0\in L_{d-1}$. As in the previous section, let $\cF$ denote the family of centered legal labelings of $\cE=L_{d-1}$. 

\subsection{Setup}
We will classify legal labelings based on the existence of maximum components with ``intermediate size,'' and define a legal labeling to be \textit{max-good} (and similarly \textit{min-good}) if it lacks such components. We will show, using results from \Cref{sec:containers}, that legal labelings that are not max/min-good are amenable to a merge operation that strictly increases $P(f)$.

\begin{mydef}[Max-good]
Let $f$ be a legal labeling of $L_{d-1}$. Let $c=1/10$. We say that $f$ is \textit{max-good} if there is no maximum component $K$ such that
\beq{eq:K_size} \text{$|K|\geq d/2$ and $|[K]|\leq (1-c)\binom{2d-1}{d}$.}\enq
We denote by $\mathcal F_0 ~(\sub \cF)$ the collection of centered max-good legal labelings.
\end{mydef}

The following lemma is central to our argument, as it establishes that almost all $\mathbb Z$-homomorphisms correspond to max-good legal labelings.

\begin{lem}\label{lem:almost-all-good}
$\sum_{f\in\cF}P(f)=(1+o(1))\sum_{f\in\cF_0}P(f)$.
\end{lem}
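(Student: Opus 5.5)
We bound the weight of non-max-good labelings, $\sum_{f\in\cF\setminus\cF_0}P(f)$, by $o(1)\cdot\sum_{f\in\cF}P(f)$, using a single merge rather than the iterated merges of \Cref{sec:proof-spectral}. If $f\notin\cF_0$, fix a witnessing maximum component $K$ satisfying \eqref{eq:K_size}, chosen canonically, say the first in a fixed order on $2^{\cE}$. By \Cref{obs:merge-preserve-labeling}, $m_K(f)\in\cF$ and
\[
P(f)=2^{-|N(K)|+|B(K)|}P(m_K(f)).
\]
Moreover, $f$ is recovered from the pair $(m_K(f),K)$: we subtract the merge, and the case $v_0\in K$ or not is read off from $K$. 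Hence
\[
\sum_{f\in\cF\setminus\cF_0}P(f)\le \sum_{f'\in\cF}P(f')\sum_{K}2^{-|N(K)|+|B(K)|},
\]
where the inner sum runs over $2$-linked $K\subseteq L_{d-1}$ with $|K|\ge d/2$ and $|[K]|\le(1-c)\binom{2d-1}{d}$. It then suffices to show that this inner sum is $o(1)$.

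We group the sets $K$ by the parameters $(a',g,b,h)$ and by the $2$-linked structure, and split into two size ranges.

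\emph{Large sets, $g\ge d^2/100$.} Here we apply \Cref{cor:H-middle-layers}, which bounds the number of such $K$ by $2^{g-b-\Omega(t_1/\log d)}$, so each parameter class contributes $2^{-\Omega(t_1/\log d)}$. There are at most $g^3$ classes for each $g$. By \Cref{lem:isoperimetry}(a), once $a'\le(1-c)\binom{2d-1}{d}$ we have $t_1=g-a'\gtrsim g/d$; indeed, since $\binom{2d-1-c'}{d}\ge (1-c)\binom{2d-1}{d}$ for some $c'=\Theta(1)$, this gives $t_1\ge \Omega(a'/d)$. The class then contributes $g^3 2^{-\Omega(g/(d\log d))}$, and summing over $g\ge d^2/100$ gives $o(1)$, since $g/(d\log d)\ge d/(100\log d)\gg\log g$ as long as $g\le 2^{O(d)}$, which holds because $g\le\binom{2d-1}{d}$. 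One subtlety: \Cref{cor:H-middle-layers} counts all sets rather than those containing a fixed vertex, so no extra factor of $|\cE|$ arises here. This is favorable, because a factor of $|\cE|$ could not be afforded.

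\emph{Small sets, $|K|\ge d/2$ and $g<d^2/100$.} Containers are not available in this range. Instead we use \Cref{lem:isoperimetry}(b): since $|K|\le g\le d^6$, we get $|N(K)|\ge d|K|/8$. Also $b\le |K|/d\cdot$ (each $v\in B$ has $d$ neighbors in $K$, each vertex of $K$ has $d$ neighbors) gives $b\le |K|$, which is too weak. The cleaner route is to use $|N(K)|-|B(K)|\ge|N(K)|-|K|\ge (d/8-1)|K|$, which follows from $b\le h\le a\le a'$. We then count $2$-linked sets of size $k$ with \Cref{lem:linked-set-count}, which gives at most $(ed^2)^{k}$ sets per root vertex. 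The problem is the root: the number of choices, $\binom{2d-1}{d}$, is exponential in $d$. This is the main obstacle. It is handled by the lower bound $|K|\ge d/2$: the total is
\[
\sum_{k\ge d/2}\binom{2d-1}{d}(ed^2)^{k}2^{-(d/8-1)k}\le 4^{d}\,2^{-\Omega(d^2)}=o(1),
\]
since for $k\ge d/2$ the factor $2^{-(d/8-1)k+O(k\log d)}$ is $2^{-\Omega(d^2)}$. This is precisely why the threshold $d/2$ appears in \eqref{eq:K_size}.

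Combining the two ranges shows the inner sum is $o(1)$. Therefore $\sum_{\cF\setminus\cF_0}P\le o(1)\sum_{\cF}P$, which gives the lemma. The delicate point to double-check is the overlap between the ranges: $K$ with $|K|\ge d/2$ but $g$ just below $d^2/100$ must satisfy $g\le d^6$ for \Cref{lem:isoperimetry}(b) to apply, and this holds trivially since $d^2/100<d^6$.
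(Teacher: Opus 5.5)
Your proof is correct, but it is organized more simply than the paper's. The paper iterates merges until a max-good labeling is reached. That requires several pieces of machinery:
\begin{itemize}
\item the levels $\cF_\ell$;
\item the potential-function argument of \Cref{lem:ell_0}, to guarantee termination with $\ell\le\ell_0$;
\item assigning each $f$ to its minimal level;
\item the level-to-level bound $\sum_{\cF_{\ell+1}}P\le 2^{-\Omega(d^2/\log d)}\sum_{\cF_\ell}P$, followed by a geometric series.
\end{itemize}
You use a single merge instead. The map $f\mapsto(m_K(f),K)$, with $K$ chosen canonically, is injective into pairs $(f',K)$ with $f'\in\cF$, not $f'\in\cF_0$. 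This gives $\sum_{\cF\setminus\cF_0}P\le\varepsilon\sum_{\cF}P$, which rearranges to $\sum_{\cF}P\le(1-\varepsilon)^{-1}\sum_{\cF_0}P$, so no termination argument or level bookkeeping is needed. Iteration is essential in \Cref{sec:proof-spectral}, where the gain per merge grows with the level. Here the per-merge gain is uniform, so your shortcut loses nothing. The core estimate is the same in both proofs: the weight identity of \Cref{obs:merge-preserve-labeling} combined with \Cref{cor:H-middle-layers}.

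There are three smaller points to fix.

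\begin{itemize}
\item \textbf{Your ``small sets'' range is empty.} If $|K|\ge d/2$ and $|K|\le d^6$, then \Cref{lem:isoperimetry}(b) gives $g=|N(K)|\ge d^2/16$; if $|K|>d^6$, then $g\ge|K|>d^6$. Either way $g\ge d^2/100$. The direct linked-set count you give there is correct but unnecessary. In the paper, the threshold $d/2$ exists exactly to force $g\ge d^2/16$ so the container corollary applies, not to handle the root count.
\item \textbf{The large-range justification is wrong as written.} The claim ``$d/(100\log d)\gg\log g$ for $g\le 2^{O(d)}$'' fails when $\log g=\Theta(d)$. What is true is that $g/(d\log d)\gg\log g$ uniformly for $g\ge d^2/100$, because $g/\log g$ is increasing. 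That suffices for $\sum_g g^3\,2^{-\Omega(g/(d\log d))}=o(1)$. Alternatively, follow the paper: $t_1=\Omega(d^2)$ holds uniformly (by part (b) when $a'\le d^6$, by part (a) otherwise). Each parameter class then contributes $2^{-\Omega(d^2/\log d)}$, which absorbs the $2^{O(d)}$ parameter tuples.
\item \textbf{A factor of $|\cE|$ is affordable.} By the same uniform bound $t_1=\Omega(d^2)$, such a factor would be absorbed, contrary to your remark. The paper in fact carries a factor $\binom{2d-1}{d}^4$.
\end{itemize}
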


The proof of \Cref{lem:almost-all-good} requires some preparation. As in the previous section, we classify $\cF$ using the notion of ``level," (\textit{cf.} \Cref{def:level}) but in a slightly different way.

\begin{mydef}[Level]
For every $\ell\geq 1$, we say that $f\in\cF$ is \textit{at level $\ell$} (denoted $f \in \cF_\ell$) if there is a sequence of functions $f_0,\dots,f_\ell$ such that
\begin{enumerate}[(i)]
    \item $f_0=f$, $f_\ell\in\cF_0$;
    \item for every $i=0,\dots,\ell-1$, $f_{i+1}=m_{K_i}(f_i)$ where $K_i$ is a maximum component of $f_i$ with $|K_i|\geq d/2$ and $|[K_i]|\leq (1-c)\binom{2d-1}{d}$.
\end{enumerate}
\end{mydef}

\begin{lem}\label{lem:ell_0} 
Every $f\in\cF$ belongs to some $\cF_\ell$ where $\ell$ is at most the crude bound $\ell_0:=d^2{2d-1 \choose d}$.
\end{lem}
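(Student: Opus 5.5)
We plan to show that repeatedly merging ``bad'' maximum components, starting from any $f\in\cF$, must terminate at a max-good labeling within $\ell_0$ steps. Concretely, we run the greedy process $f_0:=f$: if $f_i\in\cF_0$ we stop; otherwise $f_i$ has a maximum component $K_i$ with $|K_i|\ge d/2$ and $|[K_i]|\le (1-c)\binom{2d-1}{d}$, and we set $f_{i+1}:=m_{K_i}(f_i)$. By \Cref{obs:merge-preserve-labeling}, each $f_{i+1}$ is again a centered legal labeling. If the process stops at step $\ell$, then $f\in\cF_\ell$ by definition (the case $\ell=0$, $f\in\cF_0$, is trivial, or we may pad by noting membership in $\cF_0$). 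So everything reduces to bounding the number of steps by $\ell_0$.

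For this we use a potential function. The cleanest choice is the weight exponent $\Phi(f):=\log P(f)=|\{v\in\cO: f|_{N(v)}\text{ is constant}\}|$. It satisfies $0\le \Phi\le |\cO|=\binom{2d-1}{d}$. By \Cref{obs:merge-preserve-labeling}, $\Phi(f_{i+1})=\Phi(f_i)+|N(K_i)|-|B(K_i)|$, so it suffices to show that $|N(K_i)|-|B(K_i)|\ge 1$ at every step. Since $B(K_i)\subseteq N(K_i)$, we need $B(K_i)\ne N(K_i)$. If equality held, every neighbor of $K_i$ would have all its neighbors in $K_i$, forcing $N^2(K_i)\subseteq K_i$. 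Then $K_i$ would be a union of connected components of $\cE$ in the distance-2 sense, and since $\Gamma$ is connected, $K_i=\cE$. But $\cE=[\cE]$ has size $\binom{2d-1}{d}>(1-c)\binom{2d-1}{d}$, which contradicts $|[K_i]|\le(1-c)\binom{2d-1}{d}$.

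Hence $\Phi$ strictly increases by at least $1$ per step. The process therefore halts after at most $\binom{2d-1}{d}\le \ell_0$ steps, and it can halt only at a max-good labeling. This gives the claim, with a good deal of room to spare in the crude bound $\ell_0$. The only step requiring care is the strict-increase argument, namely ruling out $N(K)=B(K)$. This relies on the connectivity of $Q_{2d-1}^{\md}$ (standard, e.g.\ via its Hamiltonicity, or directly by moving one coordinate at a time) together with the size restriction on $[K_i]$.
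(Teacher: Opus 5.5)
Your argument is correct. It uses the same monotone-potential strategy as the paper but with a different potential.

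The paper takes the total variation $S_f=\sum_{\dist(x,y)=2}|f(x)-f(y)|$ over pairs in $L_{d-1}$. It checks by hand that a merge lowers $S_f$ by the count of distance-$2$ pairs straddling $K$, and bounds $0\le S_f\le d^2|L_{d-1}|$; that bound is where $\ell_0=d^2\binom{2d-1}{d}$ comes from.

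You use $\Phi=\log P(f)$, the number of $v\in\cO$ on whose neighborhood $f$ is constant. The increment $|N(K)\setminus B(K)|$ can then be read off from \Cref{obs:merge-preserve-labeling} with no new computation. The range $0\le\Phi\le|\cO|$ gives the sharper bound $\binom{2d-1}{d}$.

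The two strict-monotonicity claims are really the same fact: $v\in N(K)\setminus B(K)$ exactly when $v$ is a common neighbor of some $x\in K$ and $y\in\cE\setminus K$. In both versions, nonemptiness comes from connectivity of $\Gamma$ together with $K\neq\cE$, which you justify via $|[K]|\le(1-c)\binom{2d-1}{d}<|\cE|$. You spell out this connectivity step, whereas the paper leaves it implicit in ``since $|K|<\binom{2d-1}{d}$, the set is non-empty''. For connectivity you do not need Hamiltonicity; moving a single $1$ to a new coordinate through an intermediate vertex of $L_d$ suffices.

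A nice feature of your route is that termination is driven by the same quantity, the growth of the weight $P$, that powers the proof of \Cref{lem:almost-all-good}. The paper's potential is a purely function-level quantity independent of the weights. Since $\ell_0$ only enters that proof through a convergent geometric sum, either bound works equally well.
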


\begin{proof} 
We may assume that $f \in \cF \setminus \cF_0$. For each $f\in\cF$, define the potential function
$$S_f := \sum_{\substack{x,y \in L_{d-1} \\ \operatorname{dist}(x,y)=2}} |f(x)-f(y)|.$$
Observe that $S_f \geq 0$. Since $\operatorname{dist}(x,y)=2$ implies $|f(x)-f(y)| \leq 1$, we have an upper bound 
$$S_f \leq |\{(x,y) \in L_{d-1} \times L_{d-1} : \operatorname{dist}(x,y)=2\}| \leq d^2 |L_{d-1}|.$$
We claim that the merge operation strictly reduces the potential $S_f$:

\begin{claim}
    For $f \in \mathcal{F} \setminus \mathcal{F}_0$, let $K$ be a maximum component of $f$ such that $|K| \geq d/2$ and $|[K]| \leq (1-c)\binom{2d-1}{d}$. Then $S_{m_K(f)} < S_f$.
\end{claim}

\begin{subproof}
We analyze the contribution of each pair $(x,y)$ with $\dist(x,y)=2$ depending on their membership in $K$:
\begin{itemize}
    \item If $x,y \in K$, then $m_K(f)(x) - m_K(f)(y) = f(x) - f(y)$.
    \item If $x,y \in L_{d-1} \setminus K$, then $m_K(f)(x) - m_K(f)(y) = f(x) - f(y)$.
    \item If $x \in K$ and $y \in L_{d-1} \setminus K$, then $y \in N^2(K) \setminus K$ (since $\dist(x,y)=2$), which implies $f(y) = f(x) - 1$. On the other hand, by the merge operation, $m_K(f)(x) = m_K(f)(y)$. Thus, $|m_K(f)(x) - m_K(f)(y)|  <  |f(x) - f(y)|$.
\end{itemize}
It follows that
$$S_{m_K(f)} = S_f - |\{(x,y) : x \in K, y \in L_{d-1} \setminus K, \operatorname{dist}(x,y)=2\}|.$$
Since $|K| < \binom{2d-1}{d}$, the set on the right-hand side is non-empty, so $S_{m_K(f)} < S_f$. 
\end{subproof}

Suppose for the sake of contradiction that a function $f \in \mathcal{F}$ is not at level $\ell$ for any $\ell \leq \ell_0$. This implies there exists a sequence of functions $f_0, \dots, f_{\ell_0+1}$ such that:
\begin{enumerate}[(i)]
    \item $f_0 = f$, and none of $f_0, \dots, f_{\ell_0}$ lie in $\mathcal{F}_0$;
    \item For every $i = 0, \dots, \ell_0$, $f_{i+1} = m_{K_i}(f_i)$, where $K_i$ is a maximum component of $f_i$ with $|K_i|\geq d/2$ and $|[K_i]|\leq (1-c)\binom{2d-1}{d}$.
\end{enumerate}
By \Cref{obs:merge-preserve-labeling}, we know that $f_i \in \mathcal{F}$ for all $i$. Furthermore, since $S_{f_0} \leq \ell_0$ and the potential strictly decreases at each step, we have
$$S_{f_{\ell_0+1}} \leq S_{f_0} - (\ell_0 + 1) \leq \ell_0 - \ell_0 - 1 = -1,$$
which contradicts the fact that $S_{f_{\ell_0+1}} \geq 0$. Thus, every $f \in \mathcal{F}$ must be at level $\ell$ for some $0 \leq \ell \leq \ell_0$.
\end{proof}

Note that a legal labeling $f$ may initially belong to multiple levels $\cF_\ell$, as there might be different sequences of mergings that turn $f$ into a max-good function. To resolve this, we uniquely assign each $f$ to $\cF_\ell$ where $\ell$ is the minimum such index. This ensures that every $f \in \cF$ has a unique membership for some $\cF_\ell$.

\subsection{Proof of the key lemma} With the setup established, we are now ready to prove \Cref{lem:almost-all-good}. 

\begin{proof}[Proof of \Cref{lem:almost-all-good}]
Consider $f\in\cF_{\ell+1}$ for some $\ell \ge 0$. Let $K$ be any maximum component of $f$ with $|K|\geq d/2$ and $|[K]|\leq (1-c)\binom{2d-1}{d}$, and again write $\widetilde f$ for $m_K(f)$ for simplicity. 
Recall from \Cref{obs:merge-preserve-labeling} that
\beq{eq:P/P} 
P(\widetilde f)/P(f)= 2^{|N(K)|-|B(K)|}.
\enq
Now, since for any $f \in \cF_{\ell+1}$, there is an $f' \in \cF_\ell$ and a 2-linked component $K\subseteq L_{d-1}$ satisfying \eqref{eq:K_size} such that $m_K(f)=f'$, we have
    \begin{align*}
        \sum_{f\in\cF_{\ell+1}}P(f)&\leq \sum_{f'\in \cF_\ell }\sum_{\substack{f\in\cF_{\ell+1}\\m_K(f)=f'\text{ for some $K$} }}P(f)\\
        &\stackrel{\eqref{eq:P/P}}{=}\sum_{f'\in \cF_\ell } P(f') \sum_{\substack{f\in\cF_{\ell+1}\\m_K(f)=f'\text{ for some $K$}  }} 2^{-|N(K)|+|B(K)|}\\
        &\leq \sum_{f'\in \cF_\ell } P(f') \sum_{\substack{K\subseteq\cE\text{ 2-linked}\\d/2\leq |K|,\,|[K]|\leq (1-c)\binom{2d-1}{d}}}2^{-|N(K)|+|B(K)|}.
    \end{align*}

Recall that $c=1/10$. Observe that as $d\to\infty$, we have $(1-c)\binom{2d-1}{d}\leq \binom{2d-1-c}{d}$; to see this, we can estimate the ratio as follows:
\begin{align*}
    \frac{\binom{2d-1-c}{d}}{\binom{2d-1}{d}}&=\frac{(2d-1-c)(2d-2-c)\cdots(d-c)}{(2d-1)(2d-2)\cdots d}\geq \left(\frac{d-c}{d}\right)^d \overset{d\to\infty}{\longrightarrow} 1/e^{c}>9/10.
\end{align*} 
Thus, for all $K$ with $|K|\ge d/2$ and $|[K]|\leq (1-c)\binom{2d-1}{d}\leq\binom{2d-1-c}{d}$, \Cref{lem:isoperimetry} implies that 
\[
|N(K)|\ge d^2/16
\qquad\text{and}\qquad
|N(K)|-|K|\ge d^2/16-d/2=\Omega(d^2).
\]
We then know from \Cref{cor:H-middle-layers} that
\begin{align*}
  \sum_{\substack{K\subseteq\cE\text{ 2-linked}\\d/2\leq |K|,\,|[K]|\leq (1-c)\binom{2d-1}{d}}}2^{-|N(K)|+|B(K)|}&\le \sum_{\substack{a',g,b,h\\g\geq d^2/100,\, a'\leq (1-c)\binom{2d-1}{d}}}|\cH(a',g,b,h)|2^{-g+b}\\
    &\leq \sum_{\substack{a',g,b,h\\g\geq d^2/100,\, a'\leq (1-c)\binom{2d-1}{d}}} 2^{-\Omega\left(\frac{t_1}{\log d}\right)} \\
    &\le \binom{2d-1}{d}^4 2^{-\Omega\left(\frac{d^2}{\log d}\right)}  =2^{-\Omega\left(\frac{d^2}{\log d}\right)}.
\end{align*}
Substituting this bound back, we obtain
\[
\frac{ \sum_{f\in\cF_{\ell+1}}P(f)}{ \sum_{f'\in\cF_{\ell}}P(f')}\leq 2^{-\Omega\left(\frac{d^2}{\log d}\right)}.
\]
Since this inequality holds for all $\ell=0,1,\dots,\ell_0$ (see \Cref{lem:ell_0} for the definition of $\ell_0$), summing over all levels yields
\[
\frac{\sum_{f\in\cF}P(f)}{\sum_{f\in\cF_{0}}P(f)}\leq \frac{\sum_{\ell=0}^{\ell_0}\sum_{f\in\cF_\ell}P(f)}{\sum_{f\in\cF_{0}}P(f)}\leq \sum_{\ell=0}^{\ell_0} \left(2^{-\Omega\left(\frac{d^2}{\log d}\right)}\right)^{\ell}\leq 1+o(1).\qedhere
\]
\end{proof}

Combining \Cref{obs:centered} and \Cref{lem:almost-all-good}, we obtain the following corollary. 

\begin{cor}\label{cor:max-good}
For almost all centered $\ZZ$-homomorphisms $h:L_{d-1}\cup L_{d}\to\ZZ$, the associated legal labeling $\frac{1}{2}h|_{L_{d-1}}$ is max-good.
\end{cor}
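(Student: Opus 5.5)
The corollary follows by translating \Cref{lem:almost-all-good} from weighted labelings to homomorphisms. The translation goes through the correspondence of \Cref{obs:centered}.

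The map $h\mapsto f=\frac12 h|_{L_{d-1}}$ sends $\Hom_{v_0}(Q_{2d-1}^{\md})$ onto $\cF$. By \Cref{obs:centered}, the fibre over each $f\in\cF$ has size exactly $P(f)$. Summing over the fibres gives
\[
|\Hom_{v_0}(Q_{2d-1}^{\md})|=\sum_{f\in\cF}P(f),
\]
as in \eqref{eq:task}. Restricting the same fibre decomposition to the max-good labelings gives
\[
|\{h\in\Hom_{v_0}(Q_{2d-1}^{\md}) : \tfrac12 h|_{L_{d-1}}\in\cF_0\}|=\sum_{f\in\cF_0}P(f).
\]

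Taking the ratio of these two counts and applying \Cref{lem:almost-all-good}, the fraction of centered homomorphisms whose associated labeling is max-good is
\[
\frac{\sum_{f\in\cF_0}P(f)}{\sum_{f\in\cF}P(f)}=\frac{1}{1+o(1)}=1-o(1).
\]
This is exactly the statement that almost all centered $\ZZ$-homomorphisms have a max-good associated labeling.

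The only point that needs checking is that "centered" means the same thing on both sides. A centered homomorphism has $h(v_0)=0$, so $f(v_0)=0$ and $f$ is centered. Conversely, every $h$ in the fibre of a centered $f$ satisfies $h(v_0)=2f(v_0)=0$. With this confirmed, there is no real obstacle, since all the work is contained in \Cref{lem:almost-all-good}.
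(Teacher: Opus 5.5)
Your proposal is correct and follows the paper's argument exactly: the paper obtains this corollary in one line by combining the fibre count of \Cref{obs:centered} (each centered legal labeling $f$ lifts to exactly $P(f)$ centered homomorphisms) with \Cref{lem:almost-all-good}. You have simply written out that ratio computation, including the check that ``centered'' agrees on both sides, in more detail.
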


Similarly, we can define minimum component and min-good legal labeling.

\begin{mydef}[Minimum component]\label{def:min-component}
Let a legal labeling $f:L_{d-1}\to\ZZ$ be given. We say that $K\subseteq L_{d-1}$ is a \textit{minimum component} (of $f$) if
\begin{enumerate}[(i)]
\item $K$ is 2-linked; and
\item for every $x\in K$ and $y\in N^2(K)\setminus K$, we have $f(y)>f(x)$.
\end{enumerate}
\end{mydef}

\begin{mydef}[Min-good]
Let $f$ be a legal labeling. Let $c=1/10$. We say that $f$ is \textit{min-good} if there is no minimum component $K$ such that $|K|\geq d/2$ and $|[K]|\leq (1-c)\binom{2d-1}{d}$.
\end{mydef}

Repeating the proof of \Cref{cor:max-good} in a symmetric fashion (or, equivalently, by applying the exact same argument to the negated labeling $-f$), we arrive at the analogous statement for min-good labelings.

\begin{cor}\label{cor:min-good}
For almost all centered $\ZZ$-homomorphisms $h:L_{d-1}\cup L_{d}\to\ZZ$, the associated legal labeling $\frac{1}{2}h|_{L_{d-1}}$ is min-good.
\end{cor}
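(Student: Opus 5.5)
The plan is to reduce \Cref{cor:min-good} to \Cref{cor:max-good} through negation. Write $\iota(h)=-h$. If $h\in\Hom_{v_0}(Q_{2d-1}^{\md})$, then $-h$ is again a $\mathbb Z$-homomorphism with $(-h)(v_0)=0$. Since $\iota$ is an involution, it is a bijection of $\Hom_{v_0}(Q_{2d-1}^{\md})$ onto itself. Consequently, if $\mathbf h$ is uniform on $\Hom_{v_0}$, then $-\mathbf h$ is also uniform.

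Next I would check that negation exchanges the two notions of goodness. Let $f=\frac12 h|_{L_{d-1}}$, so the labeling associated with $-h$ is $-f$. Both conditions in \Cref{def:max-component} and \Cref{def:min-component} depend only on the graph and the order of the values. Condition (i), $2$-linkedness, does not involve $f$ at all. Condition (ii) reads $f(y)>f(x)$ for every $x\in K$ and $y\in N^2(K)\setminus K$, and this holds exactly when $(-f)(y)<(-f)(x)$. Hence $K$ is a minimum component of $f$ if and only if it is a maximum component of $-f$.

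The size constraints $|K|\ge d/2$ and $|[K]|\le(1-c)\binom{2d-1}{d}$ depend only on $K$, not on the labeling. So $f$ is min-good if and only if $-f$ is max-good.

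To conclude, consider the event that the labeling of $\mathbf h$ fails to be min-good. This is the same as the event that the labeling of $-\mathbf h$ fails to be max-good. Because $-\mathbf h$ is uniform, this event has the same probability as the event that the labeling of $\mathbf h$ fails to be max-good. By \Cref{cor:max-good}, that probability is $o(1)$.

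There is no real obstacle here. The one point that needs checking is that $v_0$ plays no special role in the definitions of maximum and minimum components or of goodness. It does not: $v_0$ matters only in the merge operation, and the merge is used only inside the proof of \Cref{lem:almost-all-good}, not in the statements of the definitions.
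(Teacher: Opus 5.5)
Your proof is correct and is essentially the paper's argument. The paper deduces \Cref{cor:min-good} by applying the max-good argument to the negated labeling $-f$. You make this precise with the measure-preserving involution $h\mapsto -h$ on $\Hom_{v_0}(Q_{2d-1}^{\md})$, together with the observation that minimum components of $f$ are exactly maximum components of $-f$ and satisfy the same size constraints, which transfers the conclusion of \Cref{cor:max-good} directly without re-running its proof.
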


Finally, using the symmetry between $L_{d-1}$ and $L_d$ (\Cref{obs:symmetry}), we arrive at the following. (To apply the merge operation to $L_d$, we pick an arbitrary $w_0 \in N(v_0)$ to play the role of $v_0$.)
\begin{cor}\label{cor:max-min-good}
    For almost all centered $\ZZ$-homomorphisms $h:L_{d-1}\cup L_{d}\to\ZZ$, each of the associated legal labelings $\frac{1}{2}h|_{L_{d-1}}$ and $\frac{1}{2}(h|_{L_{d}}+1)$ is both max-good and min-good.
\end{cor}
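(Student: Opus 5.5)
The plan is to get \Cref{cor:max-min-good} from four applications of \Cref{cor:max-good} and \Cref{cor:min-good}, combined with a union bound. By \Cref{cor:max-good} and \Cref{cor:min-good}, the set of centered $h$ for which $\frac12 h|_{L_{d-1}}$ fails to be max-good, or fails to be min-good, is an $o(1)$ fraction of $\Hom_{v_0}(Q_{2d-1}^{\md})$. So the only remaining task is to prove the same two statements for the labeling $g:=\frac12(h|_{L_d}+1)$ of $L_d$.

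\textbf{Step 1: the labeling $g$.} Since $h(v_0)=0$ and $h$ alternates parity, $h$ takes odd values on $L_d$. Hence $g$ is integer-valued and $g$ is a legal labeling of $L_d$. Fix $w_0\in N(v_0)$, so that $h(w_0)=\pm1$.

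Recentre by setting $h':=h-h(w_0)$. This $h'$ is a $\ZZ$-homomorphism rooted at $w_0$, and $\frac12 h'|_{L_d}=g-g(w_0)$ up to the constant shift (namely $g(w_0)\in\{0,1\}$). Being max-good or min-good is invariant under adding a constant, because maximum and minimum components depend only on relative values.

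\textbf{Step 2: the bijection.} The map $h\mapsto h-h(w_0)$ is a bijection from $\Hom_{v_0}$ onto the set $\{h'\in\Hom_{w_0}: h'(v_0)\in\{\pm1\}\}$. That set is all of $\Hom_{w_0}$, because $v_0\sim w_0$ forces $h'(v_0)=\pm1$. Therefore the uniform measure on $\Hom_{v_0}$ pushes forward to the uniform measure on $\Hom_{w_0}$.

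\textbf{Step 3: transfer by symmetry.} Apply the automorphism $\tau$ of \Cref{obs:symmetry}, which swaps $L_{d-1}$ and $L_d$. Then $h'\circ\tau^{-1}$ is a uniform element of $\Hom_{\tau(w_0)}$ with $\tau(w_0)\in L_{d-1}$. The labeling $g$ corresponds to its rescaled restriction to $L_{d-1}$.

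The definitions of maximum and minimum components involve only 2-linkedness, second neighbourhoods, $|K|$ and $|[K]|$. All of these are preserved by $\tau$, and $|L_d|=|L_{d-1}|=\binom{2d-1}{d}$, so the size thresholds match as well. Consequently \Cref{cor:max-good} and \Cref{cor:min-good}, which hold for an arbitrary root in $L_{d-1}$, show that $g$ is max-good and min-good with probability $1-o(1)$.

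\textbf{Step 4: union bound.} A union bound over the four failure events finishes the proof.

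The main point requiring care is Step 2: checking that the root change is measure-preserving and that the $+1$ shift in $\frac12(h|_{L_d}+1)$ is harmless. Both follow from translation invariance of the goodness notions. Everything else is bookkeeping.
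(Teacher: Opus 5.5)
Your proposal is correct and follows the paper's route. The paper uses the automorphism of \Cref{obs:symmetry} and takes a root $w_0\in N(v_0)$ to transfer \Cref{cor:max-good} and \Cref{cor:min-good} to the labeling of $L_d$, then finishes with a union bound. Your explicit bijection $h\mapsto h-h(w_0)$ from $\Hom_{v_0}$ onto $\Hom_{w_0}$, together with the shift-invariance of max/min-goodness, spells out what the paper leaves to a one-line parenthetical (in particular, why the $+1$ shift and $g(w_0)\in\{0,1\}$ are harmless).
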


\subsection{Concluding the proof of \Cref{thm:Z-hom-middle-layers}}

 We first establish the following lemma.

\begin{lem}\label{lem:three}
    Suppose $h \in \Hom_{v_0}(Q_{2d-1}^{\md})$ satisfies that
\begin{align}\label{eq:typical-max-min-good}
    \text{$\frac{1}{2}h|_{L_{d-1}}$ is a  max-good and min-good legal labeling of $L_{d-1}$.}
\end{align}
Then $h|_{L_{d}}$ takes at most three values. 
\end{lem}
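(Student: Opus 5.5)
The plan is to reduce \Cref{lem:three} to a statement about $f:=\frac12 h|_{L_{d-1}}$ alone. By the proof of \Cref{obs:centered}, every $v\in L_d$ has $f(N(v))\subseteq\{i,i+1\}$ for some $i$, and then $h(v)\in\{2i-1,2i+1\}$ or $h(v)=2i+1$. Hence if $f$ takes values only in $\{k,k+1\}$, then $h|_{L_d}\subseteq\{2k-1,2k+1,2k+3\}$, which is at most three values. So it suffices to show that
\[
M-m\le 1,\qquad\text{where } M:=\max f,\ m:=\min f .
\]
We argue by contradiction and assume $M-m\ge 2$. Write $N_0:=\binom{2d-1}{d}=|L_{d-1}|$.

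The basic tool is the following. For any threshold $s$, every $2$-linked component of $\{f\ge s\}$ is a maximum component of $f$ in the sense of \Cref{def:max-component}, by maximality. Max-goodness then forces each such component $K$ to be either small, $|K|<d/2$, or huge, $|[K]|>(1-c)N_0$. The same holds for $2$-linked components of $\{f\le s\}$, which are minimum components, using min-goodness. To move from "small" to "huge" we bootstrap one level down. Let $K$ be a component of $\{f=M\}$ and $K'$ the component of $\{f\ge M-1\}$ containing it. Since $f=M-1$ on $N^2(K)\setminus K$, we get $K'\supseteq N^2(K)\supseteq N^2(x)$ for any $x\in K$, so $|K'|\ge d/2$ ($N^2(x)$ has order $d^2$ points). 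Therefore $|[K']|>(1-c)N_0$, and the symmetric statement holds at the bottom.

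\emph{Case $M-m\ge 3$.} Let $K'$ be the component of $\{f\ge M-1\}$ containing a maximizer and $L'$ the component of $\{f\le m+1\}$ containing a minimizer. Both have $[\,\cdot\,]$ of size $>(1-c)N_0$. Since $M-1>m+1$, any two vertices of $K'$ and $L'$ differ in $f$ by at least $2$, so they have no common neighbour. Thus $N(K')\cap N(L')=\emptyset$, and then $[K']\cap[L']=\emptyset$ because every vertex has a nonempty neighbourhood. This gives $2(1-c)N_0>N_0$, a contradiction.

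\emph{Case $M-m=2$.} If some component $K$ of $\{f=M\}$ and some component $L$ of $\{f=m\}$ both have size $\ge d/2$, the same disjointness argument applies, since values $M$ and $m$ differ by $2$ and so $N(K)\cap N(L)=\emptyset$. The remaining situation is when, say, every top component is small. Then the component $K'$ of $\{f\ge m+1\}$ containing a maximizer has $|[K']|>(1-c)N_0$.

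This remaining situation is the step I expect to be the main obstacle. I would first try to show that $\{f=m\}$ is then confined to $L_{d-1}\setminus [K']$ together with a thin boundary layer, of total size at most about $cN_0$. Then I would apply \Cref{lem:isoperimetry}(a), via the symmetry of \Cref{obs:symmetry}, to the complement $L_{d-1}\setminus[K']$. The aim is to show that a bottom component there is a minimum component of size $\ge d/2$ (its $N^2$-closure contains some $N^2(y)$), whose $[\,\cdot\,]$ must therefore be huge. That is incompatible with being squeezed into a set of size $\le cN_0$. If this is too lossy, the fallback is to apply the bootstrap symmetrically at the bottom as well, obtaining large $[K']$ and $[L']$ with $K'\subseteq\{f\ge m+1\}$ and $L'\subseteq\{f\le m+1\}$. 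One would then exploit that $K'$ contains a point of value $M=m+2$ and $L'$ a point of value $m$ to derive a contradiction from the overlap of $N(K')$ and $N(L')$, which can only occur at vertices where $f\equiv m+1$.
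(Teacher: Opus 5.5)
There is a genuine gap: the statement you reduce to, $M-m\le 1$, is false under the hypotheses, so your case $M-m=2$ cannot be closed.

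\textbf{Counterexample.} Take $x,y\in L_{d-1}$ with $\dist(x,y)\ge 4$. Let $f(x)=2$, $f(y)=0$ and $f\equiv 1$ elsewhere; subtract a constant to make $f(v_0)=0$, which does not affect max- or min-goodness. This $f$ is legal. The maximum components of any $f$ are exactly the $2$-linked components of superlevel sets $\{f\ge s\}$. Here they are $\{x\}$, $L_{d-1}\setminus\{y\}$ and $L_{d-1}$, since the distance-$2$ graph on $L_{d-1}$ is a Johnson graph and stays connected after deleting a vertex. The first has size $1<d/2$. The other two have $[\,\cdot\,]=L_{d-1}$, because every $w\in L_d$ has $d\ge 2$ neighbours. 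So $f$ is max-good, and symmetrically min-good. Yet $M-m=2$, while every $h$ with $h|_{L_{d-1}}=2f$ takes only two values on $L_d$.

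This is precisely your ``remaining situation'': the top component $\{x\}$ is small, $K'=L_{d-1}\setminus\{y\}$ is huge, and the bottom component $\{y\}$ is small too, so there is nothing to contradict. Your fallback fails on the same example: $L'=L_{d-1}\setminus\{x\}$ and $N(K')=N(L')=L_d$.

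\textbf{A second slip.} In the case $M-m\ge 3$, take $M-m=3$. Then values $\ge M-1=m+2$ and values $\le m+1$ differ by only $1$, so $K'$ and $L'$ may share neighbours. Your disjointness argument only excludes $M-m\ge 4$. Moreover, $M-m=3$ is compatible with the conclusion, e.g.\ $h|_{L_{d-1}}\subseteq\{-2,0,2,4\}$ with $h|_{L_d}\subseteq\{-1,1,3\}$.

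\textbf{The missing idea.} Bound the spread of $h|_{L_d}$ directly rather than that of $f$, using the fact that small peaks of $f$ create no new values on $L_d$. If $w\in L_d$ has $h(w)=2M+1$, then $N(w)\subseteq\{f=M\}$, and $N(w)$ is $2$-linked of size $d$. So, by max-goodness, one of two things happens.
\begin{itemize}
\item Some component $K$ of $\{f=M\}$ has $|[K]|>(1-c)N_0$. Then $h(N(K))\subseteq\{2M+1,2M-1\}$.
\item All such components have size $<d/2$. Then $\max h|_{L_d}=2M-1$, while your bootstrapped $K'\subseteq\{f\ge M-1\}$ has $|[K']|>(1-c)N_0$ and $h(N(K'))\subseteq\{2M-1,2M-3\}$.
\end{itemize}
Since $|N(K)|\ge|[K]|$, in either case at least $(1-c)N_0$ vertices of $L_d$ take values in $\{\max h|_{L_d},\max h|_{L_d}-2\}$. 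By min-goodness the same holds for $\{\min h|_{L_d},\min h|_{L_d}+2\}$. As $2(1-c)>1$, these two pairs must meet, so $\max h|_{L_d}-\min h|_{L_d}\le 4$, giving at most three (odd) values.

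This is the paper's argument. Your bootstrap step and disjointness count are exactly its ingredients, but they must be applied to the values of $h$ on $L_d$, not to $f$.
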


To prove \Cref{lem:three}, we introduce the following definitions. Let $f = \frac{1}{2}h|_{L_{d-1}}$. For every maximum component $K \subsetneq L_{d-1}$, the \textit{enlarged maximum component} $K^{(+)}$ is the unique maximum component containing $K$ such that $\min (f|_{K^{(+)}})=\min (f|_{K})-1$; if $K=L_{d-1}$, we simply let $K^{(+)}=K$. Similarly, for every minimum component $K \subsetneq L_{d-1}$, the \textit{enlarged minimum component} $K^{(+)}$ is the unique minimum component containing $K$ such that $\max (f|_{K^{(+)}})=\max (f|_{K})+1$; again, if $K=L_{d-1}$, we let $K^{(+)}=K$.

\begin{proof}[Proof of \Cref{lem:three}]
    Let $k=\max (h|_{L_{d-1}})$ and let $K_1, K_2, \ldots$ be the maximum components in $L_{d-1}$ such that $h=k$ on each $K_i$. Since $\frac{1}{2}h|_{L_{d-1}}$ is max-good, for each $i$, either $|K_i|<d/2$ or $|[K_i]|\geq (1-c)\binom{2d-1}{d}$.

\noindent \textit{Case 1:} $|[K_{i_0}]|\geq (1-c)\binom{2d-1}{d}$ for some $i_0$.

In this case, $|N(K_{i_0})|\geq (1-c)\binom{2d-1}{d}$, so at least $(1-c)\binom{2d-1}{d}$ vertices in $L_d$ take values in $\{k+1,k-1\} \sub\{\max(h|_{L_d}),\max(h|_{L_d})-2\}$.

\noindent \textit{Case 2:} $|K_i|< d/2$ for all $i$. 

In this case, first notice that no vertices in $L_d$ take the value $k+1$, so $\max(h|_{L_d})=k-1$. Furthermore, for each $i$ we have $|K_i^{(+)}|\geq |N^2(K_i)|\geq  \Omega(d^2)$. Since $\frac{1}{2}h|_{L_{d-1}}$ is max-good, we deduce that $|[K_0^{(+)}]|\geq (1-c)\binom{2d-1}{d}$; in particular, $|N(K_0^{(+)})|\geq (1-c)\binom{2d-1}{d}$. Since $h(N(K_0^{(+)}))\subseteq \{k-1,k-3\}$, at least $(1-c)\binom{2d-1}{d}$ vertices in $L_{d}$ take values in $\{k-1,k-3\}=\{\max(h|_{L_d}),\max(h|_{L_d})-2\}$.
    
    In both cases above, at least $(1-c)\binom{2d-1}{d}$ vertices in $L_d$ take values in $\{\max(h|_{L_d}),\max(h|_{L_d})-2\}$. 
    By symmetry, using the fact that $\frac{1}{2}h|_{L_{d-1}}$ is min-good, we can also conclude that at least $(1-c)\binom{2d-1}{d}$ vertices in $L_{d}$ take values in $\{\min(h|_{L_d}),\min(h|_{L_d})+2\}$. This implies that we must have $|\max(h|_{L_d})-\min(h|_{L_d})|\le 4$ (otherwise the total number of vertices involved would exceed $|L_d|$), so $h|_{L_d}$ takes at most three values.
\end{proof}

Due to the symmetry between $L_d$ and $L_{d-1}$, we can also conclude:

\begin{lem}\label{lem:three-2}
    Suppose $h \in \Hom_{v_0}(Q_{2d-1}^{\md})$ satisfies that
\begin{align}\label{eq:typical-max-min-good-2}
    \text{$\frac{1}{2}(h|_{L_{d}}+1)$ is a  max-good and min-good legal labeling of $L_{d}$.}
\end{align}
Then $h|_{L_{d-1}}$ takes at most three values.  
\end{lem}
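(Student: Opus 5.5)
The plan is to deduce \Cref{lem:three-2} from \Cref{lem:three} by transporting everything along the automorphism $\tau$ of \Cref{obs:symmetry}. There is one small wrinkle: the lemmas are stated for rooted homomorphisms with root in $L_{d-1}$. Max-goodness and min-goodness do not depend on centering, though. The conclusion of \Cref{lem:three} is also unaffected by adding a constant to $h$. So I will work with $\mathbb{Z}$-homomorphisms up to an additive shift, and check that the proof of \Cref{lem:three} never uses $h(v_0)=0$. Indeed, it only uses that $\frac12 h|_{L_{d-1}}$ is an integer-valued legal labeling that is max-good and min-good. It then draws conclusions about $h|_{L_d}$ from these facts together with \Cref{lem:isoperimetry}.

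Concretely, given $h$ satisfying \eqref{eq:typical-max-min-good-2}, define $h' := h\circ\tau + 1$ on $L_{d-1}\cup L_d$. Since $\tau$ preserves adjacency, $h'$ is a $\mathbb{Z}$-homomorphism of $Q_{2d-1}^{\md}$. Moreover $\tau$ maps $L_{d-1}$ onto $L_d$, so
\[
\tfrac12 h'|_{L_{d-1}} = \tfrac12\bigl(h|_{L_d}+1\bigr)\circ\tau|_{L_{d-1}}.
\]
Next I will verify that being a maximum or minimum component, $2$-linkedness, $|K|$, and $|[K]|$ are all preserved under $\tau$. This holds because $\tau$ is a graph automorphism swapping the parts, so it maps $N$, $N^2$ and $[\cdot]$ in $L_d$ to the corresponding objects in $L_{d-1}$. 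It also preserves $|L_d|=|L_{d-1}|=\binom{2d-1}{d}$, so the thresholds $d/2$ and $(1-c)\binom{2d-1}{d}$ match. Hence $\frac12 h'|_{L_{d-1}}$ is a max-good and min-good legal labeling of $L_{d-1}$, and it is integer-valued because $h$ takes odd values on $L_d$.

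Finally, I will apply the argument of \Cref{lem:three} to $h'$, shifted by a constant if one insists on a rooted homomorphism. This shows that $h'|_{L_d}$ takes at most three values. But $h'|_{L_d} = h|_{L_{d-1}}\circ\tau + 1$, so $h|_{L_{d-1}}$ also takes at most three values.

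The main point requiring care, rather than a genuine obstacle, is the bookkeeping of the shift by $+1$ and the centering. The $+1$ is needed so that $\frac12(h|_{L_d}+1)$ is integer-valued; this matches the normalization in \Cref{cor:max-min-good}. The centering condition $h(v_0)=0$ plays no role in the proof of \Cref{lem:three}: the notions of maximum and minimum component and the enlarged components $K^{(+)}$ are all translation invariant. So it suffices to remark that \Cref{lem:three} holds verbatim for unrooted homomorphisms.
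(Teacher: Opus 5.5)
Your proposal is correct and takes the same route as the paper, which deduces this lemma from \Cref{lem:three} via the layer-swapping automorphism $\tau$ of \Cref{obs:symmetry}. You have made explicit the bookkeeping the paper leaves implicit: the $+1$ shift that fixes parity, and the recentering at $v_0$. Both are harmless because max-/min-goodness and the number of values taken are invariant under adding constants.
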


Combining \Cref{lem:three} and \Cref{lem:three-2}, we get that if $h$ satisfies \Cref{cor:max-min-good}, then  it takes at most six values.

Observe that $|\Hom_{v_0}(Q_{2d-1}^{\md})|\geq 2^N$ which is the number of homomorphisms taking values only in $\{-1, 0, 1\}$,  where $N := |L_{d-1}| = |L_d| = \binom{2d-1}{d-1}$.  To conclude \Cref{thm:Z-hom-middle-layers}, it remains to show that among all $h \in \Hom_{v_0}(Q_{2d-1}^{\md})$ that satisfy \Cref{cor:max-min-good} (and therefore take at most six values), only $o(|\Hom_{v_0}(Q_{2d-1}^{\md})|)$  of them take exactly six values.

\begin{lem}\label{lem:six-to-five}
    The number of $h \in \Hom_{v_0}(Q_{2d-1}^{\md})$ that satisfy \Cref{cor:max-min-good} and take exactly six values is $o(2^N)$.
\end{lem}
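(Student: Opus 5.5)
The plan is to show that a six-valued $h$ satisfying \Cref{cor:max-min-good} must contain a very costly local defect, and to charge this defect with a merge-type weight comparison. Up to a global shift (at most $6$ choices, fixed by $h(v_0)=0$), the image of $h$ is $\{0,1,\dots,5\}$. By parity, one side carries $\{0,2,4\}$ and the other $\{1,3,5\}$. Using \Cref{obs:symmetry} I assume $h(L_{d-1})=\{0,2,4\}$ and $h(L_d)=\{1,3,5\}$; the other case is handled symmetrically.

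\emph{Step 1 (bulk structure).} I would rerun the two-case argument in the proof of \Cref{lem:three} and \Cref{lem:three-2}, now using all four good-ness conditions. Take $w\in L_d$ with $h(w)=5$. Then $N(w)\subseteq h^{-1}(4)$, so the maximum component of $\frac12 h|_{L_{d-1}}$ at the top value containing $N(w)$ has size at least $d\ge d/2$. Max-goodness then forces its closure to have size at least $(1-c)N$, so at least $(1-c)N$ vertices of $L_d$ lie in $\{3,5\}$. Applying the same reasoning to a vertex of $L_{d-1}$ at value $0$, via the min side of the $L_d$-labeling, gives at least $(1-c)N$ vertices of $L_{d-1}$ in $\{0,2\}$. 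Combining these with the conclusions of \Cref{lem:three} and \Cref{lem:three-2} (at least $(1-c)N$ in $\{1,3\}$, respectively in $\{2,4\}$), I expect to get at least $(1-2c)N$ vertices of $L_{d-1}$ at value $2$ and at least $(1-2c)N$ vertices of $L_d$ at value $3$. So $h$ is close to the doubly rigid state $(2,3)$, with two \emph{nonempty} defect sets: $D:=h^{-1}(5)\cap L_d$ and $A:=h^{-1}(0)\cap L_{d-1}$.

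\emph{Step 2 (cost of the defects).} Write $U_4:=h^{-1}(4)\cap L_{d-1}\supseteq N(D)$ and $U_0:=h^{-1}(0)\cap L_{d-1}$. I would parametrize $h$ by $U_0$, $U_4$ and the values on $L_d$. A vertex of $L_d$ is free to take two values only if it has no neighbor in $U_0\cup U_4$; a vertex of $D$ must lie in $B(U_4)$; and $N(U_0)$ is forced to value $1$. This yields a bound of the form
\[
\#\{h\}\le 2^N\sum_{U_0,U_4}2^{-|N(U_0)|-|N(U_4)|+|B(U_4)|},
\]
where the sum is restricted to $U_0\ne\emptyset$ and $B(U_4)\neq\emptyset$, and where $[\,\cdot\,]$ of every $2$-linked component has size at most $2cN$ by Step 1. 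I then split $U_4$ into $2$-linked components. A component $K$ with $B(K)\ne\emptyset$ contains some $N(w)$, so $|K|\ge d$. By \Cref{lem:isoperimetry}(b), or (a) for larger $K$, this gives $|N(K)|-|K|=\Omega(d^2)$. \Cref{cor:H-middle-layers} then bounds the total contribution of such $K$ by $N\cdot 2^{-\Omega(d^2/\log d)}=o(1)$. The same computation as in the proof of \Cref{lem:almost-all-good} applies here: the factor $N\le 4^d$ for the choice of root is absorbed by $2^{-\Omega(d^2/\log d)}$.

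\emph{Main obstacle.} The difficulty is the remaining factor coming from the components of $U_0$ and from the components of $U_4$ with $B=\emptyset$. These are small defects, e.g.\ singletons, each costing only about $2^{-d}$. There are about $N\gg 2^d$ possible locations, so the naive product $\prod(1+N2^{-\Omega(d)})$ is not $O(1)$. My first attempt would be to avoid paying for these defects at all. Concretely, I would define an injection-type map that only removes the costly component: lower $D$ to $3$, and lower the $U_4$-component containing $N(D)$ back to $2$. This is a merge as in \Cref{obs:merge-preserve-labeling}, applied to a maximum component of the $L_{d-1}$-labeling restricted near $D$. The aim is to compare six-valued $h$ directly with five-valued $h'$ having the same small defects. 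This would give $\#\{\text{six-valued }h\}\le o(1)\cdot|\Hom_{v_0}(Q^{\md}_{2d-1})|$, which is what the theorem needs.

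If the statement with $2^N$ is required literally, I would additionally have to show that the small defects cannot coexist with the required nonempty $A$ and $D$ at a total weight exceeding $2^N$. I expect this comparison between $|\Hom_{v_0}|$ and $2^N$ to be the genuinely delicate point, and it is the step I am least sure of.
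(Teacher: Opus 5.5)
\textbf{There is a genuine gap: Step~2 does not prove the lemma.} Your Step~1 is sound. It reaches the same doubly rigid picture as the paper's Case~3: at least $(1-2c)N$ vertices of $L_{d-1}$ sit at $2$, and at least $(1-2c)N$ vertices of $L_d$ sit at $3$. The problems are in Step~2.

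First, you invoke \Cref{cor:H-middle-layers} for the component $K\supseteq N(w)$ of $U_4$. That corollary requires $|[K]|\le(1-c)\binom{2d-1}{d}$, but Step~1 shows the opposite, $|[K]|\ge(1-c)N$. What Step~1 bounds by $2cN$ is $|K|\le|U_4|$, not $|[K]|$. So the factor $N2^{-\Omega(d^2/\log d)}$ is not available.

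Second, and more fundamentally, the weight $2^{N-|N(U_0)|-|N(U_4)|+|B(U_4)|}$ treats every vertex of $L_d$ outside $N(U_0\cup U_4)$ as free. That discards the rigidity of $L_d$ you have just proved. As long as the costly component is charged only $2^{-\Theta(d^2)}$, the small defects contribute a factor $e^{\Omega(N2^{-d})}$ with $N2^{-d}\to\infty$ (indeed $|\Hom_{v_0}(Q_{2d-1}^{\md})|\ge 2^{N-1}e^{N2^{-d-1}}$). So no bound of this shape can be $o(2^N)$; this is the obstacle you ran into. Your fallback, merging down to a five-valued $h'$, would at best give $o(|\Hom_{v_0}(Q_{2d-1}^{\md})|)$. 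That is weaker than the stated $o(2^N)$, and it is only sketched.

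\textbf{The missing idea is that the rigidity itself is a penalty exponential in $N$.} No comparison with five-valued homomorphisms is needed; the paper simply counts. In your notation:
\begin{itemize}
\item Every $2$-linked component of $U_0$ or of $D$ has size $<d/2$. Otherwise goodness would give it a neighborhood of size $\ge(1-c)N$ at value $1$ (resp.\ $4$), contradicting the bulk.
\item These components have pairwise disjoint neighborhoods, so \Cref{lem:isoperimetry}(b) gives $|U_0|,|D|\le 8N/d$.
\item Meanwhile $|U_4|$ and $|h^{-1}(1)\cap L_d|$ are at most $2cN$.
\end{itemize}
Since $h$ is determined by these four sets, an entropy bound finishes. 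In your weight language, the point is $|N(K)|\ge|[K]|\ge(1-c)N$. Hence the free factor $2^{N-|N(U_0)|-|N(U_4)|}$ in your bound is at most $2^{cN}$, rather than something to be compared with $|\Hom_{v_0}|$.

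\textbf{A caution if you follow the paper verbatim.} Its final product $2^{2(H(2c)+H(O(1/d)))N}$ is below $2^N$ only when $2H(2c)<1$, which fails for $c=1/10$ (since $2H(1/5)\approx 1.44$). There are two fixes:
\begin{itemize}
\item Take $c$ smaller; the other uses of $c$ only need a fixed constant below $1/3$.
\item Or use $h^{-1}(1)\cap L_d\subseteq L_d\setminus N(K)$, which leaves at most $2^{cN}$ choices for that set. Together with at most $2^{H(2c)N}$ choices of $U_4$ and $2^{o(N)}$ choices of $U_0$ and $D$, this gives $2^{(H(2c)+c+o(1))N}\le 2^{0.83N}$ for $c=1/10$.
\end{itemize}
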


\begin{proof}
Without loss of generality, we may assume that $h|_{L_{d-1}}$ takes values $\{-2,0,2\}$, and $h|_{L_{d}}$ takes values $\{-3, -1, 1\}$ (precisely three values for each layer). Let $K_1, K_2, \ldots$ be the maximum components in $L_{d-1}$ with $h|_{K_i}=2$ for each $i$, and let $K'_1, K'_2, \ldots$ be the minimum components in $L_{d-1}$ with $h|_{K'_j}=-2$ for each $j$.

We consider three cases:

\noindent \textit{Case 1:} $|K'_j|<d/2$ for all $j$.

This is impossible; in this case we cannot have $h(v)=-3$ for any $v\in L_d$, meaning $h$ takes at most five values.

\noindent \textit{Case 2:} $|[K_{i_0}]|, |[K'_{j_0}]| \ge (1-c)\binom{2d-1}{d}$ for some $i_0$ and $j_0$.

This is also impossible; indeed, it would imply 
$|N(K_{i_0})\cap N(K'_{j_0})|\geq |N(K_{i_0})|+|N(K'_{j_0})|-\binom{2d-1}{d}\geq |[K_{i_0}]|+ |[K'_{j_0}]|-\binom{2d-1}{d}>0$. 
However, any vertex $v\in N(K_{i_0})\cap N(K'_{j_0})$ must satisfy both $h(v)=1$ and $h(v)\in\{-3,-1\}$, a contradiction.

\noindent \textit{Case 3:} $|K_i|<d/2$ for all $i$, and $|[K'_{j_0}]|\ge (1-c)\binom{2d-1}{d}$ for some $j_0$.

In this case, we perform a similar case analysis for $L_d$: let $R_1, R_2, \ldots$ be the minimum components in $L_d$ with $h|_{R_i}=-3$ and $R'_1, R'_2, \ldots$ be the maximum components in $L_d$ with $h|_{R'_j}=1$. By the same logic as in Cases 1 and 2, we must have
\beq{eq:Case 3} \text{$|R_i|<d/2$ for all $i$, and $|[R'_{j_1}]|\ge (1-c)\binom{2d-1}{d}$ for some $j_1$.}\enq

We give an upper bound on the number of $h$ satisfying the conditions of Case 3 and \eqref{eq:Case 3}. First observe that the 2-linked components of $h^{-1}(2)$ are precisely the $K_i$'s, and similarly the 2-linked components of $h^{-1}(-3)$ are precisely the $R_i$'s. Since these sets all have size $<d/2$, \Cref{lem:isoperimetry} implies
\beq{eq:h-1}
|h^{-1}(2)|, |h^{-1}(-3)| = O(1/d)\binom{2d-1}{d}.
\enq
On the other hand, since $N(R'_{j_1})\subseteq  h^{-1}(2)\cup h^{-1}(0)$ with $(1-c)\binom{2d-1}{d}\leq |[R'_{j_1}]|\leq |N(R'_{j_1})|$, we have
\beq{eq:h-2}
|h^{-1}(0)|\ge |N(R'_{j_1})|-|h^{-1}(2)|\geq (1-2c)\binom{2d-1}{d}.
\enq
Similarly, since $N(K'_{j_0})\subseteq  h^{-1}(-1)\cup h^{-1}(-3)$ with $(1-c)\binom{2d-1}{d}\leq |[K'_{j_0}]|\leq |N(K'_{j_0})|$,
\beq{eq:h-3}
|h^{-1}(-1)|\geq |N(K'_{j_0})|-|h^{-1}(-3)|\ge (1-2c)\binom{2d-1}{d}.
\enq
Therefore, using the entropy bound, the number of such homomorphisms $h$ is at most 
\[
\left(\sum_{i=0}^{2c N}\binom{N}{i}\right)^2\left(\sum_{i=0}^{O(N/d)}\binom{N}{i}\right)^2\leq 2^{2(H(2c)+H(O(1/d)))N}=2^{(1-\Omega(1))N} = o(2^N).  \qedhere
\]
\end{proof}

We are now ready to conclude the proof of \Cref{thm:Z-hom-middle-layers}.

\begin{proof}[Proof of \Cref{thm:Z-hom-middle-layers}]
By \Cref{cor:max-min-good}, \Cref{lem:three}, and \Cref{lem:three-2} we established that a typical $h\in \Hom_{v_0}(Q_{2d-1}^{\md})$ satisfies \Cref{cor:max-min-good} and thus takes at most six values. Furthermore, since $|\Hom_{v_0}(Q_{2d-1}^{\md})|\geq 2^N$, \Cref{lem:six-to-five} guarantees that the proportion of $h\in \Hom_{v_0}(Q_{2d-1}^{\md})$ that satisfies \Cref{cor:max-min-good} and takes exactly six values is negligible ($o_d(1)$). Therefore, with probability $1-o_d(1)$, a random $\ZZ$-homomorphism takes at most five values, yielding $R(\mathbf{h}) \le 5$. 
\end{proof}

\section*{Acknowledgement}

JP was supported by NSF Grant DMS-2324978, NSF CAREER Grant DMS-2443706 and a Sloan Fellowship.

\bibliographystyle{abbrv}

\appendix

\section{Proof of \Cref{prop:approx-quad-spectral-expander}}\label{app:1}

\begin{lem*}[\Cref{prop:approx-quad-spectral-expander}]
Let $\Gamma$ be a bipartite $d$-regular graph with vertex bipartition $V(\Gamma)=\cE\cup \cO$.
Fix $b\le h\le a'\le g$ and $v\in \cO$. Then there exists a family
\[
\mathcal U_{\mathrm{quad}}(a',g,b,h,v)\subseteq 2^{\cO}\times 2^{\cE}\times 2^{\cE}\times 2^{\cO}
\]
with
\[
|\mathcal U_{\mathrm{quad}}(a',g,b,h,v)|
\le
2^{O(g\log d/\psi)},
\]
such that every $A\in \mathcal H(a',g,b,h,v)$ has a $\psi$-approximating quadruple in $\mathcal U_{\mathrm{quad}}(a',g,b,h,v)$.
\end{lem*}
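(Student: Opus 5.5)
The plan follows Galvin's two-stage container construction (Sapozhenko-style), run separately for the pair $(F,S)$ approximating $(G,[A])$ and for the pair $(P,Q)$ approximating $(H,B)$. Since only $|G|=g$ (not $t_1$) is targeted, no isoperimetry is needed: $d$-regularity, \Cref{lem:cover} and \Cref{lem:linked-set-count} suffice.

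\textbf{Stage 1: a small core $F'$ of $G$.} Fix $A\in\mathcal H(a',g,b,h,v)$. Choose $T_0\subseteq A$ by including each vertex independently with probability $p=\Theta(\log d/(d\psi))$, say $p=C\log d/(d\psi)$ for a large constant $C$. Let $T_0'=\{y\in G: d_{A\setminus N(T_0)}(y)\ge \psi\}$ and cover $T_0'$ by a set $T''\subseteq A$ of size $O(|T_0'|\log d/\psi)$ using \Cref{lem:cover}. Put $T=T_0\cup T''$ and $F'=N(T)$.

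\begin{itemize}
\item In expectation, $|T_0|\le pa\le pg=O(g\log d/(d\psi))$.
\item A vertex $y\in G$ with $d_A(y)\ge\psi$ lands in $T_0'$ only with probability about $(1-p)^{\psi}$. Hence $|T_0'|$ is small, and with the choice of $p$ the contribution $O(|T_0'|\log d/\psi)$ is also $O(g/(d\psi))$-sized up to logs.
\item Consequently $|T|=O(g\log d/(d\psi))$ for some outcome.
\item Every $y\in G\setminus F'$ satisfies $d_A(y)<\psi$.
\end{itemize}

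Because $A$ is $2$-linked and $v\in G$, the set $T\cup\{v\}$ is $O(1)$-linked at distance $\le 4$ after adding $O(|T|)$ connecting vertices from $A$. By \Cref{lem:linked-set-count} (with $\Delta=d$ and $k=4$), the number of choices of $T$ is therefore $d^{O(|T|)}=2^{O(g\log d/\psi)}$. This is the step where $v$ is used: it gives a fixed root.

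\textbf{Stage 2: from $F'$ to $(F,S)$.} Given $F'$, define $S'=\{x\in\cE: d_{F'}(x)\ge d-\psi\}$ and $\Omega=\{y\in\cO\setminus F': d_{\cE\setminus S'}(y)<d-\psi\}$ (the ``bad'' set). Then set $F=F'\cup(\Omega\cap G)$ and $S=\{x: d_F(x)\ge d-\psi\}\cup([A]\cap\cdots)$ following Galvin's recipe.

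\textbf{Main obstacle.} The bottleneck is Stage 2. One has to show that $|\Omega|$, or the part of it that must be specified, is $O(g/\psi)$, so that choosing $\Omega\cap G$ and the corrections inside $S$ costs only $\binom{\le O(g)}{O(g/\psi)}\le 2^{O(g\log d/\psi)}$. I would do this by an edge-counting argument. Vertices $x\in[A]\setminus S'$ each have more than $\psi$ edges into $G\setminus F'$, while each vertex of $G\setminus F'$ has fewer than $\psi$ neighbours in $A$. Double counting $E([A]\setminus S',G\setminus F')$ against $E(G\setminus F',\cE)$ bounds these exceptional sets by $O(g/\psi)$ times small factors, which is exactly as in Galvin's Lemma for the cube with the cube-specific step removed. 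The cost of this is that we get only $g$, not $t_1$, in the exponent.

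\textbf{Stage 3: the pair $(P,Q)$.} Run the same two-stage argument for $(P,Q)$ approximating $(H,B)$, with the roles of $\cE$ and $\cO$ swapped: $B$ plays the role of $A$, and $H=N(B)\subseteq A$. The number of choices is $2^{O(h\log d/\psi)}\le 2^{O(g\log d/\psi)}$, where linkage is handled by specifying $B$'s structure through its $2$-linked components, which are anchored inside $A\subseteq N(G)$. If that anchoring is delicate, a fallback is to pay a factor $2^{O(g\log d/\psi)}$ for choosing roots among the $O(g/\psi)$ relevant components.

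Finally, $\mathcal U_{\mathrm{quad}}$ is taken to be the product of the two resulting families, which has size $2^{O(g\log d/\psi)}$ as required.
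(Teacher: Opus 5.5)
There is a genuine gap, and it is already in Stage 1, before the obstacle you flag.

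\textbf{Stage 1 does not work as stated.} As written, $T_0'$ is ill-typed ($A\subseteq\cE$ but $N(T_0)\subseteq\cO$); presumably you mean $\{y\in G\setminus N(T_0): d_A(y)\ge\psi\}$. With $p=C\log d/(d\psi)$ you have $p\psi=C\log d/d\to 0$. So such a $y$ avoids $N(T_0)$ with probability close to $1$, and $T_0'$ is essentially all of $\{y\in G: d_A(y)\ge\psi\}$, not a small set.

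The Lov\'asz--Stein step is also misapplied. Covering $T_0'$ from $A$ costs $O(|N(T_0')\cap A|\log d/\psi)$, not $O(|T_0'|\log d/\psi)$; if the $A$-neighbourhoods are disjoint you need $|T''|\ge|T_0'|$.

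More fundamentally, no such $T$ can exist. You have $|N(T)|\le d|T|=O(g\log d/\psi)=o(g)$ for $\psi=\sqrt d$. But $\sum_{y\in G}d_A(y)=ad$ forces at least $a-\psi g/d$ vertices $y\in G$ with $d_A(y)\ge\psi$, and this is $\Theta(g)$ whenever $g=O(a)$, which is exactly the weak-expansion regime.

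The natural repair ($p\approx\log d/\psi$, or Lov\'asz--Stein over all of $A$) gives $|T|=O(g\log d/\psi)$. Enumerating $T$ as a linked set then costs $2^{O(g\log^2 d/\psi)}$, a $\log d$ factor more than claimed. Also, a sparse sample of $A$ does not become $O(1)$-linked ``by adding $O(|T|)$ connecting vertices'' without an argument.

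\textbf{Stage 2 is not supplied.} Your Stage 1 target is an $\cO$-side condition ($d_A(y)<\psi$ off $F'$), which does not yield condition (2) for $x\in[A]$. The double count you sketch only gives $|[A]\setminus S'|\le|G\setminus F'|$, not $O(g/\psi)$.

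\textbf{The missing idea is to separate the two scales.}
\begin{enumerate}
\item The paper first builds a coarse \emph{linked} object at density $\ln d/d$. A $p$-random $Y\subseteq G$ with $p=\ln d/d$, plus one $G$-neighbour for each of the $O(g/d)$ uncovered vertices of $[A]$, is a mutual cover $V$ of $[A]$ of size $O(g\log d/d)$. Being a mutual cover of a $2$-linked set, it is automatically $4$-linked, and it lies within distance $2$ of $v$. This gives only $2^{O(g\log^2 d/d)}$ choices.
\item All $\psi$-level corrections are then made greedily. The number of steps is bounded by a potential, and each step is chosen from a \emph{known} ground set of size $O(dg)$, so no linkage is needed. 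Starting from $F'=V$: while some $x\in[A]$ has $d_{G\setminus F'}(x)>\psi$, add $N(x)$ to $F'$. Each step adds more than $\psi$ new vertices of $G$, so there are at most $g/\psi$ steps, each with $x\in N(V)$. This costs $\binom{dg}{\le g/\psi}=2^{O(g\log d/\psi)}$.
\item Then $S'=\{x: d_{F'}(x)\ge d-\psi\}\supseteq[A]$ satisfies $|S'|\le 2g$ by edge counting. While some $w\in\cO\setminus G$ has $d_{S'}(w)>\psi$, delete $N(w)$ from $S'$. This takes at most $2g/\psi$ steps with $w\in N(S')$ and $|N(S')|\le 2dg$. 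Finally set $F=F'\cup\{y: d_S(y)>\psi\}$.
\end{enumerate}

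\textbf{Stage 3.} Your worry about the linkage of $B$ is legitimate: the paper simply roots at some $w\in N(v_2)$ with $v_2\in F\supseteq B$. Your component-rooting fallback does work. Distinct $2$-linked components of $B$ have disjoint neighbourhoods of size at least $d$ inside $A$, so there are at most $a/d$ of them.
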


 Throughout this section, let $\Gamma$ be a $d$-regular graph with vertex bipartition $V(\Gamma)=\cE \cup \cO$. We first introduce the notion of a mutual cover.

\begin{mydef}
    For $X\subseteq \cE$ and $Y\subseteq \cO$, we say that $Y$ \textit{mutually covers} $X$ if $X\subseteq N(Y)$ and $Y\subseteq N(X)$.
\end{mydef}

We further define
\begin{align*}
    \mathcal G(a',g,v)
&:=
\bigl\{
A\subseteq \cE:\ 
A\text{ is $2$-linked},\ |[A]|=a',\ |G|=g,\ v\in G
\bigr\}.
\end{align*}

\begin{lem}\label{lem:phi-approx-spectral}
Fix $a' \le g$ and $v \in \mathcal{O}$, and let $\mathcal{G} = \mathcal{G}(a', g, v)$. Suppose $g \ge d$. There is a family $\mathcal{V} = \mathcal{V}(a', g, v) \subseteq 2^{\mathcal{O}}$ with$$|\mathcal{V}| \le 2^{ O\left( \frac{g \log^2 d}{d} \right) }$$such that for every $A \in \mathcal{G}$, $[A]$ has a mutual cover in $\mathcal{V}$.
\end{lem}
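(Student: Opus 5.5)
We follow the standard Sapozhenko--Galvin construction of a $\varphi$-approximation, with $\varphi = d/2$ and the threshold $\log d$ governing the random sampling. Fix $A\in\mathcal G$ and write $G=N(A)=N([A])$.

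\emph{Step 1: a small set covering most of $[A]$.} Choose a random subset $T_0\subseteq G$ by including each vertex of $G$ independently with probability $p=\log d/d$. Each $x\in[A]$ has all $d$ of its neighbours in $G$, so $\EE|T_0|=gp$. Let $\Omega=\{x\in[A]: d_{T_0}(x)=0\}$; then $\EE|N(\Omega)|\le d\cdot a'(1-p)^d\le g\,d\,e^{-\log d}\le g/d^{\Omega(1)}$, since $a'\le g$. Markov's inequality gives a choice of $T_0$ with
\[
|T_0|=O\Bigl(\frac{g\log d}{d}\Bigr),\qquad |N(\Omega)|=O\Bigl(\frac{g}{d}\Bigr).
\]

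Next, cover $\Omega$ by a set $T_1\subseteq N(\Omega)$. By \Cref{lem:cover} applied to the bipartite graph between $\Omega$ and $N(\Omega)$ (every $x\in\Omega$ has degree $d$, every $y$ has degree $\le d$), we can take
\[
|T_1|\le \frac{|N(\Omega)|}{d}(1+\ln d)=O\Bigl(\frac{g\log d}{d^2}\Bigr).
\]
Put $T=T_0\cup T_1\subseteq G$. Then every $x\in[A]$ has a neighbour in $T$, so $T$ covers $[A]$ and $|T|=O(g\log d/d)$.

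\emph{Step 2: the set $T$ is $O(1)$-linked, so it has few choices.} Since $A$ is $2$-linked, $[A]\supseteq A$ is $2$-linked, and hence $G=N([A])$ is $2$-linked in $\cO$. Every vertex of $T$ lies within distance $1$ of $[A]$, and every vertex of $[A]$ lies within distance $1$ of $T$. It follows that $T\cup\{v\}$ is $4$-linked, because consecutive vertices of $[A]$ along a $2$-linked path have nearby $T$-vertices within distance $1+2+1=4$, and $v\in G$ is likewise within distance $2$ of some vertex of $T$. By \Cref{lem:linked-set-count}, the number of choices of $T$ containing, or linked to, the fixed vertex $v$ is at most
\[
(e d^4)^{|T|}=2^{O(g\log^2 d/d)},
\]
after summing over the possible sizes, which contribute at most a factor $g$ that is absorbed into this bound. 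This linking step is exactly why we need $v$ and the $2$-linked hypothesis.

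\emph{Step 3: from $T$ to a mutual cover.} Set $Y=N(N(T))\cap$ ``something determined by $A$''? The cleaner route is to take $Y:=T$ itself. Then $[A]\subseteq N(T)$ because $T$ covers $[A]$, and $T\subseteq G=N([A])$ because we sampled inside $G$. So $T$ is a mutual cover of $[A]$, and we let $\mathcal V$ be the family of all $4$-linked sets $T\subseteq\cO$ within distance $2$ of $v$ of size $O(g\log d/d)$. The count in Step 2 gives $|\mathcal V|\le 2^{O(g\log^2 d/d)}$.

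The assumption $g\ge d$ is used only to absorb the constant and polynomial factors, namely the ${}+1$ from the size sum and the root choice, into the exponent. The main delicate point is verifying the linkedness of $T\cup\{v\}$ in Step 2; everything else is the routine expectation calculation.
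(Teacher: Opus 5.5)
Your proof is correct and is essentially the paper's argument: sample $G$ at rate $p\approx\log d/d$, patch the uncovered part of $[A]$ inside $G$ (you use \Cref{lem:cover} where the paper picks one neighbour per uncovered vertex), and count the resulting $4$-linked mutual cover via \Cref{lem:linked-set-count}; your rooting through $\dist(v,T)\le 2$ is, if anything, cleaner than the paper's ``$g$ choices for a starting vertex.'' The one slip is that $\EE|N(\Omega)|\le gd\,e^{-\log d}$ does not give $|N(\Omega)|=O(g/d)$. This is harmless, because $|N(\Omega)|\le|G|=g$ already yields $|T_1|\le\frac{g}{d}(1+\ln d)=O(g\log d/d)$; indeed, applying \Cref{lem:cover} directly to $[A]$ versus $G$ would make the random step unnecessary.
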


\begin{proof} We construct a small mutual cover of $[A]$ in $G$ using a randomized procedure. Include each vertex of $G$ into a random subset $Y$ independently with probability $p = \frac{\ln d}{d}$. For any $x \in [A]$, the probability that $x$ has no neighbors in $Y$ is $(1-p)^d \le e^{-pd} = \frac{1}{d}$. The expected size of $Y$ is $\frac{\ln d}{d}g$, and the expected number of vertices in $[A]$ not covered by $Y$ is at most $a'/d \le g/d$. By Markov's inequality, there exists a specific subset $Y \subseteq G$ such that:

(1) $|Y| \le \frac{2 \ln d}{d}g$, and 

(2) the set of uncovered vertices $U = [A] \setminus N(Y)$ satisfies $|U| \le \frac{2}{d}g$. 

We can complete the cover by selecting exactly one neighbor in $G$ for each $u \in U$, forming a set $Y'$. Let $V = Y \cup Y'$. Then $V \subseteq G$, $V$ is a mutual cover of $[A]$, and$$|V| \le \frac{2 \ln d + 2}{d} g = O\left( \frac{\log d}{d} g \right).$$Since $A$ is 2-linked, $[A]$ is also 2-linked, and any mutual cover of a 2-linked set is 4-linked.

Recall that the number of 4-linked subsets of $\mathcal{O}$ of size $m$ that contains a fixed vertex $v_0$ is at most $O((ed^4)^{m-1})$ (\Cref{lem:linked-set-count}).  Since there are $g$ choices for a starting vertex in $G$, the number of choices for $V$ is at most
\[
g\cdot O\left((ed^4)^{O\left( \frac{\log d}{d} g \right)}\right)=2^{O\left( \frac{\log^2 d}{d} g \right)}. \qedhere
\]
\end{proof}

We now introduce the notion of a $\psi$-approximating pair. 

\begin{mydef}\label{def:psi-approx}
    Let $A\subseteq \cE$. A \textit{$\psi$-approximating pair} of $A$ is a pair of sets $(F,S)\in 2^{\cO}\times 2^{\cE}$ such that
        \begin{enumerate}[(i)]
        \item $F\subseteq G$, $S\supseteq [A]$;
        \item for all $x\in S$, $d_F(x)\geq d-\psi$;
        \item for all $y\in \cO\setminus F$, $d_{\cE\setminus S}(y)\geq d-\psi$.
    \end{enumerate}
\end{mydef}

Note that a $\psi$-approximating quadruple of $A$ (\Cref{def:psi-approx-quad}) 
is precisely a quadruple $(F,S,P,Q)$ such that $(F,S)$ is a $\psi$-approximating 
pair of $A$, and $(P,Q)$ is a $\psi$-approximating pair of $B=B(A)$.

\begin{lem}\label{lem:psi-approx-spectral}
Fix $a' \le g$ and $v \in \mathcal{O}$, and let $\mathcal{G} = \mathcal{G}(a', g, v)$. Suppose $g \ge d$. Let $\mathcal{V}$ be as in Lemma \ref{lem:phi-approx-spectral}. For every $V \in \mathcal{V}$, there is a family $\mathcal{W} = \mathcal{W}(V) \subseteq 2^{\mathcal{O}} \times 2^{\mathcal{E}}$ with
$$|\mathcal{W}| \le 2^{O\left( \frac{g \log d}{\psi} \right) }$$
such that for every $A \in \mathcal{G}$, if $V$ is a mutual cover of $[A]$, then $A$ has a $\psi$-approximating pair in $\mathcal{W}$.
\end{lem}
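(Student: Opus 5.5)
The plan is to build the pair $(F,S)$ in two rounds, following the Sapozhenko--Galvin scheme. In the first round I locate, up to error $\psi$, the vertices of $\cO$ that have few neighbours outside $[A]$. In the second round I correct the error on the $\cE$ side. Throughout, $V$ is fixed with $[A]\subseteq N(V)$ and $V\subseteq G$; by the proof of \Cref{lem:phi-approx-spectral}, $|V|=O(g\log d/d)$. Hence every auxiliary set is confined to a ground set of size $O(g\log d)$:
\[
[A]\subseteq N(V),\qquad G=N([A])\subseteq N^2(V),\qquad |N(V)|\le d|V|,\qquad |N^2(V)|\le d^2|V|.
\]
The target count $2^{O(g\log d/\psi)}$ then follows if each round records a subset of this ground set of size $O(g/\psi)$, with \emph{no} extra $\log d$ in the size. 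This holds because
\[
\binom{O(g\log d)}{\le O(g/\psi)}\le \bigl(O(\psi\log d)\bigr)^{O(g/\psi)}=2^{O(g\log d/\psi)},
\]
using $\psi\le d$.

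\textbf{First round.} Let $F'':=\{y\in\cO: d_{[A]}(y)\ge d-\psi\}\subseteq G$. Every $y\in G\setminus F''$ has more than $\psi$ neighbours in $\cE\setminus[A]$. Counting edges between $G$ and $\cE\setminus[A]$ gives exactly $d(g-a')=dt_1$, so $|G\setminus F''|\le dt_1/\psi$. To encode $F''$ from $V$, I take $F''$ to be the set of $y\in N^2(V)$ all but at most $\psi$ of whose neighbours lie in $N(V)\setminus T$. Here $T$ is a recorded set of vertices of $N(V)\setminus[A]$ that witnesses the defect.

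I choose $T$ by \Cref{lem:cover}, applied to the bipartite graph between $G\setminus F''$ and its neighbourhood in $\cE\setminus[A]$. In this graph the left degrees are $>\psi$ and the right degrees are at most $d$, so a cover of the right type exists. The resulting $F$ satisfies $F\subseteq G$, since every $y\in F$ has a neighbour in $[A]$ when $\psi<d$. It also satisfies condition (iii) of \Cref{def:psi-approx}, since the complement of $F$ has at least $d-\psi$ neighbours outside $S$ once $S$ is chosen below.

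\textbf{Second round.} Set $S:=\{x\in\cE: d_F(x)\ge d-\psi\}$. Every $x\in[A]$ has all $d$ neighbours in $G$. So the only failures of $S\supseteq[A]$ come from $x\in[A]$ having more than $\psi$ neighbours in $G\setminus F$. I repair these by recording a second set $T'\subseteq G\setminus F$, again chosen via \Cref{lem:cover} with degree threshold $\psi$. I then put those $x$ with $d_{F\cup T'}(x)\ge d-\psi$ into $S$ and shrink $F$ correspondingly. Conditions (i)--(iii) of \Cref{def:psi-approx} are then checked directly from the thresholds.

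\textbf{Main obstacle.} The hard part is keeping $|T|,|T'|=O(g/\psi)$ rather than $O(g\log d/\psi)$. A naive application of \Cref{lem:cover} gives size $\frac{(\text{ground set})}{\psi}(1+\ln d)$. That would cost $2^{O(g\log^2 d/\psi)}$ overall, which is not the statement. My first attempt will exploit that the sets being covered have size $O(dt_1/\psi)$, and that edge counting bounds the relevant ground set by $O(t_1)\le O(g)$ rather than $O(dg)$. If the $\ln d$ factor still survives, I will replace the Lovász--Stein cover by a random $1/\psi$-density sample. I will accept only a constant-factor loss in the degree threshold, replacing $\psi$ by $\psi/2$ via Chernoff. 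This works for vertices of defect $\ge\psi\ge\sqrt d\gg\log d$, and the leftover uncovered vertices are added individually at cost $O(g/d)$.
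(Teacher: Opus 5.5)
Your counting framework is sound: record $O(g/\psi)$ vertices from a ground set of size $O(g\log d)$ or $O(dg)$ that is determined by $V$. However, neither round actually produces a $\psi$-approximating pair. The obstacle you single out, the $\ln d$ loss in \Cref{lem:cover}, is not the real problem.

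\textbf{First round.} Your membership test for $F$ is a threshold count of neighbours in $N(V)\setminus T$. A cover $T$ only gives each defective $y$ \emph{one} neighbour in $T$. Since $V\subseteq G$, the set $N(V)$ contains many vertices of $\cE\setminus[A]$, so $N(V)\setminus T\not\subseteq[A]$. This has two consequences:
\begin{itemize}
\item a defective $y$ is excluded only if $T$ contains most of $N(y)\cap(N(V)\setminus[A])$;
\item a vertex $y\in N^2(V)\setminus G$ whose neighbourhood lies inside $N(V)\setminus T$ gets put into $F$.
\end{itemize}
So $F\subseteq G$ (your ``every $y\in F$ has a neighbour in $[A]$'') is not guaranteed. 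Switching to an existence-type test does not help, because vertices of $F''$ may also have up to $\psi$ neighbours in $T$.

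\textbf{Second round.} The same defect affects $T'$: adding one $G\setminus F$-neighbour of a bad $x\in[A]$ does not raise $d_F(x)$ to $d-\psi$. Moreover, condition (iii) of \Cref{def:psi-approx} is never enforced. With $S=\{x:d_F(x)\ge d-\psi\}$, nothing prevents a vertex outside $F$ (e.g.\ outside $G$) from having more than $\psi$ neighbours in $S$. ``Shrinking $F$'' works against both (ii) and (iii).

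\textbf{Sampling fallback.} This does not rescue the argument. At density $1/\psi$, a vertex of defect about $\psi$ has $O(1)$ expected sampled neighbours, so Chernoff gives no concentration.

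\textbf{The missing idea} is greedy augmentation in which you record \emph{centres} and add or remove their entire neighbourhoods. This is what the paper does, and it also removes any $\log d$ loss.
\begin{itemize}
\item Start with $F'=V$. While some $x\in[A]$ has $d_{G\setminus F'}(x)>\psi$, record $x$ and add $N(x)\subseteq G$ to $F'$. Each step adds more than $\psi$ new vertices of $G$, so at most $g/\psi$ vertices of $N(V)$ are recorded. Afterwards $S':=\{x\in\cE:d_{F'}(x)\ge d-\psi\}$ contains $[A]$ and satisfies $|S'|\le 2g$ (for $\psi\le d/2$).
\item Then, while some $y\in\cO\setminus G$ has more than $\psi$ neighbours in the current $S\subseteq S'$, record $y$ and delete $N(y)$, which misses $[A]$. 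Each step deletes more than $\psi$ vertices of $S'$, so at most $2g/\psi$ vertices of $N(S')$ are recorded.
\item Finally set $F:=F'\cup\{y:d_S(y)>\psi\}$.
\end{itemize}
Now $F\subseteq G$ precisely because of the second step. Condition (ii) holds since $S\subseteq S'$ and $F\supseteq F'$, and (iii) holds by the definition of $F$. The two recorded sets determine $(F,S)$ given $V$. They come from ground sets of sizes $O(g\log d)$ and $O(dg)$ and have sizes $O(g/\psi)$, giving $2^{O(g\log d/\psi)}$ choices.
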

\begin{proof}Let $A \in \mathcal{G}$ and suppose $V \in \mathcal{V}$ is a mutual cover of $[A]$. Let $H \subseteq [A]$ be a minimum-size set such that $F' := V \cup N(H)$ satisfies $d_{G \setminus F'}(x) \le \psi$ for all $x \in [A]$. 

We can select $H$ greedily: successively pick vertices in $[A]$ violating the condition, each adding at least $\psi$ new vertices to $F' \subseteq G$. Thus, $|H| \le g/\psi$. 
Since $H \subseteq [A]$, the number of choices for $H$ is at most $\binom{a'}{g/\psi} \le 2^{O\left( \frac{g \log d}{\psi} \right)}$.

Let $S' := \{x \in \mathcal{E} : d_{F'}(x) \ge d - \psi\}$. By definition, $[A] \subseteq S'$. Because $e(S', G) \ge (d-\psi)|S'|$ and $e(S', G) \le d|G| = dg$, we have $|S'| \le \frac{d}{d-\psi}g \le 2g$.

Next, let $U \subseteq N(S') \setminus G$ be a minimum-size set such that $S := S' \setminus N(U)$ satisfies $d_S(y) \le \psi$ for all $y \in \mathcal{O} \setminus G$. Greedily constructing $U$ yields $|U| \le \frac{|S'|}{\psi}  \le \frac{2g}{\psi}$. Since $U \subseteq N(S')$ and $|N(S')| \le d|S'| \le 2dg$, the number of choices for $U$ is $\binom{2dg}{2g/\psi} \le 2^{ O\left( \frac{g \log d}{\psi} \right)}$.

Setting $F = F' \cup \{y \in \mathcal{O} : d_S(y) > \psi\}$ gives a valid $\psi$-approximating pair $(F,S)$. The choices for $(H, U)$ uniquely determine $(F,S)$, yielding the desired bound on $|\mathcal{W}|$.\end{proof}

Combining \Cref{lem:phi-approx-spectral} and \Cref{lem:psi-approx-spectral} immediately gives the following.

\begin{cor}\label{cor:psi-approx-spectral}

Fix $a' \le g$ and $v \in \mathcal{O}$, and let $\mathcal{G} = \mathcal{G}(a', g, v)$. Suppose $g \ge d$. Then there is a family $\mathcal{U} = \mathcal{U}(a', g, v) \subseteq 2^{\mathcal{O}} \times 2^{\mathcal{E}}$ of size$$  2^{ O\left( \frac{g \log d}{\psi} \right)}$$such that every $A \in \mathcal{G}$ has a $\psi$-approximating pair $(F, S) \in \mathcal{U}$.
\end{cor}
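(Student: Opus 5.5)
The statement is an immediate concatenation of the two preceding lemmas, so the plan is a two-stage enumeration. We define $\mathcal U(a',g,v)$ as the union over $V\in\mathcal V(a',g,v)$ of the families $\mathcal W(V)$.

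First, fix $A\in\mathcal G$. By \Cref{lem:phi-approx-spectral}, which applies since $g\ge d$, the set $[A]$ has a mutual cover $V\in\mathcal V$. For that $V$, \Cref{lem:psi-approx-spectral} gives that $A$ has a $\psi$-approximating pair in $\mathcal W(V)\subseteq\mathcal U$. This establishes the covering property.

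Next, we bound the size of the family:
\[
|\mathcal U|\le |\mathcal V|\cdot\max_V|\mathcal W(V)|\le 2^{O(g\log^2 d/d)}\cdot 2^{O(g\log d/\psi)}.
\]
It remains to absorb the first factor into the second. This uses $\psi\le d$, which is part of \Cref{def:psi-approx-quad}; we intend the choice $\psi=\sqrt d$. For $\psi\le d/\log d$ we get $g\log^2 d/d\le g\log d/\psi$, so the product is $2^{O(g\log d/\psi)}$.

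The only point needing care is the regime $\psi\in(d/\log d,d]$, where $\log^2 d/d$ may exceed $\log d/\psi$. Since the paper only ever uses $\psi=\sqrt d$, the simplest route is to note this restriction on $\psi$ and observe that it is the case needed. Alternatively, one could sharpen \Cref{lem:phi-approx-spectral} in that regime, but we do not expect that to be necessary. Beyond this, nothing is hard; the corollary is bookkeeping.
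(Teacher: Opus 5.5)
Your proposal is correct and matches the paper, which likewise obtains the corollary by taking $\mathcal U=\bigcup_{V\in\mathcal V}\mathcal W(V)$ and multiplying the bounds from \Cref{lem:phi-approx-spectral} and \Cref{lem:psi-approx-spectral}. Your observation that absorbing the $2^{O(g\log^2 d/d)}$ factor into $2^{O(g\log d/\psi)}$ requires $\psi\le d/\log d$ (which holds for the intended $\psi=\sqrt d$) makes explicit a point the paper leaves implicit.
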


Finally, we deduce \Cref{prop:approx-quad-spectral-expander} from \Cref{cor:psi-approx-spectral}.

\begin{proof}[Proof of \Cref{prop:approx-quad-spectral-expander}] Since $\Gamma$ is $d$-regular and bipartite, every previous definition and result for $A\subseteq \cE$ has an ``analogous'' statement for $B\subseteq\cO$. For $b\leq h$ and $w\in\cE$, let $\mathcal G(b,h,w)$ denote the collection of sets $B \subseteq \cO$ such that $B$ is $2$-linked, $|[B]|=b$, $|N(B)|=h$, and $w \in N(B)$;  let
$\mathcal U(b,h,w)\subseteq 2^{\cE}\times 2^{\cO}$ be as in the ``analogous'' statement of \Cref{cor:psi-approx-spectral}.
Then, we can construct $\cU_\qd(a',g,b,h,v)$ as follows:
\begin{itemize}
    \item Take $\mathcal U(a',g,v)$.
    \item For every $(F,S)\in \mathcal U(a',g,v)$, every $v_2\in F$, and $w\in N(v_2)$:
    \begin{itemize}
        \item Take $\mathcal U(b,h,w)$.
        \item For every $(P,Q)\in \mathcal U(b,h,w)$, add $(F,S,P,Q)$ to $\cU_\qd(a',g,b,h,v)$.
    \end{itemize}
\end{itemize}
Since $|F| \le |G| = g$ and $h \le g$, we obtain:
\[
|\cU_\qd(a',g,b,h,v)| \le 2^{O\left(\frac{g\log d}{\psi}\right)} \cdot 2^{O\left(\frac{h\log d}{\psi}\right)} \cdot gd = 2^{O\left(\frac{(g+h)\log d}{\psi}\right)} = 2^{O\left(\frac{g\log d}{\psi}\right)}.
\]

We now show that the above $\cU_\qd$ works. Fix  $A\in\cH(a',g,b,h,v)$. Since $A\in\mathcal G(a',g,v)$, there exists  $(F,S)\in \mathcal U(a',g,v)$  that is a $\psi$-approximating pair of $A$. Furthermore, since $B=[B]$, $B\subseteq F$ and $H=N(B)$, there exists $v_2\in F$ and $w\in N(v_2)$ such that $w\in H$. For that particular $w$, we have $B\in \mathcal G(b,h,w)$, so there exists   $(P,Q)\in \mathcal U(b,h,w)$ that is a $\psi$-approximating pair of $B$. This gives the desired quadruple $(F,S,P,Q)\in \cU_{\qd}$ that is a $\psi$-approximating quadruple of $A$.
\end{proof}

\section{Proof of \Cref{prop:approx-quad-middle-layers}}\label{app:2}

\begin{lem*}[\Cref{prop:approx-quad-middle-layers}]
Let $\Gamma=Q_{2d-1}^{\mathrm{mid}}$ with $\cE=L_{d-1}$ and $\cO=L_{d}$.
Fix $b\le h\le a'\le g$, and assume $g\ge d^2/100$ and $a'\leq (1-c)\binom{2d-1}{d}$.
Then there exists a family
\[
\mathcal U_{\mathrm{quad}}(a',g,b,h)\subseteq 2^{\cO}\times 2^{\cE}\times 2^{\cE}\times 2^{\cO}
\]
with
\[
|\mathcal U_{\mathrm{quad}}(a',g,b,h)|
\le
2^{O(t_1\log d/\psi)}\cdot 2^{O(t_2\log d/\psi)},
\]
such that every $A\in \mathcal H(a',g,b,h)$ has a $\psi$-approximating quadruple in $\mathcal U_{\mathrm{quad}}(a',g,b,h)$.
\end{lem*}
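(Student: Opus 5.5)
The plan is to follow the two-stage structure of \Cref{app:1}: first build a small family of mutual covers for $[A]$, then refine each cover into a $\psi$-approximating pair. The key difference from the spectral case is that every cost must be charged to $t_1=g-a'$ rather than to $g$. We then repeat the construction on the $B$-side, charging to $t_2=h-b$.

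\textbf{Step 1 (mutual covers charged to $t_1$).} Fix $A\in\mathcal H(a',g,b,h)$. In the middle layers, $|N(v)|=d$ for every $v$, and by \Cref{lem:isoperimetry}(a), with $a'\le(1-c)\binom{2d-1}{d}\le\binom{2d-1-c}{d}$, we get $t_1\ge (c/d)a'$, i.e.\ $t_1=\Omega(g/d)$. As in Galvin's argument, we will not cover all of $G$. Instead, choose a random $T\subseteq [A]$ with density $p=\Theta(\log d/d)$ and set $Y_1=N(T)\cap G=N(T)$.
\begin{itemize}
\item A vertex $y\in G$ with $d_{[A]}(y)\ge d/2$ fails to be covered with probability at most $d^{-\Omega(1)}$.
\item Vertices with $d_{[A]}(y)<d/2$ number at most $O(t_1)$, by double counting: $e([A],G)=da'$, so $\sum_{y\in G}(d-d_{[A]}(y))=dt_1$.
\end{itemize}
Adjoin the uncovered vertices and the low-degree vertices of $G$ directly, and pick one neighbor for each uncovered vertex of $[A]$. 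This gives a set $V\subseteq G$ mutually covering $[A]$. It is specified by $T$ of size $O(a'\log d/d)=O(t_1\log d)$ plus $O(t_1)$ extra vertices.

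That first count is too weak, so I would sharpen it, and I expect this to be the main obstacle. Replace $T$ by a cover of only the ``boundary'' part $\{x\in[A]: N^2(x)\not\subseteq [A]\}$, whose size is $O(t_1)$ by the same double counting. Describe $V$ by a $4$-linked set together with the $O(t_1)$ low-degree vertices. The $2$-linkedness of $A$ and \Cref{lem:linked-set-count} then give $2^{O(t_1\log^2 d/d)}\cdot 2^{O(t_1\log d)}$ choices. I would try to mimic Galvin's $\varphi$-approximation in which only $O(t_1)$ vertices are recorded explicitly, yielding $|\mathcal V|\le 2^{O(t_1\log^2 d/\sqrt d)}$ or better. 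No root vertex $v$ is needed, since the total number of starting points is at most $\binom{2d-1}{d}$, and this is absorbed because $t_1\ge c g/d\ge cd/100$, so that $\log\binom{2d-1}{d}=O(d)=O(t_1)\cdot\log d/\psi$ for $\psi=\sqrt d$. I will need to confirm that this absorption really holds in the regime $g\ge d^2/100$.

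\textbf{Step 2 (approximating pair).} Given $V$, run the greedy procedure of \Cref{lem:psi-approx-spectral}, but charge each greedy step to $t_1$. Each vertex $x\in[A]$ added to $H$ has at least $\psi$ neighbors in $G\setminus F'$. The relevant set $G\setminus F'$ lies within the $O(t_1)$-sized portion of $G$ not yet captured, once $V$ is chosen to contain all high-$[A]$-degree vertices. This gives $|H|=O(t_1/\psi)$. Symmetrically, $|U|=O(t_1/\psi)$, using $|S'\setminus[A]|=O(t_1)$; this follows from $e(S',G)\ge (d-\psi)|S'|$ compared with $dg=d(a'+t_1)$. Each choice ranges over a set of size $\mathrm{poly}(d)\cdot g$, giving $2^{O(t_1\log d/\psi)}$ options.

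\textbf{Step 3 (quadruple).} Apply the same two steps to $B$ in the role of $A$ (using \Cref{obs:symmetry}), with parameters $b,h$ and cost $2^{O(t_2\log d/\psi)}$. Then take the product family, as in the proof of \Cref{prop:approx-quad-spectral-expander}.
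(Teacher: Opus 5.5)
\textbf{Step 1 has a genuine gap.} This step is where the content of the lemma lies. Neither construction you write down reaches $2^{O(t_1\log d/\psi)}$, and ``mimic Galvin'' is not an argument.
\begin{itemize}
\item Sampling $T\subseteq[A]$ at density $\Theta(\log d/d)$ gives $|T|=\Theta(a'\log d/d)$. Since $t_1$ can be as small as $\Theta(a'/d)$, this costs $2^{\Theta(t_1\log^2 d)}$.
\item Recording $\Theta(t_1)$ vertices explicitly can only be bounded by $2^{O(t_1\log d)}$. That exponent is a factor $\psi=\sqrt d$ too large.
\item Your claim that $\{x\in[A]:N^2(x)\not\subseteq[A]\}$ has size $O(t_1)$ is false. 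For $A=\{x\in L_{d-1}:1\in x\}$ one has $[A]=A$ and $t_1=\binom{2d-2}{d-1}/d$. Yet every $x\in[A]$ lies on this boundary, witnessed by $(x\cup\{j\})\setminus\{1\}\notin[A]$. So the boundary has size $a'\approx d\,t_1$.
\end{itemize}

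\textbf{The missing idea} is \Cref{lem:phi-approx}.
\begin{itemize}
\item Sample $T_0\subseteq G$ on the $\cO$-side at the much sparser density $p=\Theta(\log d/d^2)$. Then $|T_0|=O(g\log d/d^2)=O(t_1\log d/d)$.
\item Capture high-degree vertices through second neighborhoods $N(N_{[A]}(T_0))$. This uses a structural fact of the middle layers: two vertices of $L_{d-1}$ have at most one common neighbor. Hence $|N(N_{[A]}(y))|>d^2/4$ whenever $d_{[A]}(y)>d/2$, and all but $O(g/d^{10})$ vertices of $G^{\phi}$ are captured.
\item The knowledge of $N_{[A]}(T_0)$ is paid for only through the $O(t_1dp)=O(t_1\log d/d)$ edges from $T_0$ to $\cE\setminus[A]$.
\item The rest of $[A]$ is covered via \Cref{lem:cover} by $O(t_1\log d/d)$ vertices, since $|G\setminus G^{\phi}|\le 2t_1$.
\item Linkedness then gives $2^{O(t_1\log^2 d/d)}$ choices.
\end{itemize}
Low-degree vertices of $G$ are never recorded; the greedy $\psi$-stage absorbs them.

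\textbf{Step 2} is essentially the paper's, with one fix. The bound $|S'\setminus[A]|=O(t_1)$ must come from each such vertex sending $\ge d-\psi$ edges into the $dt_1$ edges between $G$ and $\cE\setminus[A]$. Comparing with $dg$ only gives $O(\psi t_1)$ when $a'\approx dt_1$.

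\textbf{Step 3 is also incomplete.}
\begin{itemize}
\item Running the construction on $B$ needs the analogue of $g\ge d^2/100$, namely $h\ge d^2/100$, which is not assumed. That hypothesis is what gives $gp\ge 1$ and $t_1dp\ge d$ in \Cref{lem:phi-approx}, and what lets root choices be absorbed.
\item For small $h$ one can have $t_2\approx d$, so the $2^{\Theta(d)}$ root factor on the $B$-side cannot be charged to $t_2$.
\item The paper treats $h\le d^2/100$ separately in \Cref{cor:psi-approx-quad-2}. It takes $(P,Q)=(H,B)$ exactly, at cost $2^{O(h\log g)}$ given $S$, and charges this to $t_1$. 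This uses $h\le a'=O(g/d)$ with $t_1=\Omega(g)$ when $g\le d^6$, and $t_1=\Omega(d^5)$ when $g>d^6$.
\end{itemize}

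\textbf{Root absorption.} Your absorption of the $\binom{2d-1}{d}$ root choices via $t_1\ge cd/100$ also fails. It gives only $t_1\log d/\psi=\Omega(\sqrt d\log d)$, which does not dominate $\log\binom{2d-1}{d}=\Theta(d)$. You need $t_1=\Omega(d^2)$. This follows from \Cref{lem:isoperimetry}(b) when $a'\le d^6$, and from (a) otherwise.
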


The main ideas  are similar to those in \cite[Section 5]{galvin2019independent}. Throughout this section, let $\Gamma=Q_{2d-1}^{\md}$ with vertex bipartition $\cE=L_{d-1}$, $\cO=L_d$.

Let $\phi:=d/2$.
We first define the notion of $\phi$-approximation.

\begin{mydef}\label{def:phi-approx}
    For $A\subseteq \cE$, set $G^\phi=G^\phi(A):=\{y\in G:d_{[A]}(y)>\phi\}$. A \textit{$\phi$-approximation} of $A$ is a set $F'\subseteq\cO$ such that $G^\phi\subseteq F'\subseteq G$ and $N(F')\supseteq [A]$.
\end{mydef}

We will obtain results analogous to \Cref{lem:phi-approx-spectral} and \Cref{lem:psi-approx-spectral}, 
with slightly different proofs.
Let $c=1/10$.

\begin{lem}\label{lem:phi-approx}
    Fix $a'\leq g$, $v\in\cO$ and let $\cG=\cG(a',g,v)$. Suppose $g\geq d^2/100$ and $a'\leq (1-c)\binom{2d-1}{d}$. Then there is a family $\cV=\cV(a',g,v)\subseteq 2^{\cO}$ with
    \[
    |\cV|\leq\begin{cases}
        2^{O(\frac{g\log^2d}{\phi d})+O(\frac{t_1\log^2d}{\phi})} & t_1=O(\frac{g(d-\phi)}{\phi d})\\
        2^{O(\frac{t_1\log^2d}{d-\phi})+O(\frac{t_1\log^2d}{\phi})} & t_1=\Omega(\frac{g(d-\phi)}{\phi d})
    \end{cases},
    \]
    such that every $A\in\cG$ has a $\phi$-approximation in $\cV$.
\end{lem}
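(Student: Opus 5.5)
We follow Galvin's construction of $\phi$-approximations (as in \cite[Section 5]{galvin2019independent}), adapted to $Q_{2d-1}^{\md}$ via \Cref{lem:isoperimetry}. Fix $A\in\cG$ and work with $[A]$, which is 2-linked, has $N([A])=G$, and satisfies $|[A]|=a'$. The construction has two stages. First, find a small set $T_0\subseteq G$ whose neighbourhood captures most of $[A]$, using a random subset as in \Cref{lem:phi-approx-spectral}. Then add a small correction to obtain $F'$ with $G^\phi\subseteq F'\subseteq G$ and $N(F')\supseteq[A]$.

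The first step is to bound the number of edges between $[A]$ and $G\setminus G^\phi$. Every $y\in G\setminus G^\phi$ has $d_{[A]}(y)\le\phi$, hence at least $d-\phi$ neighbours in $\cE\setminus[A]$. Counting edges,
\[
d\,a'=e([A],G)\le \phi |G\setminus G^\phi| + d|G^\phi|,
\]
and $|G\setminus G^\phi|(d-\phi)\le e(G,\cE\setminus[A])=dg-da'=dt_1$. This gives $|G\setminus G^\phi|\le dt_1/(d-\phi)=O(t_1)$ for $\phi=d/2$.

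Next, choose a random $T\subseteq G^\phi$ with each vertex included independently with probability $p=\Theta(\log d/\phi)$. Every $x\in[A]$ whose neighbours mostly lie in $G^\phi$ is then covered except with probability $\le 1/d$. Markov's inequality gives a $T$ of size $O(g\log d/\phi)$, together with a set of $O(g/d+t_1)$ uncovered vertices of $[A]$, which we patch with one neighbour each, as in \Cref{lem:phi-approx-spectral}. The patched set $T'$ is 4-linked because $[A]$ is 2-linked. \Cref{lem:linked-set-count} then bounds the choices of $T'$ by $2^{O(|T'|\log d)}$, and we sum over the $g$ starting vertices.

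Having $T'$, define
\[
\Omega:=\{y\in\cO: d_{N(T')}(y)>\phi\}\supseteq G^\phi.
\]
This contains $G^\phi$, but it may contain vertices outside $G$. These extra vertices have many neighbours in $N(T')\setminus[A]$, and $|N(T')\setminus[A]|$ is controlled by $t_1$-type quantities, so the discrepancy $\Omega\setminus G$ has size $O(t_1\cdot d/\phi)$. We specify $\Omega\cap G$ by naming the $O(t_1)$ extra vertices inside a $d^{2}$-neighbourhood structure, at cost $2^{O(t_1\log d/\phi\cdot\log d)}$. Finally, take $F'$ to be this set together with the $O(t_1)$ vertices of $G\setminus G^\phi$ needed to cover the rest of $[A]$; each of these is chosen in $N^3$ of already-fixed sets, at cost $\log d$ bits.

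The two regimes in the statement come from comparing $g/d$ with $t_1$. When $t_1=O(g(d-\phi)/(\phi d))$, the random-cover cost $g\log^2 d/(\phi d)$ dominates. Otherwise one runs the random cover only over the at most $O(t_1 d/(d-\phi))$ "bad'' vertices, giving $t_1\log^2 d/(d-\phi)$. In the middle layers we have $g\ge d^2/100$ and $a'\le(1-c)\binom{2d-1}{d}$, so \Cref{lem:isoperimetry}(a) gives $t_1\ge cg/d$. Thus the second regime is the relevant one, and the bound is $2^{O(t_1\log^2 d/d)}$.

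The main obstacle is the step showing that $\Omega\setminus G$, and the number of $x\in[A]$ poorly covered by $G^\phi$, are both $O(t_1)$ rather than $O(g/d)$. This requires the edge-counting with the threshold $\phi=d/2$ to be sharp enough that every error term is charged to $t_1$. I would first try the double-counting of edges between $G$, $\cE\setminus[A]$ and $[A]$ given above, modelled on Galvin's Lemma 5.3.
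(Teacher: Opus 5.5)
Your edge count $|G\setminus G^\phi|\le dt_1/(d-\phi)$ is correct, and so is your remark that $t_1=\Omega(g/d)$ puts you in the second regime. But there is a genuine gap: the construction built on these facts does not reach the claimed bound, because the exponent is off by roughly a factor $d$.

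A $p$-random $T\subseteq G^\phi$ with $p=\Theta(\log d/\phi)=\Theta(\log d/d)$ has size $\Theta(g\log d/d)$. By \Cref{lem:linked-set-count} it therefore costs $2^{O(g\log^2 d/d)}$, not $2^{O(g\log^2 d/(\phi d))}$. Patching $O(t_1)$ uncovered vertices one neighbour at a time costs $2^{O(t_1\log d)}$, and so does adding $O(t_1)$ vertices of $G\setminus G^\phi$ at the end at $O(\log d)$ bits each. Since \Cref{lem:isoperimetry}(a) only gives $t_1=\Omega(g/d)$, $g$ can be $\Theta(dt_1)$. Your total is then only $2^{O(t_1\log^2 d)}$, against the target $2^{O(t_1\log^2 d/d)}$.

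This loss is not cosmetic. \Cref{lem:reconstruction} saves only $2^{-\Omega(t_1/\log d)}$, which is why this lemma needs a bound sharper than \Cref{lem:phi-approx-spectral}, whose proof you are essentially reproducing. The $\Omega$ step does not repair this. Specifying $O(t_1)$ extra vertices at $O(\log d)$ bits each costs $2^{O(t_1\log d)}$, and nothing in your argument brings this down to the claimed $2^{O(t_1\log^2 d/\phi)}$. Likewise, ``running the random cover only over the bad vertices'' in the second regime leaves unexplained how the bulk $G^\phi$ is then specified cheaply.

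The missing idea is to capture $G^\phi$ through second neighbourhoods inside $[A]$, rather than covering $[A]$ directly. Two vertices of $L_{d-1}$ have at most one common neighbour. Hence for $y\in G^\phi$ one has $|N(N_{[A]}(y))|=1+d_{[A]}(y)(d-1)>\phi d/2$, and also $N(N_{[A]}(y))\subseteq G$.

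So take a $p$-random $T_0\subseteq G$ with $p=20\ln d/(\phi d)$, a factor $d$ sparser than yours. It has $|T_0|=O(g\log d/(\phi d))$ and $e(T_0,\cE\setminus[A])=O(t_1dp)=O(t_1\log d/\phi)$. It misses only $T_0':=G^\phi\setminus N(N_{[A]}(T_0))$, which has size $O(g/d^{10})$.

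Given $T_0$, the set $N_{[A]}(T_0)=N(T_0)\setminus N_{\cE\setminus[A]}(T_0)$ is determined by the small set $N_{\cE\setminus[A]}(T_0)$, at cost $2^{O(t_1\log^2 d/\phi)}$. Moreover $N(N_{[A]}(T_0))\subseteq G$ automatically, so no correction outside $G$ is needed.

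Finally, put $L=N(N_{[A]}(T_0))\cup T_0'$. Every $x\in[A]\setminus N(L)$ has all $d$ neighbours in $G\setminus L$, and $|G\setminus L|\le dt_1/(d-\phi)$. So \Cref{lem:cover} gives a cover $T_1$ of size $O(t_1\log d/(d-\phi))$, not $O(t_1)$. Then $F'=L\cup T_1$, and $T_0\cup T_0'\cup T_1$ is $8$-linked of size $O(g\log d/(\phi d)+g/d^{10}+t_1\log d/(d-\phi))$. This yields exactly the two regimes of the statement.
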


Note that the above upper bound on $|\cV|$ is stronger than that in \Cref{lem:phi-approx-spectral} and is needed in the subsequent proof. Consequently, the proof of \Cref{lem:phi-approx} is also slightly more involved.

\begin{proof}
    Fix $A\in\cG(a',g,v)$ and set $p=\frac{20 \ln d}{\phi d}$.

\begin{claim}\label{claim:t0}
    There exists $T_0\subseteq G$ such that
    \begin{enumerate}
        \item $v\in G$,
        \item $|T_0|\leq 4gp$,
        \item $e(T_0,\cE\setminus[A])\leq 4t_1dp$,
        \item $|G^\phi\setminus N(N_{[A]}(T_0))|\leq \frac{3g}{d^{10}}$.
    \end{enumerate}
\end{claim}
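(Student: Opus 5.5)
We take $T_0$ to be a $p$-random subset of $G$ and show that it satisfies (2)--(4) simultaneously with positive probability; then we add $v$ to it. Let $T$ include each vertex of $G$ independently with probability $p=\frac{20\ln d}{\phi d}$, and set $T_0:=T\cup\{v\}$. Condition (1) then holds by construction; it should read $v\in T_0$, and adding $v$ costs at most $1$ in (2) and at most $d$ in (3), both absorbed by the constant $4$ because $gp\gg 1$ when $g\ge d^2/100$. It remains to check (2)--(4) for $T$.

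For (2): $\EE|T|=gp$, so Markov gives $\PP(|T|>4gp)\le 1/4$.

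For (3): write $e(T,\cE\setminus[A])=\sum_{y\in G} \mathbf 1_{y\in T}\, d_{\cE\setminus[A]}(y)$. Its expectation is $p\,e(G,\cE\setminus[A])$. Since $G=N([A])$ and $\Gamma$ is $d$-regular, counting edges gives $e(G,\cE\setminus[A])=dg-da'=dt_1$. Markov then gives $\PP\bigl(e(T,\cE\setminus[A])>4t_1dp\bigr)\le 1/4$.

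For (4): fix $y\in G^\phi$, so $y$ has more than $\phi$ neighbours in $[A]$. We want some $x\in N_{[A]}(y)$ to have a neighbour in $T$; then $x\in N_{[A]}(T)$ and $y\in N(N_{[A]}(T))$. The main obstacle is that the events ``$x$ has no neighbour in $T$'' for different $x$ are correlated, so we avoid them and work instead with second neighbourhoods. Every $x\in[A]$ satisfies $N(x)\subseteq G$, so
\[
|N_G(N_{[A]}(y))| = |N(N_{[A]}(y))| \ge \phi\, d /C
\]
for some constant $C$. In the middle layers this is elementary: the $>\phi=d/2$ neighbours $x$ of $y$ each have $d$ neighbours, and two distinct $x,x'$ share only $y$ (codegree at most $1$ between $x,x'$ at distance two apart from $y$; indeed two vertices of $L_{d-1}$ have at most one common neighbour). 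Hence $|N(N_{[A]}(y))|\ge \phi(d-1)$. Now $y\notin N(N_{[A]}(T))$ forces $T\cap N(N_{[A]}(y))=\emptyset$, which has probability
\[
(1-p)^{\phi(d-1)}\le e^{-20\ln d\,(d-1)/d}\le d^{-19}.
\]
So the expected number of bad $y$ is at most $g d^{-19}$. By Markov, $\PP\bigl(|G^\phi\setminus N(N_{[A]}(T))|>3g/d^{10}\bigr)\le d^{-9}/3$.

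A union bound over the three failure events gives total failure probability at most $1/4+1/4+o(1)<1$. Hence some $T$ satisfies (2)--(4), and $T_0=T\cup\{v\}$ is the required set. The only step that needs care is the lower bound on $|N(N_{[A]}(y))|$ in (4). If the codegree argument fails, we can fall back on \Cref{lem:isoperimetry}(b), applied to the set $N_{[A]}(y)$ of size $>d/2$, which gives $|N(N_{[A]}(y))|\ge d\phi/8$. This still yields failure probability $e^{-20\ln d/8}=d^{-2.5/\ln 2}\approx d^{-3.6}$, which is too weak for the bound $3g/d^{10}$. So in that case we would rely on the exact codegree bound, which does hold in $Q_{2d-1}^{\md}$.
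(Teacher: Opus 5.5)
Your argument is essentially the paper's. You take a $p$-random subset of $G$ and apply Markov to three quantities: $|T|$, $e(T,\cE\setminus[A])$ (using $e(G,\cE\setminus[A])=dt_1$), and the number of uncovered $y\in G^\phi$. For the last one you use the codegree-one property of $Q_{2d-1}^{\mathrm{mid}}$, and then you add $v$, which lies in $G$ because $A\in\cG(a',g,v)$. Your bound $|N(N_{[A]}(y))|\ge\phi(d-1)$ is sharper than the paper's $\phi d/2$, giving $d^{-19}$ in place of $d^{-10}$.

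The step that fails as written is adding $v$.

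You set the Markov thresholds at exactly $4gp$ and $4t_1dp$, so there is no slack. Then $T\cup\{v\}$ only satisfies $|T_0|\le 4gp+1$ and $e(T_0,\cE\setminus[A])\le 4t_1dp+d$.

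To fix this, use threshold $3$ instead, as the paper does. The failure probabilities are then at most $1/3$, $1/3$ and $d^{-9}/3$, which still sum to less than $1$. After that you need $gp\ge 1$ and $t_1dp\ge d$.

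The first follows from $g\ge d^2/100$, as you say. The second is a statement about $t_1$, not about $g$. It requires $t_1p\ge 1$, that is $t_1\gtrsim d^2/\ln d$, and this does not follow from $gp\gg 1$.

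The missing input is the isoperimetry the paper invokes at this point. By \Cref{lem:isoperimetry}, use part (b) if $a'\le d^6$. Otherwise use part (a) with $c=1/10$, which applies because $a'\le(1-c)\binom{2d-1}{d}\le\binom{2d-1-c}{d}$. Either way $t_1=g-a'=\Omega(d^2)$, so $t_1dp=\Omega(d\ln d)\ge d$.

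Incidentally, in your fallback remark $e^{-20\ln d/8}=d^{-5/2}$, not $d^{-2.5/\ln 2}$. This does not matter, since the codegree bound holds.
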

\begin{subproof}
    Let $S$ be a $p$-random subset of $G$. Then $\EE(|S|)=gp$. Since $e(G,\cE\setminus[A])=gd-a'd=t_1d$, we have $\EE (e(S,\cE\setminus[A]))=t_1dp$. 

    Fix any $y\in G^\phi$, so that $\phi<|N_{[A]}(y)|\leq d$. Write $X:=N_{[A]}(y)$.
    Observe that in the middle layers of the hypercube, every two vertices $x_1, x_2\in \cE$ share at most one neighbor in $\cO$. 
    Since every vertex in $X$ is already adjacent to $y$, no two vertices in $X$ can share any other neighbor besides $y$. 
    This implies that for all $u \in N(X) \setminus \{y\}$, we must have $\deg_X(u) = 1$. 
    By summing the degrees of vertices in $X$, we get $|X|d = |X| + (|N(X)| - 1)$, which yields
    \[
        |N(X)| = 1 + |X|(d-1).
    \]
    Since $|X| > \phi = d/2$, we easily obtain
    \[
        |N(X)| > 1 + \frac{d}{2}(d-1) = \frac{d^2 - d + 2}{2} > \frac{d^2}{4} = \frac{\phi d}{2}.
    \]
    Therefore, for every $y\in G^\phi$ we have $|N(N_{[A]}(y))| > \phi d/2$.

    Because $N(N_{[A]}(y)) \subseteq G$, each vertex in this neighborhood is included in $S$ independently with probability $p$. We have
    \begin{align*}
        \EE(|G^\phi\setminus N(N_{[A]}(S))|)&=\sum_{y\in G^\phi}\PP[y\notin N(N_{[A]}(S))]
        =\sum_{y\in G^\phi}\PP[N(N_{[A]}(y))\cap S=\emptyset]\\
        &\leq g(1-p)^{\frac{\phi d}{2 }}\leq ge^{-p \frac{\phi d}{2}} = \frac{g}{d^{10}}.
    \end{align*}
    Thus, by Markov's inequality, there exists a set $T_0^*\subseteq G$ with (i) $|T_0^*|\leq 3gp$, (ii) $e(T_0^*,\cE\setminus[A])\leq 3t_1dp$, and (iii) $|G^\phi\setminus N(N_{[A]}(T_0^*))|\leq 3g/d^{10}$.

    Since $p=\Omega(\log d/d^2)$ and $g\geq d^2/100$, we have $gp\geq 1$. Also by \Cref{lem:isoperimetry}, we have $t_1=\Omega(d^2)$ which gives $t_1dp =\Omega(d^2\cdot dp) \geq d$.

    Finally, take $T_0=T_0^*\cup\{v\}$. Since $gp\geq 1$ and $t_1dp\geq d$, we have  $|T_0|\leq 4gp$ and $e(T_0,\cE\setminus[A])\leq 4t_1dp$. Thus $T_0$ satisfies (1)--(4) of the claim statement.
\end{subproof}

We now build the $\phi$-approximation using the obtained set $T_0$. Set $T_0'=G^\phi\setminus N(N_{[A]}(T_0))$  
and $L=N(N_{[A]}(T_0))\cup T_0'$. Let $T_1\subseteq G\setminus L$ be a minimum cover (see \Cref{def:bipartite-cover}) of $[A]\setminus N(L)$ in $Q_{2d-1}$. Take $F'=L\cup T_1$. 

We first verify that $F'$ is a $\phi$-approximation (\Cref{def:phi-approx}) of $A$:
\begin{itemize}
    \item Clearly $G^\phi\subseteq L\subseteq F'$. 
    \item Also $T_0'\subseteq G^\phi$, $N(N_{[A]}(T_0))\subseteq G$, and $T_1\subseteq G$, so $F'\subseteq G$.
    \item Also $N(T_1)\supseteq [A]\setminus N(L)$, so $N(F')\supseteq [A]$.
\end{itemize}

We then make the following two observations:
\begin{itemize}
    \item $F'$ is 4-linked. This is because every vertex in $F'\subseteq G$ is at distance 1 from $A$, and every vertex in $A$ is at distance 1 from $F'$ (as $N(F')\supseteq [A]$), and $A$ is 2-linked.
    \item Let $T:=T_0\cup T_0'\cup T_1$. Since $F'=N(N_{[A]}(T_0))\cup T_0'\cup T_1$, we know that every $y\in T$ is at distance $\leq 2$ from $F'$, and every $y\in F'$ is at distance $\leq 2$ from $T$. Since $F'$ is 4-linked, we get that $T$ is 8-linked.
\end{itemize}

We know from \Cref{claim:t0} that $|T_0|= O(\frac{g\log d}{\phi d})$, $|T_0'|= O(\frac{g}{d^{10}})$, and $|E(T_0,\cE\setminus[A])|= O(t_1\log d/\phi)$.
We now upper bound $|T_1|$. Note that $|E(G,\cE\setminus [A])|=gd-a'd=t_1d$ and every vertex in $G\setminus L\subseteq G\setminus G^\phi$ contributes at least $d-\phi$ edges to this set, so we have $|G\setminus L|\leq\frac{t_1d}{d-\phi}$. Since
\begin{align*}
    &d_{G\setminus L}(v)=d\quad \text{ for every } v\in [A]\setminus N(L),\\
    &d_{[A]\setminus N(L)}(u)\leq d\quad \text{ for every } u\in G\setminus L,
\end{align*}
by \Cref{lem:cover}, we have $|T_1|\leq\frac{t_1}{d-\phi}(1+\ln d)=O(\frac{t_1\log d}{d-\phi})$.

Combining the above, we know that $T=T_0\cup T_0'\cup T_1$ is an 8-linked subset of $\cO$ of size $O(\frac{g\log d}{\phi d}+\frac{g}{d^{10}}+\frac{t_1\log d}{d-\phi})$. We now begin a case discussion.

\noindent \textbf{Case 1: $t_1=O(\frac{g(d-\phi)}{\phi d})$.} In this case, we have $|T|=O(\frac{g\log d}{\phi d})$. By \Cref{lem:linked-set-count}, there are $2^{O(\frac{g\log^2 d}{\phi d})}$ choices for $T$ and thus $2^{O(\frac{g\log^2 d}{\phi d})}$ choices for the partition $(T_0,T_0',T_1)$. 
Given $T_0$, since $|E(T_0,\cE\setminus[A])|\leq O(t_1dp)=O(t_1\log d/\phi)$ and $|N(T_0)| \le d|T_0| = O(\frac{g\log d}{\phi})$, the number of choices for $N_{\cE\setminus[A]}(T_0)$ is at most
\[
\sum_{i=0}^{t_1\log d/\phi}\binom{N(T_0)}{i}=2^{O\left(\frac{t_1\log^2 d}{\phi}\right)},
\]
where the equality uses the fact that $t_1 = \Omega(g/d)$ (by \Cref{lem:isoperimetry}), which implies $\log(|N(T_0)|/i) = O(\log d)$. Since specifying $N_{\cE\setminus[A]}(T_0)$ uniquely determines $N_{[A]}(T_0) = N(T_0)\setminus N_{\cE\setminus[A]}(T_0)$, there is the same number of choices for $N_{[A]}(T_0)$. Since $F'$ is completely determined by $T_0',T_1,$ and $N_{[A]}(T_0)$, we get that there are 
\[
2^{O\left(\frac{g\log^2 d}{\phi d}\right)}\cdot 2^{O\left(\frac{t_1\log^2 d}{\phi}\right)}
\]
choices for $F'$.

\noindent \textbf{Case 2: $t_1=\Omega(\frac{g(d-\phi)}{\phi d})$.} In this case, we have $|T|=O(\frac{t_1\log d}{d-\phi})$. By \Cref{lem:linked-set-count}, there are $2^{O(\frac{t_1\log^2 d}{d-\phi})}$ choices for $T$ and thus $2^{O(\frac{t_1\log^2 d}{d-\phi})}$ choices for the partition $(T_0,T_0',T_1)$. 
Following the exact same reasoning as in Case 1, given $T_0$, there are $2^{O(\frac{t_1\log^2 d}{\phi})}$ choices for $N_{[A]}(T_0)$. Since $F'$ is completely determined by $T_0',T_1,$ and $N_{[A]}(T_0)$, we get that there are 
\[
2^{O\left(\frac{t_1\log^2 d}{d-\phi}\right)}\cdot 2^{O\left(\frac{t_1\log^2 d}{\phi}\right)}
\]
choices for $F'$.
\end{proof}

\begin{lem}\label{lem:psi-approx}
Fix $a'\leq g$, $v\in\cO$ and let $\cG=\cG(a',g,v)$.
Suppose $g\geq d^2/100$ and $a'\leq (1-c)\binom{2d-1}{d}$.
Let $\cV$ be as in \Cref{lem:phi-approx}. For every $F'\in\mathcal V$, there is a family $\mathcal W=\mathcal W(F')\subseteq 2^{\cO}\times 2^\cE$ with
    \[
    |\mathcal W|\leq 2^{O\left(\frac{t_1d\log d}{(d-\phi)\psi}\right)+O\left(\frac{t_1d\log d}{(d-\psi)\psi}\right)},
    \]
    such that for every $A\in\mathcal G$, if $F'$ is a $\phi$-approximation of $A$, then $A$ has a $\psi$-approximating pair in $\mathcal W$. Furthermore, for every $(F,S)\in\cW$, we have $|S|\leq d^3g$.
\end{lem}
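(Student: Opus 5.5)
The plan is to refine the two greedy steps from the proof of \Cref{lem:psi-approx-spectral}, starting from the $\phi$-approximation $F'$ instead of a mutual cover $V$. The point is to charge every greedy choice to the ``boundary'' edges $E(G,\cE\setminus[A])$, of which there are exactly $t_1d$, rather than to $g$. Fix $F'\in\mathcal V$ and $A\in\mathcal G$ with $F'$ a $\phi$-approximation of $A$; recall $G^\phi\subseteq F'\subseteq G$.

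\textbf{Step 1 (building $F''$).} Every $y\in G\setminus F'$ lies outside $G^\phi$, so $d_{\cE\setminus[A]}(y)\ge d-\phi$. Counting boundary edges gives $|G\setminus F'|\le t_1d/(d-\phi)$. Greedily choose $H\subseteq[A]$: while some $x\in[A]$ has $d_{G\setminus F''}(x)>\psi$, add $x$ to $H$ and set $F'':=F'\cup N_G(H)$. Each step adds more than $\psi$ new vertices of $G\setminus F'$, so $|H|\le t_1d/((d-\phi)\psi)$. Since $[A]\subseteq N(F')$, each $x\in H$ lies in $N(F')$, which has size at most $d|F'|\le dg$. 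Hence $H$ is specified by at most $\binom{dg}{t_1d/((d-\phi)\psi)}$ choices. Because $t_1=\Omega(g/d)$ by \Cref{lem:isoperimetry}, we get $\log(dg/|H|)=O(\log d)$, so this gives the first factor $2^{O(t_1d\log d/((d-\phi)\psi))}$. Note that we must also allow $F''$ to be computed from $F'$ and $H$ alone; this holds if we use $N(H)$ in place of $N_G(H)$, since $H\subseteq[A]$ gives $N(H)\subseteq G$.

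\textbf{Step 2 (building $S$ and $F$).} Set $S':=\{x\in\cE:d_{F''}(x)\ge d-\psi\}\supseteq[A]$. Every $x\in S'\setminus[A]$ sends at least $d-\psi$ edges into $G$, and these edges go from $G$ to $\cE\setminus[A]$. So $|S'\setminus[A]|\le t_1d/(d-\psi)$. Also $|S'|\le dg/(d-\psi)\le 2g\le d^3g$, which gives the size claim on $S$. Now greedily pick $U\subseteq \cO\setminus G$: while some $y\in\cO\setminus G$ has $d_{S}(y)>\psi$, add $y$ to $U$ and set $S:=S'\setminus N(U)$. Since $N(y)\cap[A]=\emptyset$ for $y\notin G$, each step removes more than $\psi$ vertices of $S'\setminus[A]$. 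Thus $|U|\le t_1d/((d-\psi)\psi)$, and $[A]\subseteq S$ is preserved. Choosing $U$ inside $N(S')$, of size at most $2dg$, gives the second factor $2^{O(t_1d\log d/((d-\psi)\psi))}$, again using $t_1=\Omega(g/d)$.

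Finally set $F:=F''\cup\{y\in\cO:d_S(y)>\psi\}$ and verify \Cref{def:psi-approx}. Condition (i) holds since the added $y$ lie in $G$ by the stopping rule of Step 2, and $S\supseteq[A]$. Condition (ii) holds since $S\subseteq S'$. Condition (iii) holds since every $y\notin F$ has $d_S(y)\le\psi$. The pair $(F,S)$ is determined by $(F',H,U)$, which yields the bound on $|\mathcal W|$.

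The main obstacle is the bookkeeping in these counts: we must ensure the ground sets for $H$ and $U$ have size polynomial in $d$ times the number of selected elements. Otherwise the binomial factors produce $\log(g/t_1)$ in place of $\log d$. This is exactly where the isoperimetric lower bound $t_1=\Omega(g/d)$ from \Cref{lem:isoperimetry}(a) is used. The hypotheses $g\ge d^2/100$ and $a'\le(1-c)\binom{2d-1}{d}$ are what make that bound applicable.
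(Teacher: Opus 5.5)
Your proposal is correct and follows essentially the same route as the paper. It uses the same two greedy steps: add $N(u)$ for $u\in[A]$ with $d_{G\setminus F'}(u)>\psi$, then delete $N(w)$ for $w\in\cO\setminus G$ with $d_{S''}(w)>\psi$. The iterations are charged to the $t_1d$ boundary edges, and the binomial counts are controlled via $t_1=\Omega(g/d)$. Your direct bound $|S'|\le dg/(d-\psi)$ gives the size claim slightly more cleanly than the paper's $|S''|\le|N^3(F')|\le d^3g$.
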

\begin{proof}
Let $\cV=\cV(a',g,v)$ be as in \Cref{lem:phi-approx}. Fix $F'\in\cV$. Consider any $A\in\cG$ such that $F'$ is a $\phi$-approximation of $A$. We produce a $\psi$-approximation $(F,S)$ of $A$ by running the following algorithm.

\textbf{Step 1.} If there is $u\in[A]$ such that $d_{G\setminus F'}(u)>\psi$, pick the (lexicographically) smallest $u$ and update $F'$ by $F'=F'\cup N(u)$. Repeat until there is no such $u$. Then set $F''=F'$ and $S''=\{u\in \cE:d_{F''}(u)\geq d-\psi\}$. Go to Step 2. 

\textbf{Step 2.} If there is $w\in \cO\setminus G$ such that $d_{S''}(w)>\psi$, pick the smallest $w$ and update $S''$ by $S''=S''\setminus N(w)$. Repeat until there is no such $w$. Then set $S=S''$ and $F=F''\cup \{w\in \cO: d_{S}(w)>\psi\}$. Output $(F,S)$.

\begin{claim}\label{claim:psi-1}
    The obtained pair $(F,S)$ is a $\psi$-approximation of $A$ with $|S|\leq d^3g$.
\end{claim}
\begin{subproof}
    We check the following:
    \begin{itemize}
        \item $F\subseteq G$ and $S\supseteq[A]$. 
        
        At the end of Step 1, we have $d_{G\setminus F'}(u)\leq \psi$ for all $u\in[A]$, so $[A]\subseteq S''$. Since Step 2 only deletes vertices in $N(\cO\setminus G)$ (which are outside $[A]$) from $S''$, at the end of Step 2 we have $[A]\subseteq S$.

        It is clear that $F''\subseteq G$ at the end of Step 1. At the end of Step 2, we have $d_{S''}(w)\leq\psi$ for all $w\in\cO\setminus G$, so $\{w\in \cO: d_{S}(w)>\psi\}\subseteq G$ as well. Thus $F\subseteq G$. 
        
        \item $d_F(u)\geq d-\psi$ for all $u\in S$. This is because at the end of Step 1 we have $d_{F''}(u)\geq d-\psi$ for all $u\in S''$. At the end of Step 2, we have $S\subseteq S''$ and $F\supseteq F''$.
        
        \item $d_{\cE\setminus S}(w)\geq d-\psi$ for all $w\in \cO\setminus F$. At the end of Step 2, for all $w\in \cO\setminus F$ we have $d_{S}(w)\leq \psi$, which gives $d_{\cE\setminus S}(w)\geq d-\psi$. \qedhere
    \end{itemize}
\end{subproof}

\begin{claim}\label{claim:psi-iterations-bound}
    The above Step 1 has at most $\frac{t_1d}{(d-\phi)\psi}$ iterations, and Step 2 has at most $\frac{t_1d}{(d-\psi)\psi}$ iterations.
\end{claim}
\begin{subproof}
    Initially, we have $|G\setminus F'|\leq \frac{t_1d}{d-\phi}$; this is because every $v\in G\setminus F'\subseteq G\setminus G^\phi$ contributes at least $d-\phi$ edges to $E(\cE\setminus [A], G)$, which is a set of edges of size $dg-da'=t_1d$. Since each iteration in Step 1 reduces $|G\setminus F'|$ by at least $\psi$, there are at most $\frac{t_1d}{(d-\phi)\psi}$ iterations.

    At the beginning of Step 2, we have $|S''\setminus [A]|\leq\frac{t_1d}{d-\psi}$; this is because every $u\in S''\setminus [A]$ contributes at least $d-\psi$ edges to $E(\cE\setminus [A], G)$, a set of size $t_1d$. Since each iteration in Step 2 reduces $|S''\setminus [A]|$ by at least $\psi$, there are at most $\frac{t_1d}{(d-\psi)\psi}$ iterations.
\end{subproof}

\begin{claim}\label{claim:psi-2}
    The above algorithm has at most 
    \[
    2^{O\left(\frac{t_1d\log d}{(d-\phi)\psi}\right)+O\left(\frac{t_1d\log d}{(d-\psi)\psi}\right)}
    \]
    outputs as $A$ ranges over all sets in $\cG(a',g,v)$ that have $F'$ as a $\phi$-approximation.
\end{claim}
\begin{subproof}
Since $F'$ is fixed, the output of Step 1 is completely determined by the ordered sequence of $u$'s whose neighborhoods $N(u)$ are added to $F'$. Given the output of Step 1, the output of Step 2 is completely determined by the ordered sequence of $w$'s whose neighborhoods $N(w)$ are removed from $S''$.

By \Cref{claim:psi-iterations-bound}, the set of $u$'s is a subset of $[A]\subseteq N(F')$ of size at most $\frac{t_1d}{(d-\phi)\psi}$. Since $|N(F')|\leq|N(G)|\leq dg$, the number of possible outputs of Step 1 is at most
\[
\sum_{i=0}^{\frac{t_1d}{(d-\phi)\psi}}\binom{dg}{i}=2^{O\left(\frac{t_1d\log d}{(d-\phi)\psi}\right)},
\]
where the equality follows from $g/t_1 = O(d)$ (by \Cref{lem:isoperimetry}), which implies $\log(dg/i) = O(\log d)$. 

Fix any output $(F'',S'')$ of Step 1. Then the set of $w$'s is a subset of $N(S'')$ of size at most $\frac{t_1d}{(d-\psi)\psi}$. Since $S''\subseteq N(F'')$ where $F''\subseteq F'\cup N([A])\subseteq F'\cup N^2(F')$, we have $|S''|\leq |N^3(F')|\leq d^3g$. Thus, the number of possible outputs of Step 2 is at most
\[
\sum_{i=0}^{\frac{t_1d}{(d-\psi)\psi}}\binom{d^3g}{i}=2^{O\left(\frac{t_1d\log d}{(d-\psi)\psi}\right)},
\]
again using the fact that $\log(d^3g/i) = O(\log d)$. Furthermore, the final $S\subseteq S''$ satisfies $|S|\leq |S''|\leq d^3g$.
\end{subproof}

Thus, for every $F'\in\cV$, we can take $\cW(F')$ to be the set of possible outputs $(F,S)$ in the above algorithm.
\end{proof}

Combining \Cref{lem:phi-approx} and \Cref{lem:psi-approx} immediately gives the following. (Recall from \Cref{lem:isoperimetry} that $a'\leq (1-c)\binom{2d-1}{d}$ implies $t_1=g-a'=\Omega(g/d)$.) 

\begin{cor}\label{cor:psi-approx}
Fix $a'\leq g$, $v\in\cO$ and let $\cG=\cG(a',g,v)$. Suppose $g\geq d^2/100$ and $a'\leq (1-c)\binom{2d-1}{d}$.
    Then there is a family $\mathcal U=\cU(a',g,v)\subseteq 2^{\cO}\times 2^{\cE}$ of size
    \[
    2^{O\left(\frac{t_1\log d}{\psi}\right)}
    \]
    such that every $A\in\cG$ has a $\psi$-approximating pair $(F,S)$ in $\mathcal U$. Furthermore, for every $(F,S)\in\cU$, we have $|S|\leq d^3g$.
\end{cor}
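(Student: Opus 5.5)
The plan is to compose the two families directly, exactly as \Cref{cor:psi-approx-spectral} is obtained from \Cref{lem:phi-approx-spectral} and \Cref{lem:psi-approx-spectral}. Set
\[
\mathcal U(a',g,v):=\bigcup_{F'\in\cV(a',g,v)}\cW(F'),
\]
with $\cV$ from \Cref{lem:phi-approx} and $\cW(F')$ from \Cref{lem:psi-approx}. Correctness is immediate. Every $A\in\cG$ has some $\phi$-approximation $F'\in\cV$ by \Cref{lem:phi-approx}. Then \Cref{lem:psi-approx} gives a $\psi$-approximating pair $(F,S)\in\cW(F')$ of $A$. The bound $|S|\le d^3g$ is inherited verbatim from \Cref{lem:psi-approx}. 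Hence $|\mathcal U|\le |\cV|\cdot\max_{F'}|\cW(F')|$, and the only work is to check that each factor is $2^{O(t_1\log d/\psi)}$ with $\phi=d/2$ and (as intended) $\psi=\sqrt d$.

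For $\cW$, plug in $\phi=d/2$, so $d/(d-\phi)=2$. Since $\psi=\sqrt d\le d/2$, we also have $d/(d-\psi)\le 2$. Both exponents in \Cref{lem:psi-approx} are therefore $O(t_1\log d/\psi)$.

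For $\cV$, I would split according to the two cases of \Cref{lem:phi-approx}. With $\phi=d/2$, the threshold $g(d-\phi)/(\phi d)$ is of order $g/d$. The key input is the isoperimetric fact recalled before the statement: $a'\le(1-c)\binom{2d-1}{d}$ implies $t_1=\Omega(g/d)$, via \Cref{lem:isoperimetry}(a) with the estimate $(1-c)\binom{2d-1}{d}\le\binom{2d-1-c}{d}$. Consequently, in the first case $g/d=O(t_1)$, and the term $g\log^2 d/(\phi d)=O(g\log^2 d/d^2)$ becomes $O(t_1\log^2 d/d)$. Every remaining term is of the form $O(t_1\log^2 d/d)$ in both cases, since $\phi$ and $d-\phi$ are both $d/2$. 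Finally, $\log^2 d/d\le \log d/\sqrt d=\log d/\psi$ for large $d$, so $|\cV|\le 2^{O(t_1\log d/\psi)}$ in either case.

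Multiplying the two factors gives $|\mathcal U|\le 2^{O(t_1\log d/\psi)}$. The only mildly delicate point is the first case of \Cref{lem:phi-approx}, where a $g$-dependent term has to be converted into a $t_1$-dependent one. This is exactly where the hypotheses $a'\le(1-c)\binom{2d-1}{d}$ and $g\ge d^2/100$ enter, through $t_1=\Omega(g/d)$. It is also why the statement requires $\psi$ to be of order at most $d/\log d$; this holds for the choice $\psi=\sqrt d$. For general $\psi$, one would instead state the bound with exponent $O(t_1\log^2 d/d + t_1\log d/\psi)$.
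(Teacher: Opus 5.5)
Your proposal is correct and is essentially the paper's own argument. The paper obtains the corollary by composing $\cV$ from \Cref{lem:phi-approx} with $\cW(F')$ from \Cref{lem:psi-approx}, with $\phi=d/2$ and $\psi=\sqrt d$, and uses $t_1=\Omega(g/d)$ from \Cref{lem:isoperimetry}(a) to turn the $g$-dependent term into a $t_1$-dependent one. Your remark that this implicitly needs $\psi=O(d/\log d)$ and $\psi\le(1-\Omega(1))d$ is accurate, and the paper's fixed choice $\psi=\sqrt d$ satisfies both.
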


An almost direct implication of \Cref{cor:psi-approx} is the following.
\begin{cor}\label{cor:psi-approx-quad}
Fix $b\leq h\leq a'\leq g$. Suppose $g,h\geq d^2/100$ and $a'\leq (1-c)\binom{2d-1}{d}$.
Then there is a family $\mathcal U_{\qd}=\cU_\qd(a',g,b,h)\subseteq 2^{\cO}\times 2^{\cE}\times 2^{\cE}\times 2^{\cO}$ of size
    \[
    2^{O\left(\frac{t_1\log d}{\psi}\right)}\cdot 2^{O\left(\frac{t_2\log d}{\psi}\right)}
    \]
    such that every $A\in\cH(a',g,b,h)$ has a $\psi$-approximating quadruple in $\mathcal U_{\qd}$.
\end{cor}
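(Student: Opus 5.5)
The plan is to mirror the construction of $\cU_\qd(a',g,b,h,v)$ in the proof of \Cref{prop:approx-quad-spectral-expander}, but with \Cref{cor:psi-approx} in place of \Cref{cor:psi-approx-spectral}. We must also remove the dependence on the auxiliary vertex $v$, since $\cH(a',g,b,h)$ does not fix one.

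\textbf{Step 1: the pair $(F,S)$ for $A$.} For each $v\in\cO$, \Cref{cor:psi-approx} gives a family $\cU(a',g,v)$ of size $2^{O(t_1\log d/\psi)}$. However, $v$ ranges over all of $\cO$, which has size $\binom{2d-1}{d}$, and that is far too many choices to absorb into the bound. Instead, I would reuse the internal structure of the proof of \Cref{lem:phi-approx}: the set $T$ there is $8$-linked and contains $v$. I would simply drop the rooting, counting $8$-linked sets $T$ by choosing a starting vertex in $\cO$.

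This only costs a factor $|\cO|$. That factor must be absorbed by $2^{O(t_1\log d/\psi)}$, which requires $t_1\log d/\psi\gtrsim d$.

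\textbf{The main obstacle.} Rather than rooting globally, I would absorb the root choice into the overall counting where the family is used, namely in \Cref{cor:H-middle-layers}. There the final bound $2^{g-b-\Omega(t_1/\log d)}$ beats a factor $|\cO|=2^{O(d)}$ only if $t_1/\log d\gg d$. This holds since $t_1=\Omega(g/d)=\Omega(d)$ only marginally, so I expect this to be the delicate point.

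If that fails, I would instead state the family with the root $v$ included, exactly as in the spectral case. In the application in \Cref{lem:almost-all-good}, the sum over $2$-linked $K$ is then organized by a root. Alternatively, one can note that for $\psi=\sqrt d$ and $t_1=\Omega(d^2)$ (from \Cref{lem:isoperimetry} with $g\ge d^2/100$), we get $t_1\log d/\psi=\Omega(d^{3/2}\log d)\gg d\ge\log|\cO|$. This absorbs the factor $|\cO|$ outright, and it is the route I would commit to.

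\textbf{Step 2: the pair $(P,Q)$ for $B$.} By \Cref{obs:symmetry}, every statement for $A\subseteq\cE$ has an analogue for $B\subseteq\cO$. Given $(F,S)$, I pick $v_2\in F$ and $w\in N(v_2)$ (at most $gd$ choices) so that $w\in H$. This works because $B=[B]\subseteq F$ up to the approximation, exactly as argued in the proof of \Cref{prop:approx-quad-spectral-expander}.

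I then apply the analogue of \Cref{cor:psi-approx} to $B$, with parameters $(b,h,w)$. This is valid because $h\ge d^2/100$ and $b\le a'\le(1-c)\binom{2d-1}{d}$, and it yields $2^{O(t_2\log d/\psi)}$ choices.

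\textbf{Step 3: assembling the bound.} Multiplying the counts gives
\[
|\cU_\qd(a',g,b,h)|\le |\cO|\cdot 2^{O(t_1\log d/\psi)}\cdot gd\cdot 2^{O(t_2\log d/\psi)}.
\]
The polynomial factor $gd\le d\binom{2d-1}{d}=2^{O(d)}$ is absorbed as before, since $t_1\log d/\psi\gg d$. This gives the stated size.

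Correctness, namely that each $A$ gets a genuine $\psi$-approximating quadruple, is immediate from the pairwise guarantees. A $\psi$-approximating quadruple is precisely a $\psi$-approximating pair for $A$ together with one for $B$.
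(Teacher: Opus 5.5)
Your committed route is essentially the paper's proof. It takes the union over all roots of the families from \Cref{cor:psi-approx} for $A$ and of their analogues for $B$, and absorbs the $2^{O(d)}$ root overhead using $t_1=\Omega(d^2)$ (from $g\ge d^2/100$ and \Cref{lem:isoperimetry}), so that $t_1\log d/\psi\gg d$. Locating the $B$-root via $v_2\in F$, $w\in N(v_2)$ instead of the paper's union over all $v'\in\cE$ is a harmless variation at the same $2^{O(d)}$ cost (it is valid because condition (3) together with $[A]\subseteq S$ forces $B\subseteq F$); drop your exploratory claims that the $|\cO|$ root choices cannot be absorbed or that $t_1=\Omega(d)$ would suffice, since both are false.
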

\begin{proof}
Note that $g\geq h\geq d^2/100$ and $b\leq a'\leq (1-c)\binom{2d-1}{d}$. 

Due to the symmetry between $\cE$ and $\cO$ (\Cref{obs:symmetry}), every previous definition and result for $A\subseteq \cE=L_{d-1}$ has an ``analogous'' statement for $B\subseteq \cO=L_d$. For every $a'\leq g$ with $g\geq d^2/100$, $a'\leq (1-c)\binom{2d-1}{d}$ and $v\in \cO$, let $\cU(a',g,v)\subseteq 2^{\cO}\times 2^{\cE}$ be as in \Cref{cor:psi-approx}; for every $b\leq h$ with $h\geq d^2/100$, $b\leq (1-c)\binom{2d-1}{d}$ and $v'\in\cE$, let $\cU(b,h,v')\subseteq 2^{\cE}\times 2^{\cO}$ be as in the ``analogous'' statement of \Cref{cor:psi-approx}.
Let
\[
\cU_{\qd}=\cU_{\qd}(a',g,b,h):=\bigcup_{v\in \cO,\, v'\in\cE}\{(F,S,P,Q):(F,S)\in \cU(a',g,v),\, (P,Q)\in \cU(b,h,v')\}.
\]

We verify that the above $\cU_{\qd}$ satisfies the desired properties. Clearly we have 
\[
|\cU_{\qd}|\leq |\cO||\cE||\cU(a',g,v)||\cU(b,h,v')|\leq 2^{O(d)}\cdot 2^{O\left(\frac{t_1\log d}{\psi}\right)}\cdot 2^{O\left(\frac{t_2\log d}{\psi}\right)} = 2^{O\left(\frac{t_1\log d}{\psi}\right)}\cdot 2^{O\left(\frac{t_2\log d}{\psi}\right)}.
\]
Here in the last equality, we crucially use the assumption that \textit{both} $g,h\geq d^2/100$, which gives $t_1,t_2=\Omega(d^2)$ according to \Cref{lem:isoperimetry}(b) (thus absorbing the $2^{O(d)}$ factor).

Furthermore, for every $A\in\cH(a',g,b,h)$, there exist $v\in\cO$ and $v'\in\cE$ such that $A\in\cG(a',g,v)$ and $B\in\cG(b,h,v')$. By \Cref{cor:psi-approx}, there exists $(F,S)\in\cU(a',g,v)$ that is a $\psi$-approximating pair of $A$ (i.e., meets (i)--(iii) in \Cref{def:psi-approx-quad}), and there exists $(P,Q)\in\cU(b,h,v')$ that is a $\psi$-approximating pair of $B$ (i.e., meets (iv)--(vi) in \Cref{def:psi-approx-quad}). Thus the tuple $(F,S,P,Q)$ is a $\psi$-approximating quadruple of $A$ that lies in the constructed $\cU_{\qd}$.
\end{proof}

We further prove a statement similar to \Cref{cor:psi-approx-quad}, but valid for the complement range of $h$.

\begin{cor}\label{cor:psi-approx-quad-2}
    Fix $b\leq h\leq a'\leq g$. Suppose $g\geq d^2/100$, $h\leq d^2/100$ and $a'\leq (1-c)\binom{2d-1}{d}$. Then there is a family $\mathcal U_{\qd}=\cU_{\qd}(a',g,b,h)\subseteq 2^{\cO}\times 2^{\cE}\times 2^{\cE}\times 2^{\cO}$ of size
    \beq{eq:h_small}
    2^{O\left(\frac{t_1\log d}{\psi}\right)}
    \enq
    such that every $A\in\cH(a',g,b,h)$ has a $\psi$-approximating quadruple in $\mathcal U_{\qd}$.
\end{cor}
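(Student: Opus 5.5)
The plan is to reuse the construction of \Cref{cor:psi-approx-quad} for the pair $(F,S)$ and to handle the pair $(P,Q)$ for $B$ by brute force. When $h\le d^2/100$, the container machinery of \Cref{cor:psi-approx} (which needs $h\ge d^2/100$) is not available for $B$, but $H=N(B)$ is small, so $B$ itself can be enumerated directly at negligible cost.

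First, as before, for each $v\in\cO$ I take the family $\cU(a',g,v)$ from \Cref{cor:psi-approx}. It has size $2^{O(t_1\log d/\psi)}$, and every $A\in\cG(a',g,v)$ has a $\psi$-approximating pair $(F,S)$ in it, with $|S|\le d^3g$.

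Next, for the second pair I would simply take $(P,Q)=(H,B)$. Conditions (4)--(6) of \Cref{def:psi-approx-quad} then hold trivially: $P=H$ and $Q=B$; every $y\in B$ has $N(y)\subseteq A$, so $d_H(y)=d\ge d-\psi$; and every $x\in\cE\setminus H$ has no neighbor in $B$, so $d_{\cO\setminus B}(x)=d$. It remains to count the possible $B$. Since $B\subseteq F$ would not be guaranteed, I would instead use that $B\subseteq G$ and $|B|=b\le h\le d^2/100$, and decompose $B$ into its $2$-linked components. Each component $B_j$ is $2$-linked in $\cO$ and contains a vertex within distance $1$ of $S$, since $N(B_j)\subseteq A\subseteq S$. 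By \Cref{lem:linked-set-count}, once a root is fixed there are at most $(ed^2)^{|B_j|}$ choices for $B_j$. Roots are chosen from $N(S)$, which has size at most $d^4g$, and the number of components is at most $b$. Summing over the component sizes, the total number of choices for $B$ given $(F,S)$ is at most
\[
\binom{d^4g}{\le b}\,(ed^2)^{b}\,2^{b}=2^{O(b\log (dg))}.
\]
With $b\le d^2/100$ and $g\le\binom{2d-1}{d}$ this is $2^{O(d^3)}$, which is too crude. So I would instead root the components inside $N(N(S))\cap G$ and use a sharper bound: every component of $B$ lies within distance $2$ of $A$, and $A$ is $2$-linked. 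The cleaner alternative, which I plan to adopt, is to add $B$ to $A$'s linked structure: $B\cup A$ is linked with gaps of bounded length, so $B$ is determined by one root, taken from $N(F)$ with $|N(F)|\le dg$, together with an $O(1)$-linked set of size $b$ in a neighborhood of the already chosen $T$. This costs $\log(dg)+O(b\log d)=O(\log g+d^2\log d)$ bits.

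Finally, I absorb this cost. By \Cref{lem:isoperimetry}(b) and $g\ge d^2/100$ we have $t_1=\Omega(d^2)\cdot\Omega(1)$; more precisely $t_1=\Omega(g/d)$ and $t_1\ge \Omega(d^2)$ up to the relevant factor. With $\psi=\sqrt d$, the main family already has size $2^{O(t_1\log d/\psi)}$, and the extra factor $2^{O(d^2\log d)}$ has to be dominated by this. \textbf{This is the main obstacle}: $d^2\log d$ is not obviously $O(t_1\log d/\sqrt d)$ unless $t_1=\Omega(d^{2.5})$. To fix it, I would sharpen the estimate on $t_1$ in this regime. Since $h\ge b$ and $A\supseteq N(B)$, the hard case is $b$ large. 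If $b$ is large, then $|A|\ge h\ge d b/8$ by \Cref{lem:isoperimetry}(b), so $g$ is at least of order $d^2 b/64$, and $t_1=\Omega(g/d)$ (or $\Omega(g)$ when $a'\le d^6$, by part (b)) gives $t_1\gtrsim d b$. Then $b\log d=O(t_1\log d/d)\le O(t_1\log d/\psi)$. If instead $b$ is small, the cost $O(b\log d+\log g)$ is trivially dominated, since $\log g=O(t_1)$. Taking the union over $v\in\cO$ adds a factor $|\cO|=2^{O(d)}$, which is absorbed because $t_1\log d/\psi=\Omega(d^{1.5}\log d)$. This yields \eqref{eq:h_small}.
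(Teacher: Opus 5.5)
\textbf{Gap: the count of $B$.} Your overall plan matches the paper's: take $(F,S)$ from \Cref{cor:psi-approx}, set $(P,Q)=(H,B)$, and pay for $B$ directly. The gap is in the step ``$B$ is determined by one root together with an $O(1)$-linked set of size $b$''. This is false, because $B$ need not be $O(1)$-linked. That $A$ is $2$-linked only connects the pieces of $N(B)$ through vertices of $A$ that may lie far from $B$. For instance, $A$ may contain $N(y_1)\cup N(y_2)$ for two far-apart $y_1,y_2\in\cO$, joined only by a long thin $2$-linked chain in $\cE$ that supports no further element of $B$. In general $B$ can have up to $b$ $2$-linked components, each needing its own root among $\Theta(d^4g)$ candidates. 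That brings you back to $2^{O(b\log(dg))}$, not $2^{\log(dg)+O(b\log d)}$.

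Your absorption step only controls $b\log d+\log g$. Against the true cost, your estimate $t_1\gtrsim db$ gives only $b\log g\lesssim t_1\log g/d$, which is not $O(t_1\log d/\psi)$ once $\log g\gg\sqrt d\log d$. (Also, ``$\log g=O(t_1)$'' is not the comparison you need, since $t_1\log d/\psi<t_1$. What does hold is $\log g=O(d)=O(t_1\log d/\psi)$.)

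\textbf{Fix: the crude bound suffices.} The bound you discarded is essentially the paper's count. The paper chooses $H\subseteq A\subseteq S$ in at most $\binom{d^3g}{h}$ ways, then $B\subseteq\{y\in\cO:N(y)\subseteq H\}$ in at most $2^h$ ways, for $2^{O(h\log g)}$ in total. You rejected $2^{O(d^3)}$ by pairing the worst case $\log g=\Theta(d)$ with the smallest possible $t_1=\Theta(d^2)$. These cannot occur together, because $t_1=\Omega(g/d)$. Split on $g$:
\begin{enumerate}
\item If $d^2/100\le g\le d^6$, then $\log g=O(\log d)$. Also \Cref{lem:isoperimetry}(b) applied to $[A]$ gives $a'\le 8g/d$. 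Hence $t_1=\Omega(g)$ and $b\le h\le a'=O(g/d)$, so the extra cost is $O(h\log d)=O(t_1\log d/d)$.
\item If $g>d^6$, then $t_1=\Omega(g/d)=\Omega(d^5)$, so $t_1\log d/\psi=\Omega(d^{4.5}\log d)$. This dominates $O(h\log g)=O(d^3)$, using $h\le d^2/100$ and $\log g\le 2d$.
\end{enumerate}
The $2^{O(d)}$ factor from the union over $v\in\cO$ is absorbed since $t_1=\Omega(d^2)$. Together these give \eqref{eq:h_small}.
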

\begin{proof} 
Given $v\in \cO$, consider $\cU(a',g,v)$ as in \Cref{cor:psi-approx}. 
Fix $A\in \bigcup_{v\in \cO}\cG(a',g,v)$. Consider any $\psi$-approximating pair $(F,S)$ of $A$ in $\cU(a',g,v)$, recalling that $|S|\leq d^3g$ (by the last part of \Cref{lem:psi-approx}). Observe that if we simply take $P=H$ and $Q=B$, then
$(F,S,H,B)$ is already a $\psi$-approximating quadruple of $A$. Since $H\subseteq A\subseteq S$ with $|H|= h$ and $|S|\leq d^3g$, for every fixed $S$, the number of choices for $H$ is at most
    \[
    \binom{d^3g}{h}= 2^{O(h\log g)}.
    \]
    After $H$ is fixed, we can obtain the set $Q':=\{y\in \cO : N(y)\subseteq H\}$ with $|Q'|\leq |H|=h$. Now we can fix $B\subseteq Q'$, and the number of choices for $B$ is at most $2^h$.
    
    Thus, for every $(F,S)\in\bigcup_{v\in \cO}\cU(a',g,v)$, the number of choices for $(F,S,H,B)$ is at most 
    \[
    2^{O(h\log g)}\cdot 2^h=2^{O(h\log g)}.
    \]
    This gives a family $\mathcal U_{\qd}=\cU_{\qd}(a',g,b,h)\subseteq 2^{\cO}\times 2^{\cE}\times 2^{\cE}\times 2^{\cO}$ with 
    \[
    |\mathcal U_{\qd}|\leq \left|\bigcup_{v\in \cO}\cU(a',g,v)\right|\cdot 2^{O(h\log g)}= 2^{O(d)}\cdot 2^{O\left(\frac{t_1\log d}{\psi}\right)}\cdot 2^{O(h\log g)}
    \]
    such that every $A\in\cH(a',g,b,h)$ has a $\psi$-approximating quadruple in $\mathcal U_{\qd}$. 
    
    Finally, to derive \eqref{eq:h_small}, we consider two cases.
    
    First, if $d^2/100\leq g\leq d^6$, then we have (from \Cref{lem:isoperimetry}(b)) that $t_1=\Omega(g)=\Omega(d^2)$ and $h\leq a'=O(g/d)$. This tells us that  
    \[
    |\cU_{\qd}|= 2^{O(d)}\cdot 2^{O\left(\frac{t_1\log d}{\psi}\right)}\cdot 2^{O(h\log d)}=2^{O\left(\frac{t_1\log d}{\psi}\right)}.
    \]

    On the other hand, if $g> d^6$, then we know from \Cref{lem:isoperimetry}(a) that $t_1=\Omega(g/d)=\Omega(d^5)$. Since $h\leq d^2/100= O(t_1/d^2)$, we get that
    \[
    |\cU_{\qd}|= 2^{O(d)}\cdot 2^{O\left(\frac{t_1\log d}{\psi}\right)}\cdot 2^{O(hd)}=2^{O\left(\frac{t_1\log d}{\psi}\right)}. \qedhere
    \]
\end{proof}

Combining \Cref{cor:psi-approx-quad} and \Cref{cor:psi-approx-quad-2} gives \Cref{prop:approx-quad-middle-layers}.

\section{Proof of \Cref{lem:reconstruction}}\label{app:reconstruction}\label{app:3}

\begin{prop*}[\Cref{lem:reconstruction}]
Let $\Gamma$ be a bipartite $d$-regular graph with vertex bipartition $V(\Gamma)=\cE\cup \cO$.
Assume $\psi=\sqrt d$, and assume in addition that
\[
t_1=\Omega(g/d),
\qquad
t_2=\Omega(h/d).
\]
Fix $b\le h\le a'\le g$. Then for every quadruple $(F,S,P,Q)$ satisfying conditions (2), (3), (5), and (6) of \Cref{def:psi-approx-quad}, the number of $2$-linked sets $A\subseteq \cE$ with
\[
|[A]|=a',
\qquad
|G|=g,
\qquad
|B|=b,
\qquad
|H|=h,
\]
for which $(F,S,P,Q)$ also satisfies conditions (1) and (4) of \Cref{def:psi-approx-quad} is at most
\[
2^{\,g-b- \Omega(t_1/\log d)- \Omega(t_2/\log d)}.
\]
\end{prop*}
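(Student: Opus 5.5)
We follow Galvin's reconstruction argument, done separately for the two halves of the quadruple. Fix $(F,S,P,Q)$ satisfying (2), (3), (5), (6), and count sets $A$ for which (1) and (4) also hold, i.e.\ $F\subseteq G$, $S\supseteq[A]$, $P\subseteq H$, $Q\supseteq B$. The key observation is that $A$ is determined by the pair $(B,G)$. Indeed, $[A]=\{x\in\cE: N(x)\subseteq G\}$. Moreover, $A$ is squeezed between $H=N(B)$ and $[A]$, but we will not try to recover $A$ from the data exactly. Instead we encode $A$ through $(B, G)$ together with $A\setminus H\subseteq [A]\setminus H$. This costs us, so we organize the count as follows. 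Since $A\subseteq[A]$ and $G=N(A)$, it suffices to specify $G$ and then the set $A$ inside $[A]$. We will bound the number of choices of $G$ given $(F,S)$ by roughly $2^{g-a'-\Omega(t_1/\log d)}$, and the number of choices of $A$ given $[A]$, $P$, $Q$ by roughly $2^{a'-b-\Omega(t_2/\log d)}$. Multiplying gives the claimed $2^{g-b-\Omega(t_1/\log d)-\Omega(t_2/\log d)}$.

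\emph{Step 1: reconstructing $G$.} Since $F\subseteq G\subseteq N(S)$, it suffices to choose $G\setminus F\subseteq N(S)\setminus F$. Following Galvin, we specify $G$ via a set $X\subseteq S$ with $N(X)\cup F=G$, or better via the complement: $[A]\subseteq S$ and $G=F\cup N([A])$. By (2), each $x\in S$ has at most $\psi$ neighbours outside $F$. Double counting edges between $S\setminus[A]$ and $\cO\setminus G$ and using (3) bounds $|S\setminus[A]|$ by $O(t_1 d/(d-\psi))$ and $|N(S)\setminus F|$ by $O(t_1)$-ish quantities. We then encode $G$ as follows. Pick $[A]$ by first choosing which vertices of $S$ are outside $[A]$; these form a small set, costing $\binom{|S|}{\le O(t_1)}$, which is too much without care. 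So, as in Galvin, use the weighted count: $\sum_{G}$ over choices is bounded by encoding the $\cO\setminus F$ side. Each $y\in G\setminus F$ is determined once we know one neighbour of it in $[A]$, and each $x\in S$ has $\le\psi$ such neighbours. Choosing $G\setminus F$ as a union of at most $|G\setminus F|/\psi'$ neighbourhoods-within-$N(S)\setminus F$ (a greedy cover via \Cref{lem:cover}) costs $2^{O(|G\setminus F|\log d/\sqrt d)}$. We then bound $|G\setminus F|$ with the edge count $e(G\setminus F, \cE\setminus[A])\ge$ something from (3). The net saving against the trivial $2^{g-a'}$ comes from comparing $|S|$ with $a'$: the $t_1$ vertices of $G$ not ``paid for'' by $[A]$.

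\emph{Step 2: reconstructing $A$ inside $[A]$.} We have $H\subseteq A\subseteq[A]$ and $P\subseteq H\subseteq N(Q)$. Choose $B\subseteq Q$, which determines $H$; then choose $A\setminus H$ freely in $[A]\setminus H$, giving $2^{a'-h}$. The count of $B$ is done exactly as in Step 1 with the roles of $\cE,\cO$ swapped, using (5), (6). This yields $2^{h-b-\Omega(t_2/\log d)}$ choices, since the mirror-image quantity is $t_2=h-b$.

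The main obstacle is extracting the $\Omega(t/\log d)$ saving in Step 1 rather than just an $o(t)$ error. We would try Galvin's dichotomy. If $|G\setminus F|$ is much smaller than $t_1$, then the trivial encoding already saves. Otherwise, many vertices of $G\setminus F$ have most of their neighbours in $\cE\setminus S$, by (3). Then a random or greedy choice of one $[A]$-neighbour per vertex compresses by a factor $\psi$, and $\psi=\sqrt d$ makes the $\log d/\sqrt d$ overhead negligible against $t_1/\log d$. The hypotheses $t_1=\Omega(g/d)$ and $t_2=\Omega(h/d)$ are exactly what absorb the $O(g\log d/d)$-type error terms in this step.
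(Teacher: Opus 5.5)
There is a genuine gap, and it is in the step you flag as the main obstacle. Your plan factors the count as (number of $G$ given $(F,S)$) $\times$ (number of $A$ given $[A],P,Q$), so it needs the first factor to be $2^{t_1-\Omega(t_1/\log d)}$. The compression you propose for this goes the wrong way. Conditions (2) and (3) are \emph{upper} bounds on cross-degrees: each $x\in S$ has at most $\psi$ neighbours outside $F$, and each $y\in\cO\setminus F$ has at most $\psi$ neighbours in $S$. So a vertex of $G\setminus F$ may have only one neighbour in $[A]$, and covering $G\setminus F$ by sets $N(x)\setminus F$ with $x\in[A]$ may need $|G\setminus F|$ of them. \Cref{lem:cover} gives nothing when the side being covered has minimum degree $1$, and choosing one $[A]$-neighbour per vertex of $G\setminus F$ adds information rather than compressing it. Without compression, all you know is $|G\setminus F|\le t_1d/(d-\psi)$ and $G\setminus F\subseteq N(S)\setminus F$, a set of size $O(\sqrt d\,t_1)$. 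The resulting naive count is $2^{\Theta(t_1\log d)}$, far above $2^{t_1}$. Step 2 has the same defect: when $|Q|-b$ is of order $t_2$, picking $B\subseteq Q$ costs about $\binom{h}{t_2}$, which is $2^{\Theta(t_2\log d)}$ when $h/t_2$ is of order $d$. So ``choose $B$, then $A\setminus H$'' cannot give $2^{a'-b-\Omega(t_2/\log d)}$.

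The missing idea is a dichotomy on the sizes of the given containers $S$ and $Q$, where the expensive reconstruction is simply skipped in the bad regime.
\begin{itemize}
\item If $|S|<g-t_1/(10\log d)$, do not reconstruct $G$ at all. Since $A\subseteq[A]\subseteq S$, $A$ is a subset of $S$ and costs at most $2^{|S|}$; the $t_1$-saving comes from $S$ itself being small.
\item If $|S|\ge g-t_1/(10\log d)$, then \Cref{lem:S-bound} gives $|F|\ge |S|-O(\psi t_1/d)\ge g-t_1/(5\log d)$, so $|G\setminus F|\le t_1/(5\log d)$. Choosing $G\setminus F\subseteq N(S)$ then costs about $2^{\frac{t_1}{5\log d}\log(O(gd\log d/t_1))}\le 2^{t_1/2}$; this is where $t_1=\Omega(g/d)$ is really used. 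After that, $A\subseteq[A]$ costs $2^{a'}=2^{g-t_1}$.
\item Symmetrically, if $|Q|<b+t_2/(10\log d)$, choosing $B\subseteq Q$ is cheap and $H=N(B)$ is a forced part of $A$.
\item Otherwise, do not choose $B$ at all. Use $P\subseteq H\subseteq A$ as the forced part instead, since $|P|\ge|Q|-O(\psi t_2/d)\ge b+t_2/(20\log d)$.
\end{itemize}
Choosing the rest of $A$ freely inside $S$ or $[A]$ in each of the four resulting cases gives $2^{g-b-\Omega(t_1/\log d)-\Omega(t_2/\log d)}$. Note that this argument does not factor through a bound on the number of possible $G$'s: in the small-$S$ case the paper counts $A$ directly inside $S$.
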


We start with the following bound on the sizes of sets in a $\psi$-approximating quadruple.

\begin{lem}\label{lem:S-bound}
    If $(F,S,P,Q)$ is a $\psi$-approximating quadruple of $A$, then we have
    \begin{align*}
        &|S|\leq |F|+O(\psi t_1 /(d-\psi)),\\
        &|Q|\leq |P|+O(\psi t_2 /(d-\psi)).
    \end{align*}
    In particular we have $|S|=O(g)$ and $|Q|=O(h)$.
\end{lem}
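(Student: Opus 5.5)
Count the edges between $S\setminus F$-type sets and $\cE\setminus[A]$; it suffices to treat the first inequality, since the second follows verbatim with $(P,Q,H,B,t_2)$ in place of $(F,S,G,[A],t_1)$.

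The idea is to split $S = (S\cap N(F)^{c}) \cup \dots$; more concretely, write $S=[A]\cup(S\setminus[A])$. Then
\[
|S|\le |[A]|+|S\setminus [A]| = a' + |S\setminus[A]| \le |F| + (g-|F|) + |S\setminus[A]| - t_1 .
\]
So it suffices to bound $|G\setminus F|$ and $|S\setminus[A]|$ by $O(\psi t_1/(d-\psi))$ plus something cancelling the $-t_1$. Rather than this, I would bound $|S|-|F|$ directly by double counting the edges $E(S,F)$.

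By condition (2), $e(S,F)\ge (d-\psi)|S|$. On the other side, every $y\in F\subseteq G$ has $d_S(y)\le d$, so $e(S,F)\le d|F|$. This only gives $|S|\le \frac{d}{d-\psi}|F| = |F|+\frac{\psi}{d-\psi}|F|$. This is too weak by a factor $g/t_1$, so refine: split $F$ into $F_1=\{y\in F: d_{\cE\setminus[A]}(y)=0\}$ and $F_2=F\setminus F_1$. Vertices in $F_1$ have all $d$ neighbours in $[A]$, so their edges into $S$ land in $[A]$. Hence
\[
(d-\psi)|S|\le e(S,F)\le e([A],F)+e(S\setminus[A],F_2)\le e([A],F)+d|F_2| .
\]
Here $|F_2|\le e(G,\cE\setminus[A])=t_1d$ is too crude; instead use $e(S\setminus[A],F)\le e(\cE\setminus[A],G)=dg-da'=dt_1$. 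Also $e([A],F)\le d\min(|F|,a')$. So $(d-\psi)|S|\le d|F|+dt_1$, which is still off. Thus the cleaner route is to separate $[A]$ from $S\setminus[A]$:
\[
(d-\psi)\,|S\setminus[A]| \le e(S\setminus[A],F)\le dt_1 ,
\]
giving $|S\setminus[A]|\le dt_1/(d-\psi)$. For $[A]$ itself, $d a' = e([A],G) = e([A],F)+e([A],G\setminus F)$, and $e([A],F)\le d|F|-e(F,\cE\setminus[A])$. Finally, condition (3) gives every $y\in G\setminus F$ at least $d-\psi$ neighbours outside $S\supseteq[A]$, so $|G\setminus F|(d-\psi)\le dt_1$ and $e([A],G\setminus F)\le \psi|G\setminus F|\le \psi dt_1/(d-\psi)$.

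Assembling these: $a'\le |F| + \psi t_1/(d-\psi)$, and the main obstacle is the term $|S\setminus[A]|$, where the crude bound $dt_1/(d-\psi)$ must be sharpened to $O(\psi t_1/(d-\psi))$. To handle it, I would use that each $x\in S\setminus[A]$ has at least $d-\psi$ neighbours in $F\subseteq G$, hence at most $\psi$ neighbours outside $G$, but, being outside $[A]$, at least one neighbour outside $G$. Hence $|S\setminus[A]|\le e(S\setminus[A],\cO\setminus G)$. Each $y\in\cO\setminus G$ lies outside $F$, so by condition (3) $d_S(y)\le\psi$; and $y$ must be adjacent to $\cE\setminus[A]$ near $G$. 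Charging each such edge to an edge of $E(G,\cE\setminus[A])$ via the $x$'s $\ge d-\psi$ edges into $G$ yields $|S\setminus[A]|\cdot(d-\psi)\le e(S\setminus[A],G)\le dt_1$, which is again the weak bound. So if the sharpened version fails, I would accept $|S|\le |F|+O(\psi t_1/(d-\psi))+O(dt_1/(d-\psi))$, which still suffices for the ``in particular'' $|S|=O(g)$ since $t_1\le g$. The main effort is squeezing the $\psi$ factor for $S\setminus[A]$, likely via condition (3) summed over $\cO\setminus F$.
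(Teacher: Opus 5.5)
There is a genuine gap: you never prove the main inequality. You end with $|S|\le |F|+O(dt_1/(d-\psi))$, which is weaker by a factor $d/\psi$. That bound does give the ``in particular'' claim $|S|=O(g)$, but not the lemma. The missing $\psi$ factor is exactly what \Cref{lem:reconstruction} uses: in the slack-$S$ case it needs $|F|\ge |S|-O(\psi t_1/d)$ to get $|G\setminus F|\le t_1/(5\log d)$, and in the slack-$Q$ case it needs $|P|\ge b+t_2/(20\log d)$.

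Moreover, the obstacle as you framed it cannot be overcome, because $|S\setminus[A]|$ by itself is \emph{not} $O(\psi t_1/(d-\psi))$ in general. For a counterexample, take two copies of $K_{d,d}$ minus the matching $\{x_iy_i\}_{i\ge 1}$, cross-linked by the edges $x_i^{(1)}y_i^{(2)}$ and $x_i^{(2)}y_i^{(1)}$, and let $A=\{x_0^{(1)}\}$. Then $F=G=N(x_0^{(1)})$ and $S=\{x_0^{(1)},\dots,x_{d-1}^{(1)}\}$ satisfy (1)--(3). Here $|S\setminus[A]|=d-1=t_1$, while $|S|-|F|=0$. Only the difference $|S|-|F|$ is small, so bounding $a'$ and $|S\setminus[A]|$ separately is a dead end.

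The fix is a term you already had and then threw away. You have the identity $e([A],F)=d|F|-e(F,\cE\setminus[A])$, but you dropped the subtracted term. By (2) and $F\subseteq G$, that term satisfies
\[
e(F,\cE\setminus[A])\ge e(F,S\setminus[A])\ge (d-\psi)|S\setminus[A]|.
\]
Keeping it in your bound for $da'$ gives
\[
d|S| = da'+d|S\setminus[A]| \le d|F|+\psi|G\setminus F|+\psi|S\setminus[A]|.
\]
This is exactly what the paper obtains by double counting $e(S,G)$. On one side, $e(S,G)$ is at most $d|F|+\psi|G\setminus F|$ by (3). On the other, it is at least $d|[A]|+(d-\psi)|S\setminus[A]|$ by (2).

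Now your ``weak'' bounds $|G\setminus F|,\,|S\setminus[A]|\le dt_1/(d-\psi)$ are sufficient, since each is multiplied by $\psi/d$. This yields $|S|\le |F|+2\psi t_1/(d-\psi)$. The paper gets constant $1$ by noting that the two edge families (from $G\setminus F$ into $\cE\setminus S$, and from $S\setminus[A]$ into $F$) lie disjointly in $E(G,\cE\setminus[A])$, which has size $dt_1$. The $(P,Q,H,B,t_2)$ analogue then goes through as you said.
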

\begin{proof}
Observe that we have $e(S,G)\leq d|F|+\psi|G\setminus F|$ and $e(S,G)\geq d|[A]|+(d-\psi)|S\setminus [A]|=d|S|-\psi|S \setminus [A]|$. These two give
\[
|S|\leq |F|+\frac{\psi}{d}(|G\setminus F|+|S\setminus [A]|).
\]
Since each vertex of $(G\setminus F)\cup (S\setminus [A])$ contributes at least $d-\psi$ edges to $e(G, \cE\setminus [A])$, a set of size $gd-a'd=t_1d$, we get the first bound. The second inequality follows analogously.
\end{proof}

\begin{proof}[Proof of \Cref{lem:reconstruction}]
    Recall from the assumption that $t_1=g-a'=\Omega(g/d)$ and $t_2=h-b=\Omega(h/d)$. Given $(F,S,P,Q)$ and $b\leq h\leq a'\leq g$, we say that $Q$ is tight if $|Q|<b+\frac{t_2}{10\log d}$ and slack otherwise; we say that $S$ is tight if $|S|<g- \frac{t_1}{10\log d}$ and slack otherwise.

    We now define the following procedure to specify $A$. 
    \begin{itemize}
        \item We first pick some $D\subseteq A$ as follows. If $Q$ is tight, we pick $B\subseteq Q$ with $|B|=b$ and take $D= N(B)\subseteq A$; if $Q$ is slack we take $D=P$.
        \item If $S$ is tight, we choose $A\setminus D \subseteq S\setminus D$.
        \item If $S$ is slack, we first pick $G$ by picking $G\setminus F\subseteq N(S)\setminus F$. In this case $|S|\geq g-\frac{t_1}{10\log d}$. Then we know from \Cref{lem:S-bound} that $|F|\geq |S|-O(\psi t_1/d)\geq g-\frac{t_1}{5\log d}$. Thus $|G\setminus F|\leq \frac{t_1}{5\log d}$.
        
        Observe that $G$ determines $[A]$. We then choose $A\setminus D\subseteq [A]\setminus D$.
    \end{itemize}

    We have the following observations.
    \begin{itemize}
        \item If $Q$ is tight, then the number of choices of $D$ is at most 
        \[
        \binom{O(h)}{\leq \frac{t_2}{10\log d}}\leq 2^{\frac{t_2}{10\log d}\cdot \log(10h\log d/t_2)}.
        \]
        In this case $|D|=h$.
        \item If $Q$ is slack there is 1 choice for $D$. In this case $|D|=|P|\geq |Q|-O(\psi t_2/(d-\psi))\geq b+ \frac{t_2}{10\log d}-O(\psi t_2/(d-\psi))\geq b+ \frac{t_2}{20\log d}$.
        \item If $S$ is slack, then since $|N(S)|\leq d|S|=O(gd)$, the number of possible choices for $G\setminus F$ is at most
        \[
        \binom{O(gd)}{\leq \frac{t_1}{5\log d}}\leq 2^{ \frac{t_1}{5\log d}\cdot \log(5gd\log d/t_1)}.
        \]
    \end{itemize}

    Finally we bound the number of choices in each of the four cases.
    \begin{enumerate}
        \item Suppose both $S$ and $Q$ are tight. In this case $|S\setminus D|\leq g- \frac{t_1}{10\log d}-h$. Thus the number of choices of $A$ is at most
        \[
        2^{\frac{t_2}{10\log d}\cdot \log(10h\log d/t_2)}\cdot 2^{g- \frac{t_1}{10\log d}-h}\leq 2^{\frac{t_2}{10\log d}\cdot \log(10h\log d/t_2)}\cdot 2^{g- \frac{t_1}{10\log d}-b-t_2}\leq 2^{g-b-\frac{t_1}{10\log d}-t_2/2}.
        \]
        \item Suppose $S$ is tight and $Q$ is slack. In this case $|S\setminus D|\leq g-\frac{t_1}{10\log d}-b-\frac{t_2}{20\log d}$. So the number of choices of $A$ is at most $2^{g-\frac{t_1}{10\log d}-b-\frac{t_2}{20\log d}}$.
        \item Suppose $Q$ is tight and $S$ is slack. In this case $|[A]\setminus D|=a'-h$. Then the number of choices of $A$ is 
        \begin{align*}
            &2^{\frac{t_2}{10\log d}\cdot \log(10h\log d/t_2)}\cdot 2^{\frac{t_1}{5\log d}\cdot \log(5gd\log d/t_1)}\cdot 2^{a'-h}\\
            &< 2^{\frac{t_2}{10\log d}\cdot \log(10h\log d/t_2)}\cdot 2^{\frac{t_1}{5\log d}\cdot \log(5gd\log d/t_1)}\cdot 2^{g-t_1-b-t_2}\\
            &\leq 2^{g-b-t_1/2-t_2/2}.
        \end{align*}
        \item Suppose both $Q$ and $S$ are slack. In this case $|[A]\setminus D|\leq a'-b-\frac{t_2}{20\log d}$. Then the number of choices of $A$ is 
       \begin{align*}
           & 2^{ \frac{t_1}{5\log d}\cdot \log(5gd\log d/t_1)}\cdot 2^{a'-b-\frac{t_2}{20\log d}} = 2^{ \frac{t_1}{5\log d}\cdot \log(5gd\log d/t_1)}\cdot 2^{g-t_1-b-\frac{t_2}{20\log d}}\\
           &\leq 2^{g-b-t_1/2-\frac{t_2}{20\log d}}.
       \end{align*}
    \end{enumerate}
    Thus, we conclude that the number of choices of $A$ is always $2^{g-b-\Omega(t_1/\log d)-\Omega(t_2/\log d)}$.
\end{proof}

\end{document}